\newcommand{\eps}{\epsilon}
\newcommand{\scr}[1]{\mathcal{#1}}
\def\timenow{\@tempcnta\time
  \@tempcntb\@tempcnta
  \divide\@tempcntb60
  \ifnum10>\@tempcntb0\fi\number\@tempcntb
  \multiply\@tempcntb60
  \advance\@tempcnta-\@tempcntb:\ifnum10>\@tempcnta0\fi\number\@tempcnta}
\author{L. Addario-Berry\thanks{
    Email: \tt louigi@gmail.com
    \rm .}, N. Broutin\thanks{
    Email: \tt nbrout@cs.mcgill.ca
    \rm .}, C. Goldschmidt\thanks{
    Email: \tt goldschm@stats.ox.ac.uk
    \rm .}
}
\newcommand{\Probc}[1]{\ensuremath{\mathbb{P} (#1)}}
\newcommand{\Cprobc}[2]{\mathbb{P}(#1 \; |  \; #2 )}
\newcommand{\Ec}[1]{\ensuremath{\mathbb{E} [#1]}}
\newcommand{\diamc}[1]{\mathrm{diam}(#1)}
\newcommand{\CExpc}[2]{\mathbb{E}[ #1 \;| \; #2 \;]}
\newcommand{\oDFS}{{{\bf oDFS}}}
\newcommand{\so}{\mathcal{O}}
\newenvironment{steplist}
{ \begin{list}%
	{$\bullet$}%
	{\setlength{\labelwidth}{100pt}%
	 \setlength{\leftmargin}{80pt}%
	 \setlength{\itemsep}{\parsep}}}%
{ \end{list} }
\newcommand{\gh}{\mathrm{GH}}
\newcommand{\diam}[1]{\mathrm{diam}\!\left(#1\right)}
\newcommand{\dgh}{d_{\gh}}
\newcommand{\eqdist}{\ensuremath{\stackrel{d}{=}}}
\newcommand{\ExpC}[2]{\mathbb{E}\event{#1 \left| \; #2 \right. \;}}
\renewcommand{\E}[1]{\ensuremath{\mathbb{E} \left[#1 \right]}}
\newcommand{\Prob}[1]{\ensuremath{\mathbb{P} \left(#1 \right)}}
\renewcommand{\I}[1]{\ensuremath{\mathbbm{1}_{ \{ #1 \} }}}
\renewcommand{\fl}[1]{\ensuremath{\lfloor #1 \rfloor}}
\renewcommand{\ce}[1]{\ensuremath{\lceil #1 \rceil}}
\renewcommand{\subset}{\subseteq}
\newcommand{\convdist}{\ensuremath{\stackrel{d}{\rightarrow}}}
\newcommand{\convprob}{\ensuremath{\stackrel{p}{\rightarrow}}}
\newcommand{\equidist}{\ensuremath{\stackrel{d}{=}}}
\begin{document}

\title{The continuum limit of critical random graphs}
\author{L. Addario-Berry \and N. Broutin \and C. Goldschmidt}
\maketitle

\begin{abstract}We consider the Erd\H{o}s--R\'enyi random graph $G(n,p)$ inside the critical window, that is when $p=1/n+\lambda n^{-4/3}$, for some fixed $\lambda\in \R$. Then, as a metric space with the graph distance rescaled by $n^{-1/3}$, the sequence of connected components $G(n,p)$ converges towards a sequence of continuous compact metric spaces. The result relies on a bijection between graphs and certain marked random walks, and the theory of continuum random trees. Our result gives access to the answers to a great many questions about distances in critical random graphs. In particular, we deduce that the diameter of $G(n,p)$ rescaled by $n^{-1/3}$ converges in distribution to an absolutely continuous random variable with finite mean.   

\vspace{0.2cm}
\noindent {\bf Keywords:} Random graphs, Gromov-Hausdorff distance, scaling limits, continuum random tree, diameter. \\
{\bf 2000 Mathematics subject classification:} 05C80, 60C05.
\end{abstract}

\section{Introduction}\label{sec:intro}

\subsection*{Random graphs and the phase transition} 

Since its introduction by \citet{erdos60evolution}, the model $G(n,p)$ of random graphs has received an enormous amount of attention \cite{janson00random, Bollobas2001}. In this model, a graph on $n$ labeled vertices $\{1,2,\dots, n\}$ is chosen randomly by joining any two vertices by an edge with probability $p$, independently for different pairs of vertices. This model exhibits a radical change in structure (or \emph{phase transition}) for large $n$ when $p=p(n)\sim 1/n$.  For $p\sim c/n$ with $c<1$, the largest connected component has size (number of vertices) $O(\log n)$.  On the other hand, when $c > 1$, there is a connected component containing a positive proportion of the vertices (the \emph{giant component}). The cases $c<1$ and $c>1$ are called \emph{subcritical} and \emph{supercritical} respectively. This phase transition was discovered by Erd\H{o}s and R\'enyi in their seminal paper \cite{erdos60evolution}; indeed, they further observed that in the \emph{critical} case, when $p= 1/n$, the largest components of $G(n,p)$ have sizes of order $n^{2/3}$. For this reason, the phase transition in random graphs is sometimes dubbed the \emph{double jump}.

Understanding the critical random graph (when $p=p(n)\sim 1/n$) requires a different and finer scaling: the natural parameterization turns out to be of the form $p=p(n)=1/n+\lambda n^{-4/3}$, for $\lambda = o(n^{1/3})$ \cite{bollobas84evolution,Luczak1990, LuPiWi1994}. In this paper, we will restrict our attention to $\lambda \in \R$; this parameter range is then usually called the \emph{critical window}.  One of the most significant results about random graphs in the critical regime was proved by \citet{aldous97brownian}.  He observed that one could encode various aspects of the structure of the random graph (specifically, the sizes and surpluses of the components) using stochastic processes.  His insight was that standard limit theory for such processes could then be used to get at the relevant limiting quantities, which could, moreover, be analyzed using powerful stochastic-process tools.  Fix $\lambda \in \R$, set $p=1/n+\lambda n^{-4/3}$ and write $Z_i^n$ and $S_i^n$ for the size and surplus (that is, the number of edges which would need to be removed in order to obtain a tree) of $\mathcal{C}_i^n$, the $i$-th largest component of $G(n,p)$.  Set $\mathbf Z^n=(Z_1^n, Z_2^n, \dots)$ and $\mathbf S^n=(S_1^n, S_2^n, \dots)$. 
\begin{thm}[\citet{aldous97brownian}]  \label{thm:aldousthm}
As $n\to\infty$.
\[
(n^{-2/3}\mathbf Z^n, \mathbf S^n)\convdist (\mathbf Z, \mathbf S).
\]
\end{thm}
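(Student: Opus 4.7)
The plan is to follow the exploration-walk approach pioneered by Aldous. First, I would run a breadth-first search of $G(n,p)$ with a uniformly random vertex ordering: start the queue with the smallest unexplored vertex, repeatedly take the first vertex $v_i$ off the queue and reveal its edges to the yet-unqueued vertices, appending the new neighbors to the end of the queue. Let $\eta_i^n$ be the number of new neighbors of $v_i$ thus discovered, and set $W^n(0)=0$ and $W^n(i)=W^n(i-1)+\eta_i^n-1$. Then the excursions of $W^n$ strictly above its running minimum are in bijection with the connected components of $G(n,p)$, ordered by first-discovery time, and each such excursion has length equal to the size of its component. To track the surplus, at step $i$ I would also reveal all edges from $v_i$ to the vertices currently in the queue; the surplus $S^n$ of a component equals the total number of these back-edges contributed across its excursion.

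Second, I would establish a functional limit theorem for the rescaled walk $\widehat{W}^n(t):=n^{-1/3}W^n(\lfloor n^{2/3}t\rfloor)$. Conditionally on the first $i-1$ steps, $\eta_i^n$ is binomial with parameters (number of unqueued vertices) and $p=1/n+\lambda n^{-4/3}$; with $i\approx n^{2/3}t$ the conditional mean of $\eta_i^n-1$ is approximately $(\lambda - t)n^{-1/3}$. Summing over $i\le n^{2/3}t$ produces a deterministic drift of $\lambda t - t^2/2$ after the spatial rescaling, while the centered increments aggregate via a martingale central limit theorem into a standard Brownian motion $B$. Hence $\widehat{W}^n\convdist W$ in the Skorokhod topology on compact intervals, where $W(t)=B(t)+\lambda t - t^2/2$.

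Third---and this is the main obstacle---I would transfer convergence of the paths to convergence of the \emph{ordered sequence} of excursion lengths. The map from a c\`adl\`ag path to its ranked excursion lengths above the running minimum is not continuous in the Skorokhod topology: a small perturbation can merge or split nearby excursions, or produce extra long excursions far out in time. To handle this I would (i) use the quadratic downward drift of $W$ to confine the macroscopic excursions to a time interval $[0,T]$ with probability arbitrarily close to one as $T\to\infty$, and give an analogous estimate for $\widehat{W}^n$ showing that no non-negligible excursion can appear after time $T$ with positive probability; (ii) prove tightness of the sequence of ranked excursion-lengths in $\ell^2$ via a second-moment estimate on the total mass of short excursions; (iii) observe that on a fixed interval the finitely many macroscopic excursions are a continuous functional of the path, and so converge jointly. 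Together these yield $n^{-2/3}\mathbf{Z}^n\convdist\mathbf{Z}$, where $\mathbf{Z}$ is the ranked sequence of lengths of excursions of $W$.

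Finally, I would handle the surpluses by noting that, conditionally on $W^n$, the back-edges revealed at step $i$ form independent binomial contributions with conditional mean $p\, W^n(i)+O(n^{-1})$. Over an excursion of length approximately $n^{2/3}\ell$ and height of order $n^{1/3}$, these aggregate to a count that is approximately Poisson with parameter $\int_0^\ell W(s)\,ds$, with Poissonization justified by the fact that $p=O(n^{-1})$ and the count is of constant order. Independence across excursions follows from the spatial separation given by the running-minimum structure. Combining this with the joint convergence established in the previous step yields $(n^{-2/3}\mathbf{Z}^n,\mathbf{S}^n)\convdist(\mathbf{Z},\mathbf{S})$, where $\mathbf{S}$ is, conditionally on $\mathbf{Z}$ and the excursion paths of $W$, an independent sequence of Poisson variables with means equal to the areas under the excursions.
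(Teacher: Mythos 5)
Your proposal is correct and is essentially the argument of the cited source: the paper does not reprove this theorem but invokes \citet{aldous97brownian}, whose proof proceeds exactly as you describe --- the breadth-first walk, its convergence to $W^\lambda(t)=W(t)+\lambda t-t^2/2$, the passage from path convergence to $\ell^2_{\searrow}$ convergence of ranked excursion lengths, and the Poissonization of the back-edges with mean the area under each excursion (this is the same machinery the paper itself summarizes in Section~\ref{sec:random_graph_limit}). You have correctly identified the genuinely delicate step (the discontinuity of the ranked-excursion-length map and the $\ell^2$ tightness), and your outline for handling it matches Aldous's treatment.
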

Here, the convergence of the first co-ordinate takes place in $\ell^2_{\searrow}$, the set of infinite sequences $(x_1,x_2,\dots)$ with $x_1\ge x_2\ge \dots \ge 0$ and $\sum_{i\ge 1} x_i^2<\infty$. (See also \cite{LuPiWi1994,JaSp2007}.) The limit $(\mathbf Z, \mathbf S)$ is described in terms of a Brownian motion with parabolic drift, $(W^{\lambda}(t), t \ge 0)$, where
\[
W^\lambda(t):=W(t)+t\lambda-\frac{t^2}2,
\]
and $(W(t), t\ge 0)$ is a standard Brownian motion. The limit $\mathbf Z$ has the distribution of the ordered sequence of lengths of excursions of the reflected process $W^\lambda(t)-\min_{0\le s\le t}W^\lambda (s)$ above 0, while $\mathbf S$ is the sequence of numbers of points of a Poisson point process with rate one in $\R^+\times \R^+$ lying under the corresponding excursions. Aldous's limiting picture has since been extended to ``immigration'' models of random graphs \cite{ap00}, hypergraphs \cite{cg05}, and most recently to random regular graphs with fixed degree \cite{nachperreg2007}. 

The purpose of this paper is to give a precise description of the limit of the sequence of \emph{components} $\mathbf{\mathcal{C}}^{n} = (\mathcal{C}_1^{n}, \mathcal{C}_2^{n}, \ldots)$.  Here, we view $\mathcal{C}_1^{n}, \mathcal{C}_2^{n}, \ldots$ as metric spaces $M_1^{n}, M_2^{n}, \ldots$, where the metric is the usual graph distance, which we rescale by $n^{-1/3}$.  The limit object is then a sequence of compact metric spaces $\mathbf{M}=(M_1,M_2, \dots)$.  
The appropriate topology for our convergence result is that generated by the Gromov--Hausdorff distance on the set of compact metric spaces, which we now define.  Firstly, for a metric space $(M,\delta)$, write $d_H$ for the Hausdorff distance between two compact subsets $K, K'$ of $M$, that is
\[
d_H(K,K') = \inf\{\epsilon > 0: K \subset F_{\epsilon}(K') \text{ and } K' \subset F_{\epsilon}(K)\},
\]
where $F_{\epsilon}(K) := \{x \in M: \delta(x,K) \leq \epsilon\}$ is the $\epsilon$-fattening of the set $K$.  Suppose now that $X$ and $X'$ are two compact metric spaces, each ``rooted" at a distinguished point, called $\rho$ and $\rho'$ respectively.  Then we define the \emph{Gromov--Hausdorff} distance between $X$ and $X'$ to be
\[
d_{GH}(X,X') = \inf\{d_H(\phi(X), \phi'(X')) \vee \delta(\phi(\rho), \phi(\rho'))\}
\]
where the infimum is taken over all choices of metric space $(M,\delta)$ and all isometric embeddings $\phi: X \to M$ and $\phi':X' \to M$. (Throughout the paper, when viewing a connected labeled graph $G$ as a metric space, we will consider G to be rooted at its vertex of smallest label.) The main result of the paper is the following theorem. 

\begin{thm}\label{thm:main}
As $n \to \infty$,
\[
(n^{-2/3} \mathbf{Z}^{n}, n^{-1/3} \mathbf{M}^{n}) \convdist (\mathbf{Z}, \mathbf{M}),
\]
for an appropriate limiting sequence of metric spaces $\mathbf{M} = (M_1, M_2, \ldots)$. 
Convergence in the second co-ordinate here is in the metric specified by
\begin{equation}\label{4metric}
d(\mathbf{A}, \mathbf{B}) = \left( \sum_{i=1}^{\infty} d_{\mathrm{GH}}(A_i, B_i)^4 \right)^{1/4}
\end{equation}
for any sequences of metric spaces $\mathbf{A} = (A_1, A_2, \ldots)$ and $\mathbf{B} = (B_1, B_2, \ldots)$.
\end{thm}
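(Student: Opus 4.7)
My plan is to encode each connected component of $G(n,p)$ as a rooted tree plus a small number of \emph{surplus} edges, take scaling limits of these two pieces, and then show that the Gromov--Hausdorff limit of the resulting glued object is as claimed. Concretely, I would run a depth-first-style exploration of $G(n,p)$ starting from the vertex of smallest label and record a Lukasiewicz-type walk $L^n$ whose excursions above past minima correspond bijectively to the connected components and encode their depth-first spanning trees. The edges of $G(n,p)$ that are not used by the exploration become the surplus edges, and their number in the excursion coding $\mathcal{C}_i^n$ is $S_i^n$.

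\textbf{Key steps.} First, I would strengthen Aldous's invariance principle (Theorem~\ref{thm:aldousthm}) to the joint convergence of $(n^{-1/3} L^n(\lfloor n^{2/3} t\rfloor))_{t\ge 0}$ to the Brownian motion with parabolic drift $W^\lambda$, together with the rescaled positions of the surplus edges; conditional on the walk, these positions are essentially independent uniform samples from the corresponding excursion intervals. Second, I would invoke the Aldous--Le Gall theory of real trees coded by continuous excursions to deduce that the rescaled depth-first spanning tree of the $i$-th largest component converges, in the Gromov--Hausdorff sense, to the real tree $T_i$ coded by the $i$-th largest excursion $e_i$ of the reflected process $W^\lambda-\min W^\lambda$. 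The candidate limit $M_i$ is then obtained from $T_i$ by identifying $S_i$ pairs of points, with locations given by the scaling limit of the surplus endpoints.

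\textbf{Gluing continuity (main obstacle).} The hard step is to prove that the quotient operation which identifies a finite set of pairs of points is continuous in the Gromov--Hausdorff topology enhanced by those marked pairs: if $(T^n, \{(x_j^n,y_j^n)\}_{j=1}^{s}) \to (T, \{(x_j,y_j)\}_{j=1}^{s})$ with marks, then the corresponding quotient metric spaces must also converge. This forces one to control shortest paths that may traverse several of the glued pairs, hence to establish uniform diameter bounds and adequate information about the local geometry of the limit CRTs. A Skorohod representation then transports the convergence of the encoding walk into almost sure convergence of the associated metric spaces.

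\textbf{From one component to the whole sequence.} Finally, to upgrade componentwise convergence into convergence in the metric~(\ref{4metric}), I would use a truncation argument: since $\diam{\mathcal{C}_i^n} \le Z_i^n$ and is typically of order $(Z_i^n)^{1/2}$, the $\ell^4$ tail $\sum_{i>I} (n^{-1/3}\diam{\mathcal{C}_i^n})^4$ is dominated by $n^{-4/3}\sum_{i>I} (Z_i^n)^2$, which is uniformly small by Aldous's $\ell^2_{\searrow}$ convergence of $n^{-2/3}\mathbf{Z}^n$. Combining this tightness at infinity with the finite-dimensional convergence of the first $I$ components from the previous steps yields Theorem~\ref{thm:main}.
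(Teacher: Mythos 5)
Your overall architecture does match the paper's: a depth-first encoding of components as spanning trees plus surplus edges, convergence of the coding walk, a gluing construction for the limit, and a truncation argument to upgrade product convergence to convergence in the metric (\ref{4metric}). However, two of your steps contain genuine gaps as written. The first concerns the tree convergence. Conditional on its size $m$, the depth-first spanning tree of a component is \emph{not} a uniform random tree: its law is biased by a factor $(1-p)^{-a(T)}$, where $a(T)$ is the area under the depth-first walk (this is forced by the fact that a tree $T$ is the depth-first tree of exactly $\binom{a(T)}{k}$ connected graphs with surplus $k$). So one cannot simply ``invoke the Aldous--Le Gall theory'': one must perform a change of measure and prove uniform integrability of $(1-p)^{-a(T_m)}$ for $p \asymp m^{-3/2}$ (the paper's Lemma~\ref{lem:conv_moments_area}, resting on Gaussian tails for $\|X^m\|$), and one must transfer the Marckert--Mokkadem comparison between the depth-first walk and the height process to this tilted measure. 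Convergence of the Lukasiewicz-type walk alone does not give Gromov--Hausdorff convergence of the tree, which is coded by the height (or contour) process, not by the walk; and your gluing step additionally needs the observation that a surplus edge joins $v_i$ to a vertex of the stack $\mathcal O_i$, all of whose elements lie at distance $1$ from the root-to-$v_i$ path --- this is what makes ``identify a leaf with a uniform point on its ancestral path'' the correct limit and what lets one bound the distortion of shortest paths crossing several glued pairs.

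The second gap is the $\ell^4$ tail. The statement that $\diamc{\mathcal C_i^{n}}$ is ``typically of order $(Z_i^{n})^{1/2}$'' does not control fourth moments, and the crude bound $\diamc{\mathcal C_i^{n}} \le Z_i^{n}$ is useless here ($n^{-4/3}(Z_i^{n})^4$ is of order $n^{4/3}$ for components of size $n^{2/3}$). What is needed is a moment bound of the form $\Ec{\|\tilde T_m^p\|^4} = O(m^2)$ holding \emph{uniformly in $m \le n$}, and precisely because of the tilting this fails for $m \gg n^{2/3}$: the paper's Lemma~\ref{tiltheight} gives $\Ec{\|\tilde T_m^p\|^4} = O(\max(m^6 n^{-4},1)\, m^2)$, and the divergent factor must be beaten by a separate stretched-exponential bound $\Prob{Z_1^{n} \ge x n^{2/3}} \le e^{-\gamma x^3}$ on the largest component (Nachmias--Peres). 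Without these two ingredients the interchange of limits needed for (\ref{4metric2show}) is not justified, so the passage from componentwise convergence to convergence in (\ref{4metric}) remains open in your proposal.
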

We will eventually state and prove a more precise version of this theorem, Theorem \ref{thm:4metric}, once we have introduced the appropriate limiting sequence. 
For the moment, we will remark only that convergence in the distance defined above implies convergence of distances in the graph.  As a result, we will be able to give a precise answer to a great many asymptotic distance problems for $G(n,p)$ inside the critical window.
Before we can give an intuitive description of our limit object, we need to introduce one of its fundamental building blocks: the continuum random tree.

\subsection*{The continuum random tree}

In recent years, a huge literature has grown up around the notion of \emph{real trees}.  Here we will concentrate on the most famous random example of such trees, Aldous' Brownian continuum random tree (see \cite{aldous91crt1,aldous91crt2,aldous93crt3}), and encourage the interested reader to look at \cite{evans08probrealtrees,duquesnelegall02randomtrees,legall05survey} and the references therein for more general cases.

The fundamental idea is that continuous functions can be used to encode tree structures.  We will begin our discussion by considering a rooted combinatorial tree on $n$ vertices labeled by $[n] := \{1,2\ldots,n\}$.  There are three (somewhat different) encodings of such a tree which will be useful to us.  We will introduce two of them here and explain the third, which plays a more technical role, in the main body of the paper (see Section \ref{dfs}).  For both of the encodings we will discuss here, we need to introduce the notion of the \emph{depth-first ordering} of the vertices.  For each vertex $v$, there is a unique path from $v$ to the root, $\rho$.  Call the vertices along this path the \emph{ancestors} of $v$.  Relabel each vertex by the string which consists of the concatenation of all the labels of its ancestors and its own label, so that if the path from $\rho$ to $v$ is $\rho, a_1, a_2, \ldots, a_m, v$, relabel $v$ by the string $\rho a_1 a_2 \ldots a_m v$.  The depth-first ordering of the vertices is then precisely the lexicographical ordering on these strings.  More intuitively, we look first at the root, then at its lowest-labeled child, then at the lowest-labeled child of that vertex, and so on until we hit a leaf.  Then we backtrack one generation and look at the next lowest-labeled child and its descendents as before.  See Figure \ref{fig:dforder}.

\begin{figure}
\centering
\begin{picture}(420,110)
\put(60,0){\includegraphics[scale=.75]{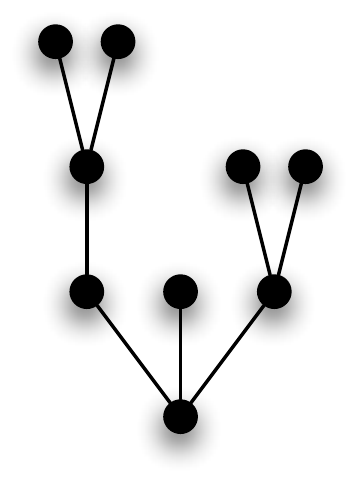}}
\put(105,5){$1$}
\put(69,42){$3$}
\put(90,42){$5$}
\put(110,42){$8$}
\put(69,69){$9$}
\put(102,69){$6$}
\put(132,69){$7$}
\put(60,95){$2$}
\put(93,95){$4$}
\put(270,0){\includegraphics[scale=.75]{tree}}
\put(315,5){$1$}
\put(279,42){$2$}
\put(300,42){$6$}
\put(320,42){$7$}
\put(279,69){$3$}
\put(312,69){$8$}
\put(342,69){$9$}
\put(270,95){$4$}
\put(302,95){$5$}
\end{picture}
\label{fig:dforder}
\caption{Left: a tree on $[9]$.  Right: the same tree but labeled in depth-first order.}
\end{figure}

The first encoding is in terms of the \emph{height function} (or, when the tree is random, the \emph{height process}).  For $0 \leq i \leq n-1$, let $H(i)$ be the graph distance from the root of the $(i+1)$-st vertex visited in depth-first order (so that $H(0) = 0$, since we always start from the root).  Then the height function of the tree is the discrete function $(H(i), 0 \leq i \leq n-1)$.  (See Figure \ref{fig:heightcontour}.)  (It is, perhaps, unfortunate that we talk about a \emph{depth}-first ordering and vertices having \emph{heights}.  The reason for this is that the two pieces of terminology originated in different communities.  However, since both are now standard, we have chosen to keep them and hope that the reader will forgive the ensuing clumsiness.)

\begin{figure}
\centering
\begin{picture}(420,120)
\put(-21,2){\includegraphics[scale=.65]{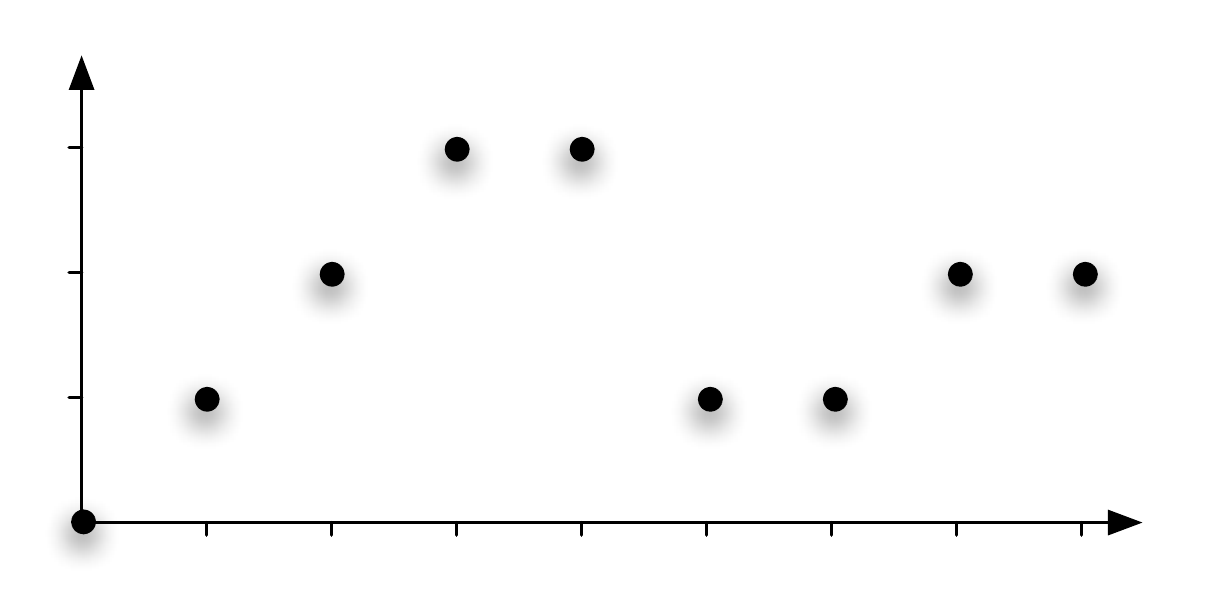}}
\put(16,0){$1$}
\put(40,0){$2$}
\put(63,0){$3$}
\put(87,0){$4$}
\put(110,0){$5$}
\put(133,0){$6$}
\put(157,0){$7$}
\put(180,0){$8$}
\put(196,0){$i$}
\put(-20,40){$1$}
\put(-20,64){$2$}
\put(-20,88){$3$}
\put(-20,121){$H(i)$}
\put(220,10){\includegraphics[scale=.65]{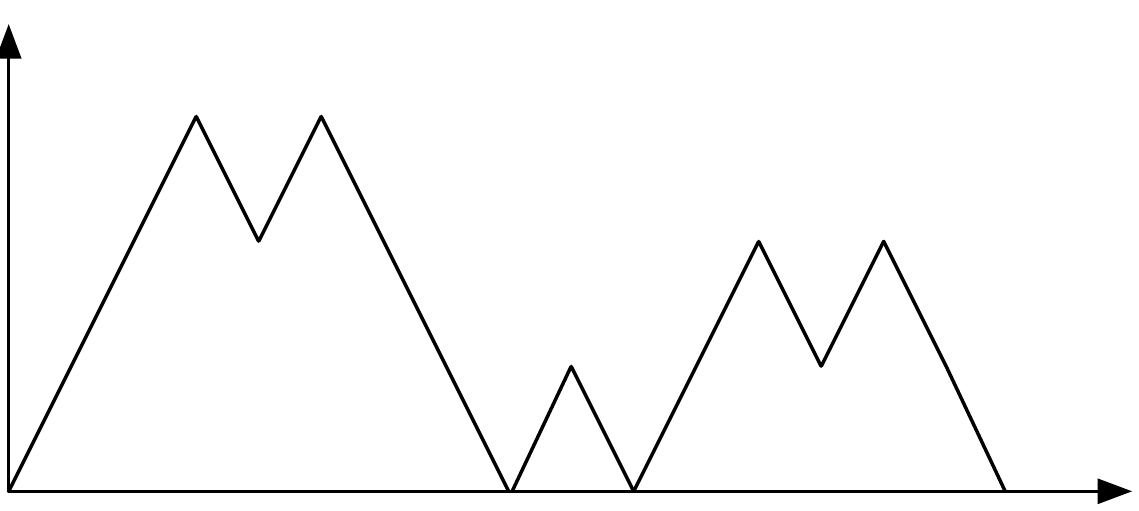}}
\put(210,115){$C(t)$}
\put(430,0){$t$}
\end{picture}
\caption{The height process and contour process associated with the tree in Figure \ref{fig:dforder}.}
\label{fig:heightcontour}
\end{figure}

The second encoding is via the \emph{contour function} (or, when the tree is random, the \emph{contour process}).  Starting at the root, trace the contour of the tree in depth-first order at speed 1 and record the current distance from the root, $C(t)$ at time $t$.  That is, instead of just hopping from vertex to vertex, take the shortest tree path between them at speed 1.  (See Figure \ref{fig:heightcontour}.)  It is easy to see that every edge is now traversed twice, and so this produces a function $(C(t), 0 \leq t \leq 2(n-1))$.

Note that the topology of the tree, but not the labels, can be recovered from either the height or contour function.

Suppose now that we take a uniform random tree on $[n]$, rooted at 1.  Let $(H^n(i), 0 \leq i \leq n-1)$ be its height process and $(C^n(t), 0 \leq t \leq 2(n-1))$ its contour process.

\begin{thm}[\citet{aldous93crt3}] \label{thm:aldousheightcontour}
Let $(e(t), 0 \leq t \leq 1)$ be a standard Brownian excursion. Then, as $n \to \infty$,
\begin{gather*}
\frac{1}{\sqrt{n}}(H^n(\fl{nt}), 0 \leq t \leq 1)  \convdist 2(e(t), 0 \leq t \leq 1) 
\qquad\mbox{and}\qquad
\frac{1}{\sqrt{n}}(C^n(nt), 0 \leq t \leq 2)\convdist 2(e(t/2), 0 \leq t \leq 2).
\end{gather*}
\end{thm}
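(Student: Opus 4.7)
The strategy is to identify the uniform rooted tree on $[n]$ with a conditioned Galton--Watson tree and then invoke an invariance principle for the depth-first encoding. Let $T$ be uniform on rooted labelled trees on $[n]$ and let $t(T)$ be the plane tree obtained by ordering the children of each vertex by label. A short counting argument using Cayley's formula shows that, for any plane tree $t^*$ on $n$ vertices with degree sequence $(c_v)$,
\[
\Prob{t(T)=t^*} \propto \frac{1}{\prod_v c_v!},
\]
which is exactly the law of a Galton--Watson tree with Poisson$(1)$ offspring conditioned to have $n$ vertices. Since the height and contour processes depend only on the plane-tree structure, it suffices to prove both convergences for this conditioned Galton--Watson tree.

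Next, encode the plane tree by its Lukasiewicz walk: with vertices $v_0, \ldots, v_{n-1}$ listed in depth-first order, set $S^n(0)=0$ and $S^n(k) = S^n(k-1) + (c_{v_{k-1}}-1)$. Under the Galton--Watson$(\mathrm{Poisson}(1))$ law, $S^n$ is a centred random walk with step variance $\sigma^2=1$, conditioned to first hit $-1$ at time $n$ and to stay nonnegative beforehand. Donsker's invariance principle for excursion bridges then gives $n^{-1/2} S^n(\lfloor n\cdot \rfloor) \convdist e$ on Skorokhod space. The height process is recovered from $S^n$ via the combinatorial identity
\[
H^n(k) = \#\left\{0 \le j < k : S^n(j) = \min_{j \le i \le k} S^n(i)\right\},
\]
which counts the strict ancestors of $v_k$. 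This functional is \emph{not} continuous on Skorokhod space, so the core technical work is (i) to establish tightness of $(n^{-1/2} H^n(\lfloor n\cdot\rfloor))$ via second-moment estimates exploiting the branching structure, and (ii) to show that for each $t$, $n^{-1/2} H^n(\lfloor nt \rfloor)$ is asymptotically $2/\sigma^2 = 2$ times a local-time functional of $S^n/\sqrt n$ at its running infimum, which is distributed as $2 e(t)$.

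Finally, the contour convergence follows by a deterministic time-change. The first-visit time of $v_{k+1}$ by the contour is $T^n_k = 2k - H^n(k)$, and $C^n$ is $1$-Lipschitz, so $C^n(T^n_k) = H^n(k)$ and $|T^n_k - 2k| \le \max_k H^n(k) = O(\sqrt n)$ uniformly (from the previous step). Substituting $2k \leftrightarrow nt$ converts the height-process limit into the stated contour limit with time argument $t/2$. I expect the passage from $S^n$ to $H^n$ to be the principal obstacle: the factor of $2$ between the Lukasiewicz and height limits is not a consequence of continuous mapping but of the ``local time at the infimum'' identity relating records of the walk to ancestral heights, which is the heart of Aldous's (and subsequent authors') analyses.
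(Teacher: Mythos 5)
The paper does not prove this theorem; it is quoted from Aldous and the only machinery the paper actually imports in this direction is the Marckert--Mokkadem joint convergence (Theorems \ref{mamothm} and \ref{thm:MaMobound}). Your sketch is a correct outline of what has become the standard proof, and in fact it is precisely the route Marckert and Mokkadem (and Duquesne--Le Gall) take: identify the uniform labelled rooted tree with a Poisson$(1)$ Galton--Watson tree conditioned to have $n$ vertices, prove convergence of the {\L}ukasiewicz walk to $e$ by a conditioned Donsker theorem, pass to the height process via the records identity $H^n(k)=\#\{j<k:S^n(j)=\min_{j\le i\le k}S^n(i)\}$, and recover the contour process by the time change $k\mapsto 2k-H^n(k)$ together with the $1$-Lipschitz property. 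You are also right about where the real work lies: the map from $S^n$ to $H^n$ is not continuous, and the factor $2/\sigma^2$ comes from a renewal/law-of-large-numbers analysis of the records of the reversed walk, plus a separate tightness argument for $H^n$ -- these are the two steps you flag as (i) and (ii) but do not carry out, and they constitute the bulk of any complete proof. Two small remarks: Aldous's original 1993 argument was genuinely different (finite-dimensional convergence of reduced subtrees spanned by random leaves, plus tightness), so your route is the modern replacement rather than a reconstruction of the cited proof; and in the contour step the bound $\max_k H^n(k)=O(\sqrt n)$ should be read as $O_P(\sqrt n)$ (tightness of $n^{-1/2}\max_k H^n(k)$, which follows from the height-process convergence and suffices for the substitution $2k\leftrightarrow nt$).
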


Here, convergence is in the space $\mathbb{D}([0,1],\R^+)$ of non-negative c\`adl\`ag functions (right-continuous with left limits), equipped with the Skorohod topology (see, for example, \citet{Billingsley1968}).

In fact, this convergence turns out to imply that the \emph{tree itself} converges, in a sense which we will now make precise.  We follow the exposition of \citet{legall05survey}.  Take a uniform random tree on $[n]$ and view it as a path metric space $T_n$ by taking the union of the line segments joining the vertices, each assumed to have length 1.  (Note that the original tree-labels no longer play any role, except that we will think of the metric space $T_n$ as being rooted at the point corresponding to the old label 1.)  Then the distance between two elements $\sigma$ and $\sigma'$ of $T_n$ is simply the length of the shortest path between them.  We will abuse notation somewhat and write $n^{-1/2}T_n$ for the same metric space with all distances rescaled by $n^{-1/2}$.

In order to state the convergence result, we need to specify the limit object.  We will start with some general definitions.

A compact metric space $(\mathcal{T},d)$ is a \emph{real tree} if for all $x,y \in \mathcal{T}$
\begin{itemize}
\item there exists a unique geodesic from $x$ to $y$ i.e.\ there exists a unique isometry $f_{x,y}: [0, d(x,y)] \to \mathcal{T}$ such that $f_{x,y}(0) = x$ and $f_{x,y}(d(x,y)) = y$.  The image of $f_{x,y}$ is called $[[x,y]]$;
\item the only non-self-intersecting path from $x$ to $y$ is $[[x,y]]$ i.e.\ if $q: [0,1] \to \mathcal{T}$ is continuous and injective and such that $q(0) = x$ and $q(1) = y$ then $q([0,1]) = [[x,y]]$.
\end{itemize}
An element $x \in \mathcal{T}$ is called a \emph{vertex}.  A \emph{rooted real tree} is a real tree $(\mathcal{T},d)$ with a distinguished vertex $\rho$ called the \emph{root}.  The \emph{height} of a vertex $v$ is $d(\rho,v)$.  By a \emph{leaf}, we mean a vertex $v$ which does not belong to $[[\rho,w]]$ for any $w \neq v$.  Write $\mathcal{L}(\mathcal{T})$ for the set of leaves of $\mathcal{T}$.  Finally, write $[[x,y[[$ for $f_{x,y}([0, d(x,y)))$.

Suppose that $h: [0,\infty) \to [0, \infty)$ is a continuous function of compact support such that $h(0) = 0$.  Use it to define a pseudo-metric $d$ by
\[
d(x,y) = h(x) + h(y) - 2 \inf_{x\wedge y \leq t \leq x \vee y} h(t), \qquad x,y \in [0, \infty).
\]
Let $x \sim y$ if $d(x,y) = 0$, so that $\sim$ is an equivalence relation.  Let $\mathcal{T} = [0,\infty)/\sim$ and denote by $\tau: [0,\infty) \to \mathcal{T}$ the canonical projection.  If $\sigma$ is the supremum of the support of $h$ then note that $\tau(s) = 0$ for all $s \geq \sigma$.  This entails that $\mathcal{T} = \tau([0,\sigma])$ is compact.  The metric space $(\mathcal{T},d)$ can then be shown to be a real tree.  
Set $\rho = \tau(0)$ and take $\rho$ to be the root.
  
Now take 
\[
h(t) = \begin{cases}
          2e(t) & 0 \leq t \leq 1, \\
          0 & t > 1,
          \end{cases}
\]
where, as in Theorem \ref{thm:aldousheightcontour}, and for the rest of the paper, $(e(t), 0 \le t \le 1)$ is a standard Brownian excursion.  Then the resulting tree is the \emph{Brownian continuum random tree} (or \emph{CRT}, when this is unambiguous).  Note that the role of the Brownian excursion (multiplied by 2) is that of both height and contour process.  We will always think of the CRT as rooted. We then have the following. 

\begin{thm}[\citet{aldous93crt3,legall06realtrees}]
Let $T_n$ be the metric space corresponding to a uniform random tree on $[n]$ and let $\mathcal{T}$ be the CRT. Then
\[
n^{-1/2} T_n \convdist \mathcal{T},
\]
as $n \to \infty$, where convergence is in the Gromov--Hausdorff sense.
\end{thm}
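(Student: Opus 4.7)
The plan is to deduce the Gromov--Hausdorff convergence from the contour process convergence of Theorem~\ref{thm:aldousheightcontour} by establishing continuity of the ``tree coded by a function'' map $h \mapsto \mathcal{T}(h)$ introduced just before the statement. The starting observation is that the combinatorial tree $T_n$, viewed as a path metric space with each edge having unit length and rooted at the vertex labeled $1$, is canonically isometric as a rooted metric space to $\mathcal{T}(C^n)$. Consequently, $n^{-1/2} T_n$ is isometric to the rooted real tree coded by the rescaled function $\bar C^n(t) := n^{-1/2} C^n(nt)$ for $t \in [0,2]$, up to a negligible adjustment at the right endpoint due to the fact that $C^n$ is parametrized on $[0,2(n-1)]$ rather than $[0,2n]$.

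The heart of the argument is a deterministic Lipschitz lemma: for any continuous $h, h' \colon [0,\sigma] \to \mathbb{R}^+$ with $h(0) = h'(0) = 0$,
\[
d_{GH}(\mathcal{T}(h), \mathcal{T}(h')) \le 2\,\|h - h'\|_\infty.
\]
To prove it I would exhibit the canonical correspondence $\mathcal{R} = \{(\tau_h(t), \tau_{h'}(t)) : t \in [0,\sigma]\}$ between the two rooted trees; applying the triangle inequality to the explicit formula $d_h(s,t) = h(s) + h(t) - 2\inf_{[s\wedge t, s\vee t]} h$, and using the elementary bound $|\inf f - \inf g| \le \|f - g\|_\infty$, one gets
\[
|d_h(s,t) - d_{h'}(s,t)| \le |h(s) - h'(s)| + |h(t) - h'(t)| + 2\,\bigl|\,\textstyle\inf h - \inf h'\bigr| \le 4\,\|h-h'\|_\infty,
\]
which bounds the distortion of $\mathcal{R}$. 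Since $\mathcal{R}$ pairs the two roots $\tau_h(0)$ and $\tau_{h'}(0)$, the bound also holds in the rooted sense, and the lemma follows.

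To finish, Theorem~\ref{thm:aldousheightcontour} gives $\bar{C}^n \convdist 2\,e(\cdot/2)$ in the Skorokhod topology on $\mathbb{D}([0,2], \mathbb{R}^+)$; since the limit has continuous paths, the convergence in fact holds in the uniform topology. Invoking the Skorokhod representation theorem, I would pass to a probability space on which $\|\bar{C}^n - 2\,e(\cdot/2)\|_\infty \to 0$ almost surely. The Lipschitz lemma then forces $d_{GH}(n^{-1/2} T_n, \mathcal{T}) \to 0$ almost surely, which implies the desired convergence in distribution.

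The main obstacle is really the deterministic Lipschitz lemma: choosing the right correspondence (pairing points that have the same contour-time parameter) and carefully verifying the distortion bound is where all the content lies. A minor technical point is the right-endpoint time rescaling mismatch mentioned above, which is dealt with by noting that $C^n$ is $1$-Lipschitz, so the resulting sup-norm error is $O(n^{-1/2})$ and hence negligible. Once these pieces are in hand, the remainder of the proof is a standard combination of the Skorokhod representation theorem and a continuous mapping argument.
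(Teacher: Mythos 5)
Your argument is correct. Note that the paper does not actually prove this statement---it is quoted as a known result from the cited references---but your proof is essentially the standard one from those references: code the discrete tree by its contour function, establish the deterministic bound $d_{GH}(\mathcal{T}(h),\mathcal{T}(h'))\le 2\|h-h'\|$ via the correspondence $\{(\tau_h(t),\tau_{h'}(t))\}$ and the estimate $|d_h(s,t)-d_{h'}(s,t)|\le 4\|h-h'\|$, and conclude by Skorohod representation applied to Theorem~\ref{thm:aldousheightcontour}. This is also exactly the mechanism the paper itself deploys later (the same $4\|h_1-h_2\|$ distortion bound appears inside the proof of Lemma~\ref{lem:bound_gh_real}), so your route is fully consistent with the paper's machinery; the only unstated (but harmless) point is that the tree coded by $2e(\cdot/2)$ on $[0,2]$ is isometric to the CRT coded by $2e$ on $[0,1]$, since the coding is invariant under increasing time reparametrization.
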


It is perhaps useful to note here that the limit tree $\mathcal{T}$ comes equipped with a \emph{mass measure} $\mu$, which is simply the probability measure induced on $\mathcal{T}$ from Lebesgue measure on $[0,1]$.  Unsurprisingly, $\mu$ is the limit of the empirical measure on the uniform tree on $[n]$ which puts mass $1/n$ on each vertex.  Later on, we will use the fact that $\mu(\mathcal{L}(\mathcal{T})) = 1$, i.e., $\mu$ is concentrated on the leaves of the CRT \cite[][p. 60]{aldous91crt2}.

\subsection*{The limit of a critical random graph} 

In this section we give a non-technical description of the limiting object in Theorem \ref{thm:main}. 
Conditional on their size and surplus, components of $G(n,p)$ are uniform connected graphs with that size and surplus.  Moreover, as we have discussed, in the critical window, where $p = n^{-1} + \lambda n^{-4/3}$ for some $\lambda \in \mathbb{R}$, the largest components have size of order $n^{2/3}$ and surplus of constant order.  In order to understand better the structure of these components, we look at uniform connected graphs with ``small" surplus.  For definiteness, we will consider a uniform connected graph on $m$ vertices with surplus $s$.

Such connected graphs always possess spanning subtrees.  A particular one of these, which we will refer to as the \emph{depth-first tree}, will be very useful to us.  As its name suggests, this tree is constructed via a depth-first search procedure (which we will not detail until later).  The depth-first tree of a uniform random connected graph with $s$ surplus edges is \emph{not} a uniform random tree, but has a ``tilted'' distribution which is biased (in a way depending on $s$) in favor of trees with large \emph{area} (for ``typical" trees, this is essentially the sum of the heights of the vertices of the tree). We will define this tilting precisely later, and will then spend much of the paper studying it. 

The limit of a uniform random tree on $m$ vertices, thought of as a metric space with graph distances rescaled by $m^{-1/2}$,  is the continuum random tree.  It turns out that the limit of the depth-first tree associated with a connected component with surplus $s$, with the same rescaling, is a continuum random tree coded by a Brownian excursion whose distribution is biased in favor of excursions having large area (where area now has its habitual meaning; once again, the bias depends on $s$).

The difference between the depth-first tree and the connected graph is precisely the $s$ surplus edges.  The depth-first tree is convenient because not only can we describe its continuum limit but, given the tree, it is straightforward to describe where surplus edges may go (we call such locations \emph{permitted edges}).  Indeed, the surplus edges are equally likely to be any of the possible $s$-sets of permitted edges.  A careful analysis of the locations of the surplus edges in the finite graph leads to the following surprisingly simple limit description for a uniform random connected graph with surplus $s$.  Take a continuum random tree with tilted distribution and independently select $s$ of its leaves with density proportional to their height.  For each of these leaves there is a unique path to the root of the tree.  Independently for each of the $s$ leaves, pick a point uniformly along the path and identify the leaf and the selected point.  (Note that we identify the points because edge-lengths have shrunk to 0 in the limit.)

Having thus described the limit of a \emph{single} component of a critical random graph, it remains to describe the limit of the \emph{collection} of components.  The key is Aldous' description of the limiting sizes and surpluses of the components in terms of the excursions above 0 of the reflected Brownian motion with parabolic drift.  The excursion lengths give the limiting component sizes.  The auxilliary Poisson process of points with unit intensity under the graph of the reflected process gives the limit of the numbers of surplus edges.  In fact, more is true.  The excursions themselves can be viewed as coding the sequence of limits of depth-first trees of the components; the locations of the Poisson points under the excursions can be seen to correspond in  a natural way to the locations of the surplus edges.  Intuitively, the successive excursions are selected according to an inhomogeneous excursion measure associated with the process.  Under this measure, the length and area of an excursion are related in precisely the correct ``tilted'' manner, so that, conditional on an excursion having length $\sigma$ and $s$ Poisson points, the metric space it codes has precisely the distribution of the limit of a uniform random connected graph on $\sim \sigma n^{2/3}$ vertices with $s$ surplus edges, whose edge-lengths have been rescaled by $n^{-1/3}$.

\subsection*{The diameter of random graphs} 

The \emph{diameter} of a connected graph is the largest distance between any pair of vertices of the graph. 
For a general graph $G$, we define the diameter of $G$ to be the greatest distance between any pair of vertices lying in the same connected component. 

The behavior of the diameter of $G(n,p)$ for $p=O(1/n)$ is a pernicious problem for which few detailed results were known until extremely recently \citep{RiWo2008}. 
(For references on distances in dense graphs $G(n,p)$ with $p$ fixed, see \cite{Bollobas2001}.) 
In the subcritical phase, when $p=p(n)=1/n+\lambda(n)n^{-4/3}$ and $\lambda\to-\infty$, \citet{Luczak1998} showed that the diameter of $G(n,p)$ is within one of the largest diameter of a tree component with probability tending to one. \citet{ChLu2001} focused on the early supercritical phase, when $np>1$ and $np\le c \log n$. (Problem 4 in their paper asks about the diameter of $G(n,p)$ inside the critical window.) More recently, \citet{RiWo2008} have addressed the problem for the range $p=c/n$, $c>1$ fixed, proving essentially best possible bounds on the behavior of the diameter for such $p$. They also have results for $p=1/n + \lambda n^{-4/3}$, with $\lambda=o(n^{1/3})$ which are also essentially optimal, 
but their error terms require that $\lambda =\lambda(n) \geq e^{(\log^* n)^4)} \rightarrow \infty$ as $n \rightarrow \infty$. When $\lambda$ is fixed, $G(n,p)$ contains several complex (i.e., with multiple cycles) components of comparable size, and any one of them has a non-vanishing probability of accounting for the diameter. \citet{NaPe2008} have shown that the greatest diameter of any connected component of $G(n,p)$ is with high probability $\Theta(n^{1/3})$ for $p$ in this range; this result also 
follows trivially from work of \citet*{addarioberry2006dms,addario08diam} on the diameter of the minimum spanning tree of a complete graph in which each edge $e$ has an independent uniform $[0,1]$ edge weight. 

In this paper, we demonstrate how Theorem \ref{thm:main} allows us to straightforwardly derive precise results on the diameter of $G(n,p)$ for $p=1/n+\lambda n^{-4/3}$ with $\lambda$ fixed. 
(In a companion paper \citep{AdBrGo2009b}, we use the limit theory developed in this paper to answer many other questions about the distribution of distances between vertices in critical random graphs.) 

\begin{thm}\label{thm:diam_gnp}
Suppose that $p=1/n+\lambda n^{-4/3}$ for $\lambda\in \R$. Let $D^{n}_i$ denote the diameter of the $i$-th largest connected component of $G(n,p)$. Let $D^{n}=\sup_i D^{n}_i$ denote the diameter of $G(n,p)$ itself.
For each fixed $i$ there is a random variable $D_i\geq 0$ with  $\E{D_i} < \infty$ such 
that 
$$n^{-1/3}D_i^{n} \convdist D_i.$$  
Furthermore there is a random variable $D \geq 0$ with an absolutely continuous distribution and $\E{D} < \infty$ such that $n^{-1/3} D^n \convdist D$. 
\end{thm}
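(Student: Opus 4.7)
The plan is to leverage Theorem \ref{thm:main} throughout and reduce the four assertions (distributional convergence of each $D_i^n$, finiteness of $\E{D_i}$, distributional convergence of $D^n$, and absolute continuity of $D$) to properties of the limiting sequence $\mathbf{M} = (M_1, M_2, \ldots)$ together with tail/tightness estimates at the prelimit level.

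First I would establish the per-component convergence. The diameter functional $A \mapsto \diam{A}$ is $2$-Lipschitz on the set of compact metric spaces equipped with the Gromov--Hausdorff metric (if $(M,\delta)$ embeds two compact sets with Hausdorff distance $\varepsilon$, any two points in one can be approximated to within $\varepsilon$ in the other). Thus the continuous mapping theorem applied to Theorem \ref{thm:main} gives $n^{-1/3} D_i^n \convdist \diam{M_i} =: D_i$ for each fixed $i$. For finiteness of $\E{D_i}$, the cleanest route is to establish uniform integrability of $\{n^{-1/3} D_i^n\}_n$. Each component $\mathcal{C}_i^n$ contains its depth-first spanning tree, so $D_i^n$ is bounded by twice the height of this tree. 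Since the depth-first tree has a distribution tilted from uniform by a factor that is at most polynomial in the excursion area, one can transfer the moment bounds for the height of a uniform random tree (obtained from Aldous' height-process estimates) to obtain a bound $\E{(n^{-1/3} D_i^n)^k} \le C_{k,i}$ for any $k$, which suffices.

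For the convergence of $n^{-1/3} D^n$, the difficulty is that $D^n$ depends on infinitely many components. I would proceed by the standard tightness-plus-finite-dimensional argument. Define $D = \sup_{i} D_i$. Using joint GH-convergence, the continuous mapping theorem gives $n^{-1/3}(D_1^n, \ldots, D_K^n) \convdist (D_1, \ldots, D_K)$ for each fixed $K$, and hence $n^{-1/3} \max_{i \le K} D_i^n \convdist \max_{i \le K} D_i$. It then suffices to control the tails: for each $\varepsilon > 0$, I would show
\[
\lim_{K \to \infty} \limsup_{n \to \infty} \Prob{n^{-1/3} \sup_{i > K} D_i^n > \varepsilon} = 0,
\]
and similarly for the limit. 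A conditional-on-size diameter bound of the form $D_i^n \le C\sqrt{Z_i^n \log Z_i^n}$ with high probability (valid for a uniform connected graph of prescribed size and bounded surplus) combined with the $\ell^2$-tightness of $(n^{-2/3} Z_i^n)_{i \ge 1}$ from Theorem \ref{thm:aldousthm} gives the required uniform tail estimate.

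Finally, for absolute continuity of $D$, I would condition on the $\sigma$-algebra $\mathcal{G}$ generated by $(\mathbf{Z}, \mathbf{S})$. The limit spaces $M_i$ are, by the construction sketched after Theorem \ref{thm:main}, built from disjoint pieces of the driving Brownian excursion structure together with independent auxiliary randomness, hence conditional on $\mathcal{G}$ the $D_i$ are independent. If each conditional law $\mathcal{L}(D_i \mid \mathcal{G})$ is absolutely continuous with a bounded density, then $F_D(x \mid \mathcal{G}) = \prod_i F_{D_i}(x \mid \mathcal{G})$ is absolutely continuous in $x$, and hence so is $F_D$ after averaging. The single-component absolute continuity reduces, after scaling, to showing that the diameter of a tilted CRT with finitely many identified leaf-to-path pairs has a density; this is equivalent by absolute continuity of the tilt to showing the same for the CRT itself and its relatives, which in turn follows from the non-degenerate dependence of the diameter on the Brownian excursion and a standard argument using scaling and the fact that $e(t)$ has no fixed local maxima of prescribed height.

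The main obstacles I anticipate are (i) the uniform tail estimate on $\sup_{i > K} D_i^n / n^{1/3}$, which requires joint control of component sizes and diameters and a quantitative concentration for diameters of random connected graphs of given size and surplus; and (ii) the absolute continuity of the conditional law of $D_i$, since it is not obviously available from an off-the-shelf result about the CRT and must be argued from the structure of the tilted excursion plus identified leaves.
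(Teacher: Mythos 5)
Your overall architecture matches the paper's: continuous mapping via the $2$-Lipschitz property of the diameter for each fixed $i$, a $\lim_{K}\limsup_n$ tail estimate on $\sup_{i>K} n^{-1/3}D_i^n$ to pass from finitely many components to the supremum, and a conditional-independence argument for absolute continuity. However, there are two genuine gaps. The first is quantitative and would make your tail estimate fail as written. A bound of the form $D_i^n \le C\sqrt{Z_i^n \log Z_i^n}$, combined with $\ell^2_{\searrow}$-tightness of the rescaled sizes, gives for components of size at most $\delta n^{2/3}$ only
\[
n^{-1/3} D_i^n \;\le\; C\sqrt{\tfrac{Z_i^n}{n^{2/3}}\,\log Z_i^n}\;\le\; C\sqrt{\delta \log (\delta n^{2/3})}\;\longrightarrow\;\infty
\]
as $n\to\infty$ for fixed $\delta>0$, so $\limsup_n \Prob{n^{-1/3}\sup_{i>K}D_i^n>\eps}$ cannot be controlled this way: the logarithmic factor is exactly what must not be there. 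What is actually needed is a Gaussian-type tail for the height of the tilted depth-first spanning tree on scale $\sqrt{m}$ with no logarithmic correction, i.e.\ $\E{\|\tilde T_m^p\|^4}=O(m^2)$ uniformly over the relevant range of $m$ (with an explicit correction factor for $m\gg n^{2/3}$ that is then beaten by the Nachmias--Peres tail bound $\Prob{Z_1^n\ge xn^{2/3}}\le e^{-\gamma x^3}$). This yields $\sup_{i,n}\E{\|\tilde T_i^n\|^4 (Z_i^n)^{-2}}<\infty$, and then Markov's inequality applied to $\sum_{i>K} n^{-4/3}\|\tilde T_i^n\|^4$ together with the smallness of $\sum_{i>K}(n^{-2/3}Z_i^n)^2$ gives the uniform tail estimate. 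You flag this step as an anticipated obstacle, but the specific bound you propose to use there is not strong enough.

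The second gap is that $\E{D}<\infty$ is asserted in the theorem but nowhere addressed in your proposal: your moment bounds $\E{(n^{-1/3}D_i^n)^k}\le C_{k,i}$ depend on $i$, and the tail estimate controls convergence in distribution but not the expectation of the supremum, so neither gives integrability of $D$. (The paper imports this from an external result on the diameter of the minimum spanning tree.) Two smaller points: the tilting of the depth-first tree is $(1-p)^{-a(T)}$, which is \emph{exponential}, not polynomial, in the area; the transfer of height moments from uniform to tilted trees still works, but only because the exponential moment $\E{(1-p)^{-\xi a(T_m)}}$ is uniformly bounded for $p=O(m^{-3/2})$, which must be proved. Also, for $\E{D_i}<\infty$ there is a shorter route than uniform integrability of the prelimit: bound the limit directly by the diameter of the underlying tilted CRT, $D_i\le 2\sqrt{Z_i}\,\|\tilde e_i\|$ with $\tilde e_i$ independent of $Z_i$ and $\E{\|\tilde e_i\|^4}<\infty$. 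Your sketch of absolute continuity (conditioning on the sizes, conditional independence, densities for each component) is consistent with what is needed, though the single-component density claim still requires an argument.
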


\subsection*{Plan of the paper}

The depth-first procedure is presented in Section~\ref{dfs}.  Given a connected component of $G(n,p)$ of size $m$ it yields a ``canonical'' spanning tree $\tilde T_m^p$. The distribution of the tree $\tilde T_m^p$ is studied in Section~\ref{sec:tilted_trees}. In Section~\ref{sec:connected_components}, we describe the graphs obtained by adding random surplus edges to the trees $\tilde T_m^p$ and introduce the continuous limit of connected components conditional on their size. Finally, very much as $G(n,p)$ may be obtained by taking a sequence of connected components which are independent given their sizes, the continuum limit of $G(n,p)$ can be constructed by first setting the sizes of the components to have the correct distribution, and then generating components independently. This is described in Section~\ref{sec:random_graph_limit}.


\section{Depth-first search and random graphs}
\label{dfs}

Let $G=(V,E)$ be an undirected graph with $V=[n]=\{1,\ldots,n\}$. The {\em ordered depth-first search forest} for $G$ (started from vertex $k$) 
is the spanning forest of $G$ obtained by running depth-first search (DFS) on $G$, using the rule that 
whenever there is a choice of which vertex to explore, the smallest-labeled vertex is always explored first.
For clarity, we explain more precisely what we mean by this description, and introduce some relevant notation. 
For $i\ge 0$, we define the ordered set (or \emph{stack} \cite[see][]{CoLeRiSt2001}) $\so_i$ of open vertices at time $i$, and the set $\mathcal A_i$ of the vertices that have already been explored at time $i$. We say that a vertex $u$ has been \emph{seen} at time $i$ if $u\in \so_i \cup \mathcal A_i$.  Let $c_i$ be a counter which keeps track of how many components have been discovered up to time $i$.

\begin{steplist}
	\item[{\sc {\bf oDFS}}($G$)\hspace{24pt}]
	\item[{\sc Initialization}] Set $\so_0=(1)$, $\mathcal A_0=\emptyset$, $c_0 = 1$. 
	\item[{\sc Step $i$}\hspace{38pt}] ($0 \leq i \leq n-1$): Let $v_{i}$ be the first vertex of $\so_i$ and let $\mathcal N_i$ be the set of neighbors of $v_i$ in $[n]\setminus (\mathcal A_i\cup \so_i)$. Set $\mathcal A_{i+1}= \mathcal A_i\cup \{v_i\}$.  Construct $\so_{i+1}$ from $\so_i$ by removing $v_i$ from the start of $\so_i$ and affixing the elements of $\mathcal N_i$ in increasing order to the start of $\so_i\setminus\{v_i\}$.  If now $\so_{i+1} = \emptyset$, add to it the lowest-labeled element of $[n] \setminus \mathcal A_{i+1}$ and set $c_{i+1} = c_i +1$.  Otherwise, set $c_{i+1} = c_i$.
\end{steplist} 
After step $n-1$, we have $\so_{n}=\emptyset$. We remark that this procedure defines a reordering $\{v_0,\ldots,v_{n-1}\}$ of $[n]$, and for any $G$, $\oDFS(G)$ always sets $v_0=1$. 
We refer to DFS run according to this rule as {\em ordered DFS}.
(The terminology {\em lexicographic-DFS} may seem natural; however, this has been 
given a slightly different definition by \citet{CoKr2008}.)  We note also that we increment the counter $c_i$ precisely when $v_i$ is the last vertex explored in a component, so that $(c_i, 0 \leq i < n)$ really does count components.  (We observe that if, in {\sc Step $i$}, we affix the elements of $\mathcal N_i$ to the \emph{end} of $\so_i \setminus \{v_i\}$ instead of the start, we obtain the \emph{breadth-first ordering} exploited by \citet{aldous97brownian}; we will discuss this further in Section \ref{sec:random_graph_limit}.)

The forest corresponding to $\oDFS(G)$ consists of all edges $xy$ such that for some $i\in\{0,\ldots,n-1\}$, $x$ is the vertex explored at step $i$ (so $x=v_{i}$) and $y \in N_i$.  We refer to this 
as the \emph{ordered depth-first search forest} for $G$.

For $i=0,1,\ldots,n-1$, let $X(i)=|\so_i\setminus \{v_i\}| - (c_i - 1)$. The process $(X(i), 0\le i< n)$ is called the \emph{depth-first walk} of the graph $G$. (The terminology ``walk" may seem odd here, but in the random context which is the focus of this paper, $(X(i), 0 \le i < n)$ turns out to be something like a random walk.) 

We will particularly make use of these ideas in the case where $G$ is connected.  In that situation, we have $c_i = 1$ for all $0 \leq i < n$ and so the algorithm is simpler.  (In particular, the set of open vertices only becomes empty at the end of the procedure.)
Furthermore, since $G$ is connected, the ordered depth-first search forest is now a tree, which we will refer to as the \emph{depth-first tree} and write $T(G)$.  The depth-first walk $(X(i), 0 \le i < n)$ now has the simpler representation $X(i) = |\so_i \setminus \{v_i\}| = |\so_i| - 1$ for $0 \leq i < n$ and can be interpreted as the number of vertices seen but not yet fully explored at step $i$ of the $\oDFS(G)$ procedure.  The following observation will be important later: the vertices in $\so_i \setminus \{v_i\}$ all lie at distance 1 from the path from the root to $v_i$.  Put differently, the vertices of $\so_i \setminus \{v_i\}$ are all younger siblings of ancestors of $v_i$.

We next consider running $\oDFS$ on a tree $T=([n],E)$. Of course, now the depth-first tree will be $T$ itself.  We define the {\em area} of a tree $T$ to be 
\[
a(T)=\sum_{i=1}^{n-1} X(i).
\] 
The area $a(T)$ corresponds to the number of integral points in $\{(i,j): 0\le i< n, 1\le j \le X(i)\}$ (see Figure~\ref{fig:dfs_walk}). For convenience, we also define a continuous interpolation: for $s\in [0,n-1]$ 
\[
X(s):=X(\fl{s})+(s-\fl s)\cdot(X(\fl s +1)-X(\fl s)).
\]
It will sometimes be convenient to see $X$ as a discrete excursion, and we extend $X$ to all of $\R^+$ by setting $X(s)=0$ for all $s\ge n-1$.
Say that an edge $uv \notin E$ is {\em permitted by} $\oDFS(T)$ if, in the $\oDFS(T)$ procedure run 
on $T$, at some stage of the process, $i$ and $j$ are both seen but neither is fully explored: there exists $i\in \{0,\dots,n-1\}$ such that $u,v\in \mathcal O_i$. The following lemma is then straightforward. 

\begin{figure}
\centering
\begin{picture}(400,150)
\put(0,17){\includegraphics[scale=.5]{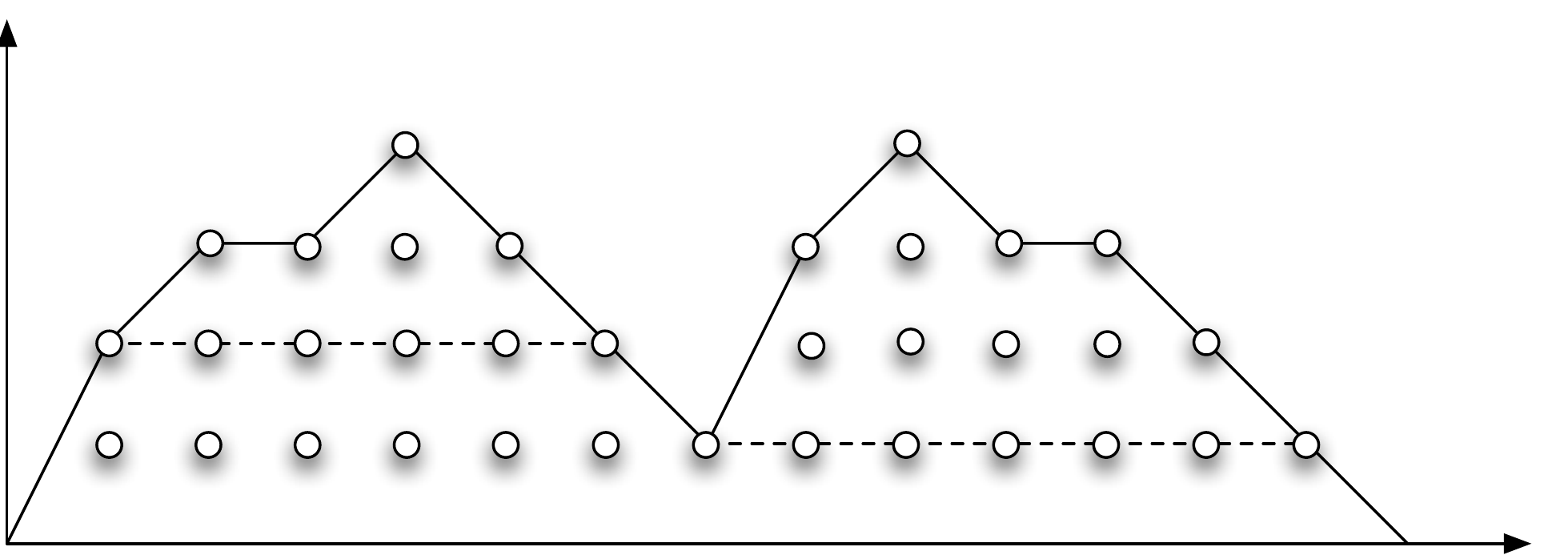}}
\put(300,10){\includegraphics[scale=.6]{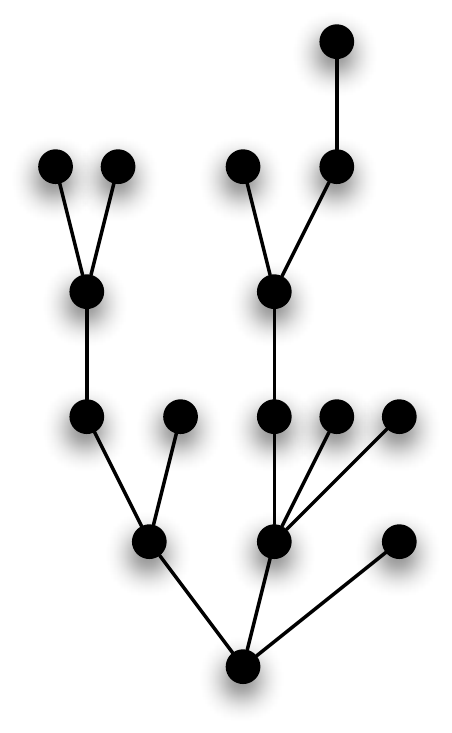}}
\put(-5,118){$X(s)$}
\put(280,15){$s$}
\put(330,15){$1$}
\put(312,45){$x_1$}
\put(334,45){$x_2$}
\put(356,45){$x_3$}
\end{picture}
\caption{\label{fig:dfs_walk}The (interpolated) depth-first walk $X(s)$ of the tree $T$ displayed on the right is shown. We have emphasized the integral points that contribute to the area $a(T)$. The portions of the walk above the dashed lines correspond to the $\oDFS(T_1)$ and $\oDFS(T_2)$ processes (started from $x_1$ and $x_2$ respectively).}
\end{figure}

\begin{lem}\label{area}
The number of edges permitted by $\oDFS(T)$ is precisely $a(T)$.
\end{lem}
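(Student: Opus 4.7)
The plan is to set up a bijection between edges permitted by $\oDFS(T)$ and pairs $(i,w)$ with $0\le i\le n-1$ and $w\in \so_i \setminus \{v_i\}$. Since the cardinality of the latter set is $\sum_{i=0}^{n-1} |\so_i \setminus \{v_i\}| = \sum_{i=1}^{n-1} X(i) = a(T)$ (using that $\so_0=(1)$ so $X(0)=0$), this will immediately give the lemma.

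The forward direction is essentially definitional. Given any permitted edge $\{u,v\}\notin E$, there exists some step $i$ at which $u,v \in \so_i$. Let $i^\star$ be the first step at which one of the two endpoints is explored, say $u=v_{i^\star}$; because $v$ has already been seen by step $i^\star$ but is not explored until strictly later, $v\in \so_{i^\star}\setminus \{v_{i^\star}\}$. I map the edge $\{u,v\}$ to the pair $(i^\star, v)$. This map is injective: if two permitted edges produced the same pair $(i^\star,w)$, then both edges would be $\{v_{i^\star},w\}$ and therefore equal. (The only other way two edges could collide as unordered pairs would force some $v_i$ to sit in $\so_j\setminus\{v_j\}$ with $j>i$, contradicting the fact that $v_i$ leaves the stack at time $i$.)

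For the reverse direction, I need to check that every pair $(i,w)$ with $w\in \so_i\setminus\{v_i\}$ is the image of some permitted edge, and for that it suffices to verify that $\{v_i,w\}\notin E$. This is the only substantive point, and it follows from the structural observation already made in the text: the elements of $\so_i \setminus \{v_i\}$ are precisely the younger siblings of the ancestors of $v_i$. So if $w\in \so_i\setminus\{v_i\}$, then the parent of $w$ is a (strict) ancestor $a$ of $v_i$ with $a\ne v_i$; moreover $w$ itself has not yet been explored, so it has no children in $T$ that have been seen. Hence $w$'s only tree neighbours are its parent $a$ and its (still unseen) children, none of which is $v_i$. Therefore $\{v_i,w\}$ is not a tree edge, and since both endpoints lie in $\so_i$ it is permitted.

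The only step requiring genuine care is the second one, namely the justification that $\{v_i, w\}\notin E$ when $w\in \so_i\setminus\{v_i\}$; once that is in hand, the two directions of the bijection fit together and the counting identity $a(T)=\sum_{i=1}^{n-1}X(i)$ finishes the argument.
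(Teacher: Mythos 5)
Your proof is correct, and it takes a genuinely different route from the paper's. The paper argues by induction on $n$: it decomposes $T$ at the root into the subtrees $T_1,\dots,T_i$ hanging off the root's children, counts the permitted edges incident to $\{x_1,\dots,x_i\}$ as $\sum_j (i-j)|T_j|$, and then matches the resulting recursion against the identity $X_T(\ell)=i-j+X_{T_j}(k)$ relating the walk on $T$ to the walks on its subtrees. You instead exhibit an explicit bijection between permitted edges and the pairs $(i,w)$ with $w\in\so_i\setminus\{v_i\}$, sending a permitted edge to the first time one of its endpoints is explored; the only nontrivial point, as you say, is that $\{v_i,w\}\notin E$ for $w\in\so_i\setminus\{v_i\}$, which follows from the observation (already recorded in the paper) that such $w$ are younger siblings of ancestors of $v_i$, so that $w$'s unique tree-parent is a strict ancestor of $v_i$ and $w$'s tree-children are as yet unseen. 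What your approach buys is that it proves not just the cardinality statement but the underlying correspondence itself: it is exactly the bijection between permitted edges and integral points under the depth-first walk that the paper invokes later when defining the marked depth-first walk (``By Lemma~\ref{area}, we may consider permitted edges to be in bijective correspondence with the integral points $(i,j)$\dots''), so your argument makes explicit something the paper only obtains at the level of counts and then reuses implicitly. The paper's induction is more mechanical to check but less informative about which edge corresponds to which lattice point.
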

\begin{proof}
We proceed by induction on $n$.  For $n=1,2$ the claim is clear. For $n \geq 3$, let $x_1,\ldots,x_i$ be the neighbors of  
$1$ in $T$, listed in increasing order, and let $T_1,\ldots,T_i$ be the trees containing 
$x_1,\ldots,x_i$, respectively, when vertex $1$ is removed from $T$. 

By its definition, the procedure $\oDFS(T)$ simply uncovers $x_1,\ldots,x_i$, 
then runs \oDFS($T_j$), for each $j=1,\ldots,i$, in this this order, but started (exceptionally) from $x_j$ in each case.
In particular, for each $j=1,\ldots,i$, each edge from $x_j$ to $x \in T_k$, $k \leq j$ 
is permitted by $\oDFS(T)$. Thus, the total number of edges with one endpoint in $\{x_1,\ldots,x_i\}$ permitted by $\oDFS(T)$ 
 is precisely 
\[
\sum_{j=1}^i (i-j) |T_{j}|. 
\]
Write $X_T$ and $X_{T_j}$, $1 \le j \le i$ in order to distinguish the depth-first walks on $T$ and on its subtrees.  By induction, it thus follows that the number of edges permitted by 
$\oDFS(T)$ is 
\[
\sum_{j=1}^i ((i-j) |T_{j}| + a(T_{j})) 	
 =  \sum_{j=1}^i \sum_{k=1}^{|T_j|} \pran{(i-j) + X_{T_{j}}(k)} 
 =  \sum_{j=1}^i \sum_{\ell=|T_1|+\ldots+|T_{j-1}|+1}^{|T_1|+\ldots+|T_{j}|} X_T(\ell) 
 = a(T),
\]
since, for $0\le k<|T_j|$ and $\ell=|T_1|+\dots + |T_{j-1}|+k$, the $\ell$-th step of the $\oDFS(T)$ process explores a vertex $v_\ell$ of the tree $T_j$ and $X_T(\ell)=i-j+X_{T_j}(k)$ (see Figure~\ref{fig:dfs_walk}).
\end{proof}

The next lemma characterizes the connected graphs $G$ which have a given depth-first tree. 
This lemma essentially appears in \cite{gessel79dfs}, though that paper uses slightly different terminology and a different canonical vertex ordering for $\oDFS$. 
The correspondence of \citet{Spencer1997} is a precise analog of our lemma when the tree extracted from the connected graph is constructed by breadth-first search rather than depth-first search.  
(Spencer used this correspondence to show that the so-called ``Wright constants'' \citep{wright1977ncs} are essentially factorial weightings of the moments of the area of a standard Brownian excursion.) 
\begin{lem}\label{permit}
Given any tree $T$ and connected graph $G$ on $[n]$, $T(G)=T$ if and only if $G$ can be obtained from $T$ by adding some subset of the edges permitted by $\oDFS(T)$. 
\end{lem}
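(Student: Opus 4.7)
The plan is to compare the runs of $\oDFS$ on $G$ and on $T$ step by step, leveraging the defining feature of the algorithm: at step $i$, an edge $v_iw$ becomes a tree edge (adding $w$ as a child of $v_i$) precisely when $w$ is unseen at that moment. The reformulation to keep in mind is that $uv$ is permitted by $\oDFS(T)$ exactly when $u$ and $v$ lie in a common open stack $\so_j$ for some $j$, together with the structural observation (already made in the text) that $v_i$ occupies the stack only from the step just after its parent is explored up through step $i$ itself.

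For the \emph{if} direction, write $F = E(G)\setminus E(T)$, a set of permitted edges by hypothesis. I would induct on $i$, showing that $(\so_i,\mathcal A_i,v_i)$ produced by $\oDFS(G)$ coincides with its counterpart for $\oDFS(T)$. The inductive step reduces to proving that for every edge $v_iw\in F$, the vertex $w$ is already seen at step $i$. Permittedness yields a step $j$ with $v_i,w\in\so_j$; since $v_i$ sits in the stack only between the step when its parent is explored and step $i$ itself, necessarily $j\le i$, so $w$ has been seen no later than step $i$. Hence the unseen-neighbor sets $\mathcal N_i$ in the two runs coincide, no edge of $F$ is ever used to build a tree edge in $\oDFS(G)$, and $T(G)=T$.

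For the \emph{only if} direction, assume $T(G)=T$ (so the two runs of $\oDFS$ agree) and fix $uv\in E(G)\setminus E(T)$. Let $i$ be the later of the two exploration steps of $u$ and $v$; without loss of generality, $u=v_i$. Since $uv$ is not a tree edge of $\oDFS(G)$, $v$ must be seen at step $i$, so $v\in\so_i\cup\mathcal A_i$. If $v\in\so_i$, then $u,v\in\so_i$ and $uv$ is permitted. Otherwise $v=v_j$ for some $j<i$, and I would argue as follows: had $u$ been unseen at step $j$, the edge $v_ju$ would have made $u$ a child of $v_j$ in $\oDFS(G)$, forcing $uv$ to be a tree edge — contradicting $uv\notin E(T)$. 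So $u$ is seen at step $j$; since $u$ is only explored at step $i>j$, we have $u\in\so_j$. Combined with $v=v_j\in\so_j$ this gives $u,v\in\so_j$, so $uv$ is permitted.

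The one delicate point is the subcase $v\in\mathcal A_i$ in the only-if direction: to identify a step at which $u$ and $v$ appear \emph{simultaneously} in the stack, one must look backward to the moment when $v$ itself was being explored, rather than forward from step $i$; the hypothesis $uv\notin E(T)$ is exactly what forces $u$ to already lie in $\so_j$ at that earlier step. Everything else is a direct induction on $\oDFS$ steps.
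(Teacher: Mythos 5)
Your proof is correct and rests on the same core mechanism as the paper's: comparing the runs of $\oDFS$ on $G$ and on $T$, and exploiting the fact that a surplus edge $v_iw$ is harmless precisely when $w$ is already seen by step $i$, which is exactly what permittedness guarantees (since $v_i$ leaves the stack at step $i$, any common stack must occur at some $j\le i$). The organization differs in two minor ways. For the ``if'' direction you induct on the step index $i$ of the algorithm and handle all surplus edges at once, whereas the paper inducts on the number of surplus edges, peeling off the lexicographically least one each time; both work. For the ``only if'' direction you argue directly from $T(G)=T$, while the paper argues the contrapositive starting from the lexicographically least non-permitted edge. Your version does lean on one assertion that deserves a line of justification and that the paper's device makes unnecessary: the parenthetical claim that $T(G)=T$ forces the runs of $\oDFS(G)$ and $\oDFS(T)$ to agree \emph{in full}. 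This is true --- the run of $\oDFS$ on a connected graph $H$ is determined by the rooted labeled tree $T(H)$, because at every step $\mathcal N_i$ is precisely the set of children of $v_i$ in $T(H)$ rooted at $1$, and the evolution of $(\so_i,\mathcal A_i)$ depends only on these sets --- but you use it essentially (your permittedness conclusions refer to the stacks of $\oDFS(T)$ while your case analysis takes place in $\oDFS(G)$), so it should be stated and proved rather than asserted. The paper sidesteps this by needing agreement of the runs only prior to the step at which the chosen least edge first matters, which follows from minimality. One further small remark: since you chose $i$ to be the \emph{later} of the two exploration times, the other endpoint automatically lies in $\mathcal A_i$, so your subcase $v\in\so_i$ is vacuous; the backward-looking argument for $v\in\mathcal A_i$, which you correctly identify as the delicate point, is the whole content of that direction and is handled correctly.
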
 
\begin{proof} 
First, if $T(G)=T$ then $T$ is certainly a subgraph of $G$. Next, suppose that $G$ can be obtained from $T$ by adding a subset of the edges permitted by $\oDFS(T)$. We proceed 
by induction on $k$, the number of edges of $G$ not in $T$. The case $k=0$ is clear, so suppose $k \geq 1$ and 
let $v_0,\ldots,v_{n-1}$ be the ordering of $[n]$ obtained by running $\oDFS(T)$. Let $v_iv_j$ be the lexicographically least edge of $G$ not in $T$ (written so that $i < j$). 

Now, vertex $v_i$ is explored at step $i$ of $\oDFS(T)$. By our choice of $v_iv_j$, prior to step $i$ the behavior of $\oDFS(T)$ and $\oDFS(G)$ is identical, so in particular 
$\so_{i}(T)=\so_{i}(G)$. Furthermore, since $v_iv_j$ is permitted by $\oDFS(T)$, and $v_j$ is explored after $v_i$, we must have $v_j \in \so_i(T)=\so_i(G)$. 
Thus, $v_iv_j \notin T(G)$, and so $T(G)=T(G\setminus \{v_iv_j\})$. The ``if'' part of the lemma follows by induction. 

Finally, suppose that $G$ contains an edge not permitted by $\oDFS(T)$, and let $v_iv_j$ be the lexicographically least such edge (in the order given by $\oDFS(T)$). 
Then as before, the behavior of $\oDFS(T)$ and $\oDFS(G)$ is necessarily identical prior to step $i$, so in particular $v_j \notin A_i(T)=A_i(G)$. 
Furthermore, since $v_iv_j$ is not permitted by $\oDFS(T)$, $v_j \notin \so_i(T)=\so_i(G)$. But $v_iv_j \in E(G)$, so we will have $v_j \in N_i(G)$ and thus $v_iv_j \in T(G)$. 
Hence, $T(G)\neq T$, which proves the ``only if'' part of the lemma.
\end{proof}

Let $\mathbb{T}_{[n]}$ denote the set of trees on $[n]$ and write $\mathbb G_T$ for the set of connected graphs $G$ with $T(G)=T$. Then it follows from Lemmas~\ref{area} and \ref{permit} that
\[
\{\mathbb G_T:~T \in \mathbb{T}_{[n]}\}
\]
is a partition of the connected graphs on $[n]$, and that the size of $\mathbb G_T$ is $2^{a(T)}$. Recall that the \emph{surplus} of a connected graph $G$ is the minimum number of edges that must be removed in order to obtain a tree, and call it $s(G)$. Then,
for any $k \in \Z^+$, the number of graphs in $\mathbb G_T$ with surplus $k$ is precisely ${a(T)\choose k}$. 
(We interpret ${a \choose k}$ as $0$ if $k > a$ throughout the paper.)  

We use these ideas to give a method of generating a
connected component on a fixed number $m$ of vertices (which, without loss of generality, we will label by $[m]$). Fix $p \in (0,1)$.  First pick a labeled tree $\tilde{T}_m^{p}$ on $[m]$ in such a way that
$\Probc{\tilde{T}_m^{p} = T} \propto (1 - p)^{-a(T)}$.  Now add to $\tilde{T}_m^{p}$ each of the
$a(\tilde{T}_m^{p})$ edges permitted by  $\oDFS(\tilde{T}_m^{p})$ independently with probability $p$,
so that, given $\tilde{T}_m$, we add a $\mathrm{Binomial}(a(\tilde{T}_m), p)$ number of surplus edges.  Call the graph thus
generated $\tilde{G}_m^p$. 
Let $G_m^p$ be a connected component of $G(n,p)$ conditioned to have size (number of vertices) $m$.  
  
\begin{prop}\label{prop:dist_conn_comp}
For any $p \in (0,1)$ and $m \leq n$, $\tilde{G}_m^p$ has the same distribution as $G_m^p$, a connected component of $G(n,p)$ conditioned to have size $m$ (and vertices labeled by $[m]$). 
\end{prop}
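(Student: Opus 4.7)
The plan is to directly compute and compare the two distributions on connected graphs on $[m]$, using Lemma~\ref{permit} as the key structural tool. The bijective picture from Lemma~\ref{permit} tells us that every connected graph $G$ on $[m]$ arises uniquely from its depth-first tree $T = T(G)$ by adjoining precisely the $s(G)$ surplus edges, and those surplus edges must form an $s(G)$-subset of the $a(T)$ edges permitted by $\oDFS(T)$. This uniqueness is what makes the probability computations on both sides clean.

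First I will compute $\Prob{\tilde{G}_m^p = G}$ for an arbitrary connected graph $G$ on $[m]$ with surplus $k = s(G)$. Writing $T = T(G)$, by construction
\[
\Prob{\tilde{G}_m^p = G} = \Prob{\tilde{T}_m^p = T}\cdot p^k(1-p)^{a(T)-k} \;\propto\; (1-p)^{-a(T)} \cdot p^k(1-p)^{a(T)-k} = \left(\frac{p}{1-p}\right)^{\!k},
\]
where the proportionality constant depends only on $m$ and $p$ (through the normalization of $\tilde T_m^p$), not on $G$. Crucially the $a(T)$ exponents cancel, so the right-hand side depends on $G$ only through $s(G)$.

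Second I will compute $\Prob{G_m^p = G}$. Fix a connected graph $G$ on $[m]$. The event that a particular labeled graph $G$ is a connected component of $G(n,p)$ (with vertex set exactly $[m]$) is the event that all edges of $G$ are present, all non-edges of $G$ within $[m]$ are absent, and all $m(n-m)$ potential edges between $[m]$ and $[n]\setminus[m]$ are absent. This has probability
\[
p^{|E(G)|}(1-p)^{\binom{m}{2}-|E(G)|}(1-p)^{m(n-m)}.
\]
All factors except $p^{|E(G)|}(1-p)^{-|E(G)|}$ depend only on $m$, $n$ and $p$. Conditioning on the event that some component has vertex set $[m]$ (equivalently, relabeling any component of size $m$ to $[m]$ and conditioning on its existence), and using $|E(G)| = m-1+s(G)$, we get
\[
\Prob{G_m^p = G} \;\propto\; \left(\frac{p}{1-p}\right)^{\!s(G)}.
\]

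Finally, since the two distributions on connected graphs on $[m]$ are proportional to the same function $G \mapsto (p/(1-p))^{s(G)}$ and both are probability measures, they must be equal. The partition statement $\{\mathbb{G}_T : T \in \mathbb{T}_{[m]}\}$ into classes of size $2^{a(T)}$ given earlier is implicitly what guarantees that $\tilde{T}_m^p$ is well-defined (its normalizing constant is finite because $\mathbb{T}_{[m]}$ is) and that $\tilde{G}_m^p$ really ranges over all connected graphs on $[m]$. The only conceptual subtlety — the main point to get right — is the bookkeeping in the second step, namely that the conditioning factor involving edges from $[m]$ to $[n]\setminus[m]$ and the non-edges within $[m]$ combine to leave exactly a factor $(p/(1-p))^{|E(G)|}$, and that $|E(G)|$ differs from $s(G)$ only by the constant $m-1$, which can be absorbed into the normalization.
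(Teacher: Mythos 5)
Your proposal is correct and follows essentially the same route as the paper's proof: both sides are computed up to a normalizing constant and shown to be proportional to $(p/(1-p))^{s(G)}$, with the $a(T)$ exponents cancelling in the tilted construction. Your treatment of the conditioning for $G_m^p$ (explicitly tracking the non-edges within $[m]$ and between $[m]$ and $[n]\setminus[m]$) is just a more detailed version of the paper's observation that $\Prob{G_m^p=G}\propto\Prob{G(m,p)=G}$.
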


\begin{proof}For a graph $G$ on $[m]$, we write $s(G)=|E(G)|-(m-1)$, so if $G$ is connected
then $s(G)$ is its surplus. Consider $G_m^p$ as relabeled with $[m]$ in the unique way that conserves the ordering of vertices. It is then immediate from the definition of $G(n,p)$ that, for a connected graph $G$ on $[m]$,
\[
\p{G_m^p=G} \propto \p{G(m,p)=G}=p^{m-1+s(G)} (1-p)^{\binom m2-m+1 -s(G)}\propto p^{s(G)} (1-p)^{-s(G)}.
\]
Also, by its definition,
\[
\pc{\tilde{G}_{m}^p=G} \propto (1-p)^{-a(T)}
\Cprobc{\tilde{G}_{m}^p=G}{T(G)=T} = (1-p)^{-a(T)}
p^{s(G)}(1-p)^{a(T)-s(G)},
\]
which completes the proof.
\end{proof}

\begin{cor}
Conditional on $s(\tilde G_m^p) = k\ge 0$, $\tilde{G}_m^p$ is a uniformly chosen connected graph on $[m]$ with $m+k-1$ edges, irrespective of the value of $p\in (0,1)$.
\end{cor}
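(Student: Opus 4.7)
The plan is to read off the conclusion from the normalized density already computed in the proof of Proposition \ref{prop:dist_conn_comp}. There, for any connected graph $G$ on $[m]$, we established
\[
\p{\tilde{G}_m^p = G} \propto p^{s(G)}(1-p)^{-s(G)}.
\]
The crucial point is that the right-hand side depends on $G$ only through its surplus $s(G)$. So the first step is to fix $k \ge 0$ and restrict attention to the set $\mathbb{K}_{m,k}$ of connected graphs on $[m]$ with exactly $m+k-1$ edges (equivalently, surplus $k$).

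Next, I would compute the conditional distribution directly. For any $G \in \mathbb{K}_{m,k}$,
\[
\p{\tilde{G}_m^p = G \mid s(\tilde{G}_m^p) = k} \;=\; \frac{\p{\tilde{G}_m^p = G}}{\sum_{G' \in \mathbb{K}_{m,k}} \p{\tilde{G}_m^p = G'}} \;=\; \frac{p^{k}(1-p)^{-k}}{|\mathbb{K}_{m,k}| \cdot p^{k}(1-p)^{-k}} \;=\; \frac{1}{|\mathbb{K}_{m,k}|},
\]
since in both numerator and denominator (up to the same normalizing constant coming from Proposition \ref{prop:dist_conn_comp}) the factor $p^{k}(1-p)^{-k}$ is common to every summand and cancels.

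This shows both claims simultaneously: the conditional law is uniform on $\mathbb{K}_{m,k}$, and the factor of $p$ has disappeared, so the law is independent of $p \in (0,1)$. There is really no obstacle here: the whole content is that the Radon--Nikodym factor between $\tilde{G}_m^p$ and a uniform connected graph of surplus $k$ is a function of the surplus alone, a fact which is already visible in the proof of Proposition \ref{prop:dist_conn_comp}.
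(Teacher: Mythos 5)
Your argument is correct and is exactly the one the paper intends: the corollary is stated without proof precisely because, as you observe, the law $\p{\tilde{G}_m^p = G} \propto p^{s(G)}(1-p)^{-s(G)}$ established in the proof of Proposition~\ref{prop:dist_conn_comp} depends on $G$ only through its surplus, so conditioning on $s(\tilde G_m^p)=k$ yields the uniform distribution on connected graphs with $m+k-1$ edges and the $p$-dependent factor cancels.
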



\section{Tilted trees and tilted excursions}
\label{sec:tilted_trees}

In the introduction, we observed that twice the standard Brownian excursion appears as the limit of the height process $(H^n(i), 0 \le i \le n-1)$ and of the contour process $(C^n(t), 0 \le t \le 2(n-1))$ of a uniform random tree on $[n]$ 
(see, e.g., \cite{aldous93crt3, legall05survey}).
Let $(X^n(i), 0 \le i \le n-1)$ be the corresponding depth-first walk.  Then
\[
\frac{1}{\sqrt{n}} (X^n(\fl{nt}), 0 \leq t \leq 1) \convdist (e(t), 0 \le t \le 1),
\]
as $n \to \infty$, with convergence in $\mathbb D([0,1],\R^+)$ equipped with the Skorohod topology (see \citet{MaMo2003}).  (Note that the results in \cite{MaMo2003} are stated in the more general situation of an ordered Galton--Watson tree with an arbitrary finite-variance offspring distribution, conditioned to have $n$ vertices.  If the offspring distribution is taken to be Poisson mean $1$ then the conditioned tree has precisely the metric structure of a uniform labeled tree.)  It is no coincidence that the limits of these three processes should be the same: they are not only the same in distribution, but are actually the \emph{same} excursion.

\begin{thm}[\citet{MaMo2003}]\label{mamothm}
As $n \to \infty$,
\[
\frac{1}{\sqrt{n}}(X^n(\fl{n \,\cdot}), H^n(\fl{n \,\cdot}, C^n(n \,\cdot)) \convdist (e(\,\cdot\,), 2e(\,\cdot\,), 2e(\,\cdot\,/2)).
\]
\end{thm}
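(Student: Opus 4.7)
The plan is to reduce this statement to classical results on conditioned Galton--Watson trees, and then combine those with the individual convergences already recorded in Theorem~\ref{thm:aldousheightcontour}. The first step is a distributional identification. A uniform random labeled tree on $[n]$, rooted at vertex $1$, has the same rooted ordered-shape distribution as a Poisson$(1)$ Galton--Watson tree conditioned to have total progeny $n$, once we replace the ``smallest-label-first'' rule of $\oDFS$ by a uniformly random ordering of the children at each vertex. Since $X^n$, $H^n$, and $C^n$ are functionals of the rooted ordered shape only, this replacement does not alter their joint law, so it suffices to prove the theorem in the conditioned Galton--Watson setting.

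Second, I would establish the marginal limit of the depth-first walk. The $\oDFS$ dynamics give $X^n(i+1) - X^n(i) = c_{v_i} - 1$, where $c_{v_i}$ is the number of children of the $i$-th explored vertex. Under the Galton--Watson reduction, $X^n$ is therefore a random walk with Poisson$(1)-1$ increments (mean zero, variance one) conditioned to first hit $-1$ at time $n$. A standard bridge-to-excursion invariance principle of Kaigh type (see Le Gall's survey \cite{legall05survey}) yields $n^{-1/2}X^n(\fl{n\cdot}) \convdist e(\cdot)$. The marginal limits of $H^n$ and $C^n$ are exactly Theorem~\ref{thm:aldousheightcontour}.

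Third, and this is the main obstacle, I would upgrade the two marginals to a joint convergence $n^{-1/2}(X^n(\fl{n\cdot}), H^n(\fl{n\cdot})) \convdist (e(\cdot), 2e(\cdot))$. The key tool is the Le Gall--Le Jan representation of the height process as a deterministic functional of the \L ukasiewicz walk,
\[
H^n(i) = \#\bigl\{0 \le j < i : X^n(j) = \min_{j \le k \le i} X^n(k)\bigr\},
\]
together with the Duquesne--Le Gall result that for a critical offspring distribution of variance $\sigma^2$ the rescaled walk tends to $\sigma$ times a Brownian excursion and the rescaled height process tends to $(2/\sigma)$ times the \emph{same} excursion. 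In the Poisson$(1)$ case $\sigma = 1$, so the limits of $n^{-1/2}X^n$ and $n^{-1/2}H^n$ are expressible through one common excursion $e$, and equal $e$ and $2e$ respectively. The technical heart is showing that the count of ``consecutive running minima'' appearing in the formula above behaves, after rescaling, like a deterministic multiple of the walk, uniformly along the trajectory; this requires a microscopic ergodic-type control on the increments of $X^n$, and it is here that one really uses the conditioning. Tightness of $n^{-1/2}H^n$ is already provided by Theorem~\ref{thm:aldousheightcontour}, which keeps the subsequential argument manageable.

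Finally, the contour convergence follows from a deterministic time change. If $b^n(i)$ denotes the first time the contour visits the $i$-th explored vertex, an easy induction on the tree structure gives
\[
b^n(i) = 2i - H^n(i), \qquad C^n(b^n(i)) = H^n(i).
\]
Since $H^n$ is $O(\sqrt{n})$, $b^n(\fl{nt})/n \to 2t$ uniformly in probability, so the time change is asymptotically deterministic and continuous. Composing with the joint convergence of $(X^n, H^n)$ and applying the continuous mapping theorem in the Skorohod topology yields $n^{-1/2}C^n(n\cdot) \convdist 2e(\cdot/2)$ jointly with the convergences of $X^n$ and $H^n$, completing the proof.
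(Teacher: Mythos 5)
The paper does not actually prove Theorem~\ref{mamothm}: it is imported wholesale from \cite{MaMo2003}, so there is no internal argument to compare yours against. Your outline is a faithful reconstruction of how the result is established in that literature: the reduction from uniform labeled trees to conditioned Poisson$(1)$ Galton--Watson trees is exactly the identification the paper alludes to just before the theorem statement; the observation that $X^n$ is the \L ukasiewicz path (increments ``number of children minus one'') is correct; the formula $H^n(i) = \#\{j < i : X^n(j) = \min_{j \le k \le i} X^n(k)\}$ is the right Le Gall--Le Jan representation; and the time change $b^n(i) = 2i - H^n(i)$, $C^n(b^n(i)) = H^n(i)$ is precisely the relation the paper itself uses later (as $K(i)$ in the proof of Lemma~\ref{lem:sl_marked_height}, with the same citation to \cite{duquesnelegall02randomtrees}).

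The one substantive caveat is that the decisive step --- upgrading the two marginal limits to joint convergence by showing the running-minima count tracks $2X^n$ uniformly along the path --- is exactly the content of the theorem being proved (it is, in quantitative form, Theorem~\ref{thm:MaMobound}, which the paper quotes immediately afterwards). You correctly identify this as ``the technical heart'' but then defer it to Duquesne--Le Gall rather than carrying it out, so your proposal is a reduction to the cited literature with the correct scaffolding rather than a self-contained proof; given that the paper does no more than cite \cite{MaMo2003}, that is a reasonable level of resolution, but you should be explicit that nothing new is proved at that step. A second, minor point: deducing the contour limit from the height limit requires not only that $b^n(\fl{nt})/n \to 2t$ uniformly but also a bound on the oscillation of $C^n$ on each interval $[b^n(i), b^n(i+1)]$ (it is at most $|H^n(i+1)-H^n(i)|+1$, controlled by the modulus of continuity of the limit); this is standard but should be stated, since the time change alone only controls $C^n$ at the times $b^n(i)$.
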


We will make considerable use of this fact.  
An essential tool in what follows will be the following estimate on the distance between the depth-first walk and the height process (Theorem 3 of \cite{MaMo2003}).

\begin{thm}[\citet{MaMo2003}] \label{thm:MaMobound}
For any $\nu > 0$, there exist $n_{\nu}$ and $\gamma > 0$ such that, for all $n \geq n_{\nu}$,
\[
\Prob{\sup_{0 \leq i < n} \left| X^n(i) - \frac{H^n(i)}{2} \right| \geq n^{1/4 + \nu}} \leq e^{-\gamma n^\nu}.
\]
\end{thm}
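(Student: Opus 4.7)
The plan is to exploit the explicit combinatorial description of the depth-first stack. At step $i$, if $v_i$ is the vertex currently being explored with ancestors $\rho = a_0, a_1, \ldots, a_{H^n(i)} = v_i$, then $\mathcal{O}_i \setminus \{v_i\}$ consists of exactly those children of some $a_{j-1}$ whose labels exceed the label of $a_j$: they were pushed onto the stack when $a_{j-1}$ was explored and have not yet been popped. Letting $Y_j$ denote the number of such ``younger siblings'' of $a_j$, we obtain the key identity $X^n(i) = \sum_{j=1}^{H^n(i)} Y_j$, so
\[
X^n(i) - \frac{H^n(i)}{2} = \sum_{j=1}^{H^n(i)} \left(Y_j - \frac{1}{2}\right).
\]

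I would then identify the uniform labeled tree with a critical Poisson($1$) Galton--Watson tree conditioned on having $n$ vertices (carrying an independent uniform labeling). By the spine decomposition, the offspring counts $Z_{j-1}$ of the ancestors $a_{j-1}$ of a typical vertex are, before conditioning on total size, i.i.d.\ copies of a size-biased Poisson($1$), which equals $1$ plus an independent Poisson($1$); by exchangeability of the labeling, the rank of $a_j$ among its $Z_{j-1}$ siblings is uniform, so $Y_j \mid Z_{j-1}$ is uniform on $\{0, 1, \ldots, Z_{j-1} - 1\}$, with conditional mean $(Z_{j-1}-1)/2$ and unconditional mean $1/2$. Each $Y_j$ has a sub-exponential tail (inherited from the Poisson distribution), so a Bernstein-type inequality yields, for any $h$ and moderate $t$, a pointwise bound of the form
\[
\Prob{\Big|\textstyle\sum_{j=1}^h (Y_j - 1/2)\Big| > t} \leq C e^{-c t^2/h}.
\]

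Combining this with the standard maximal-height estimate $\Prob{\max_i H^n(i) > K \sqrt{n}} \leq e^{-\gamma n^\nu}$ (for $K$ large enough) for conditioned critical Galton--Watson trees, the choice $t = n^{1/4+\nu}$ gives an $e^{-\gamma' n^\nu}$ pointwise bound for $|X^n(i) - H^n(i)/2|$. A union bound over the $n$ choices of $i$ then delivers the required uniform statement, since $n \cdot e^{-\gamma' n^\nu}$ remains of the form $e^{-\gamma'' n^\nu}$ after shrinking the constant $\gamma''$ slightly.

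The main obstacles are twofold. First, the conditioning on tree size $n$ breaks the independence of the $Y_j$'s along the spine; this is handled via the Otter--Dwass local estimate $\Prob{|T| = n} \asymp n^{-3/2}$, which permits transferring unconditional exponential tails to the conditioned setting at the cost of only a polynomial factor, easily absorbed into the $n^{\nu}$ slack. Second and more subtly, the naive union bound over $i$ ignores the strong dependence between consecutive spines, which typically share most of their ancestors. A cleaner resolution is to reformulate the problem as a functional bound on the entire random tree: view the vector $(X^n(i) - H^n(i)/2)_{0 \le i < n}$ as a sum of locally-supported contributions from each vertex (each ancestor contributes $Y_j - 1/2$ to a contiguous range of indices $i$), and use exchangeability of the uniform labeling together with the global height bound to prove a single global concentration inequality rather than $n$ correlated pointwise ones. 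This global approach is, I expect, the real heart of the argument.
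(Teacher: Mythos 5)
First, a point of context: the paper does not prove this statement at all --- it is imported verbatim as Theorem~3 of Marckert and Mokkadem \cite{MaMo2003}, so there is no in-paper proof to compare against. Your strategy (the ancestral-sum identity $X^n(i)=\sum_{j\le H^n(i)}Y_j$, exponential concentration of the centred sum, transfer from the unconditioned to the conditioned tree via the local estimate $\Prob{|T|=n}\asymp n^{-3/2}$, and a union bound absorbed into the $n^{\nu}$ slack) is in fact the same general strategy as the cited source, and the identity itself, the mean computation $\E{Y_j}=1/2$, and the Otter--Dwass transfer step are all correct.

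There is, however, a genuine gap at the step you yourself flag as ``the real heart of the argument,'' and it is not quite where you locate it. The union bound is not the problem: $\Prob{\cup_i A_i}\le\sum_i\Prob{A_i}$ needs no independence, and $n\,e^{-\gamma' n^{\nu}}$ is indeed still $e^{-\gamma'' n^{\nu}}$. The problem is the pointwise bound for a fixed $i$. The spine decomposition you invoke describes the ancestry of a \emph{uniformly chosen vertex} (equivalently, the size-biased tree); it does not describe the ancestry of the vertex occupying a \emph{fixed position $i$ in the depth-first order}. Worse, there is a circularity: which vertex is $v_i$ is itself determined by the labels, so you cannot first fix $v_i$ and then declare the sibling ranks along its ancestral line to be independent uniforms --- the event $\{v=v_i\}$ is not independent of those ranks. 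Even after repairing this by summing over labeled vertices $v$ rather than DFS indices, the number of terms $H^n(i)$ is a function of the same randomness as the $Y_j$, so the fixed-length Bernstein bound does not apply directly; one needs either an exponential maximal inequality over all depths $h\le K\sqrt n$ with a genuinely i.i.d.\ (or martingale) structure established first, or --- as in Marckert--Mokkadem --- one should work directly with the {\L}ukasiewicz path, observing that $X^n(i)$ \emph{is} that path and that the ancestors of $v_i$ are its weak descending ladder epochs read backwards from $i$, so that $X^n(i)-H^n(i)/2$ becomes a sum over ladder epochs of an i.i.d.\ sequence, to which exponential bounds apply. Your closing paragraph gestures at ``a single global concentration inequality'' built from ``locally-supported contributions'' but supplies no construction, so as written the proposal asserts rather than proves the key concentration step.
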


In this section, we focus on understanding the distribution of the \emph{tilted trees} $\tilde T_m^p$. Note that in the case of critical $G(n,p)$ the largest components have size $m$ of order $n^{2/3}$ and $p\sim 1/n$, so that we shall take $p=p(m)$ of order $m^{-3/2}$. We write $\tilde X^m=(\tilde X^m(i), 0\le i<m)$ and $\tilde H^m=(\tilde H^m(i), 0\le i<n)$ for the depth-first walk and height process of $\tilde T_m^p$ (in $\oDFS$ order). Although it is usually impossible to reconstruct the labelling from either $\tilde X^m$ or $\tilde H^m$, the structure of the trees (as unlabeled rooted ordered trees) can be recovered from either $\tilde X^m$ or $\tilde H^m$. We start with a description of the scaling limit of these discrete excursions, which is closely related to the scaling limit of the corresponding processes, $X^m$ and $H^m$, for uniform trees.

Write $\mathcal{E}$ for the space of excursions; that is,
\begin{equation}\label{excursionspace}
\mathcal{E} = \{f \in C(\R^+,\R^+): f(0) = 0, \exists \ \sigma < \infty \text{ such that } f(x) > 0 \ \forall \ x \in (0,\sigma) ~\mbox{and}~ f(x) = 0 \ \forall x \ge \sigma\}.
\end{equation}
Given a function $f \in C([0,\sigma],\R^+)$ with $f(0)=f(\sigma)=0$ and $f(x)>0 \ \forall \ x \in (0,\sigma)$, we will abuse notation by identifying $f$ with the function $g \in \mathcal E$ which has 
$g(x)=f(x)$, $0 \leq x < \sigma$ and $g(x)=0$, $x \geq \sigma$. 
The distance of interest for us on $\mathcal E$ is given by the supremum norm: for a function $f \in C(\R^+,\R)$, we write $\|f\|=\sup_{s \ge 0} |f(s)|$.
Let $e^{(\sigma)}=(e^{(\sigma)}(s), 0 \leq s \leq \sigma)$ be a Brownian excursion of length $\sigma>0$.  We omit the superscript in the case of a standard Brownian excursion $(e(s),  0\le s\le 1)$. Note that, by Brownian scaling, we have
\[
(e^{(\sigma)}, 0\le s\le \sigma)\equidist (\sqrt \sigma \cdot e(s/\sigma), 0\le s\le \sigma).
\]
Then the \emph{area under the excursion} $e^{(\sigma)}$ is
\[
A(\sigma) := \int_0^\sigma e^{(\sigma)}(s) ds \equidist \sigma^{3/2} \int_0^1 e(s) ds.
\] 
The random variable $A(2)$ has the so-called \emph{Airy distribution}. This distribution has a rather complicated form but, for our purposes, it will suffice to note that its Laplace transform, $\phi: \mathbb{C} \to \mathbb{C}$ given by  $\phi(z) = \Ec{\exp(-z \int_0^1 e(x) dx )}$, is an entire function (see Janson \cite{janson07excursionarea} for details); in particular, it is finite for $z = -1$. 
For $\sigma>0$, we define the \emph{tilted excursion} of length $\sigma$, $\tilde{e}^{(\sigma)} = (\tilde{e}^{(\sigma)}(s), 0 \leq s \leq \sigma)\in \mathcal E$, to be an excursion whose distribution is characterized by
\begin{equation} \label{eqn:changemeas}
\Prob{\tilde{e}^{(\sigma)} \in \mathcal{B}} = \frac{\E{\I{e^{(\sigma)} \in \mathcal{B}} \exp \left( \int_0^{\sigma} e^{(\sigma)}(s) ds\right)}}{\E{\exp\left(\int_0^{\sigma} e^{(\sigma)}(s) ds\right)}},
\end{equation}
for $\mathcal{B} \subset \mathcal{E}$ a Borel set.  Here, the Borel sigma-algebra on $\mathcal{E}$ is that generated by open sets in the supremum norm $\|\cdot\|$. (Equation (\ref{eqn:changemeas}) gives a well-defined distribution since $\phi(-1) < \infty$.) As with the standard Brownian excursion, we will omit the superscript whenever the length of the tilted excursion is 1.
As previously, let $\mathbb{D}([0,\sigma], \R^+)$ be the space of non-negative c\`adl\`ag paths on $[0,\sigma]$, equipped with the Skorohod topology.

\begin{thm}\label{thm:conv_marked_tildedX}
Suppose that $p=p(m)$ is such that $m p^{2/3} \to \sigma$ as $m\to\infty$. Then, as $m\to\infty$, 
\[
((m/\sigma)^{-1/2} \tilde{X}^m(\fl{(m/\sigma)t}), 0 \leq t \leq \sigma) \convdist (\tilde{e}^{(\sigma)}(t), 0\le t\le \sigma),
\]
in $\mathbb D([0,\sigma], \R^+)$.
\end{thm}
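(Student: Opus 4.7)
The claim is best viewed as a change-of-measure (Radon--Nikodym) statement. Since $a(T)$ depends on a labeled tree $T$ only through its depth-first walk, the tilting defining $\tilde T_m^p$ transfers directly to a tilting on depth-first walks:
\[
\Prob{\tilde X^m \in B} = \frac{\E{\I{X^m \in B}(1-p)^{-a(X^m)}}}{\E{(1-p)^{-a(X^m)}}},
\]
where $X^m$ is the depth-first walk of a uniform random labeled tree on $[m]$ and $a(X^m) = \sum_{i=1}^{m-1} X^m(i)$. The problem reduces to showing that this tilted law, after rescaling, converges to $\tilde e^{(\sigma)}$.

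The first task is to identify the limit of the tilting factor. Set $Y^m(t) := (m/\sigma)^{-1/2} X^m(\fl{(m/\sigma)t})$ for $t \in [0,\sigma]$, so that, by Brownian scaling applied to Theorem \ref{mamothm}, $Y^m \convdist e^{(\sigma)}$ in $\mathbb D([0,\sigma],\R^+)$. A Riemann-sum identity gives
\[
a(X^m) = (m/\sigma)^{1/2}\sum_{i=1}^{m-1} Y^m(i\sigma/m) = (m/\sigma)^{3/2}\!\int_0^\sigma Y^m(t)\,dt + O\!\left((m/\sigma)^{1/2}\sup_t Y^m(t)\right),
\]
and, using $-\log(1-p)=p(1+o(1))$ together with the hypothesis $mp^{2/3}\to\sigma$ (equivalently $p(m/\sigma)^{3/2}\to 1$), the tilting factor $U_m := (1-p)^{-a(X^m)}$ converges in distribution, jointly with $Y^m$, to $\Psi := \exp(\int_0^\sigma e^{(\sigma)}(t)\,dt)$.

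The second task is the standard change-of-measure argument. For any bounded continuous $F$ on $\mathbb D([0,\sigma],\R^+)$,
\[
\E{F(\tilde Y^m)} = \frac{\E{F(Y^m)\,U_m}}{\E{U_m}},
\]
so, provided $\{U_m\}_{m \ge 1}$ is uniformly integrable, the joint convergence of $(F(Y^m),U_m)$ yields
\[
\E{F(\tilde Y^m)} \to \frac{\E{F(e^{(\sigma)})\,\Psi}}{\E{\Psi}} = \E{F(\tilde e^{(\sigma)})}
\]
by definition of $\tilde e^{(\sigma)}$, which is precisely the asserted $\tilde Y^m \convdist \tilde e^{(\sigma)}$.

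The main obstacle is therefore uniform integrability; I would aim at $\sup_m \E{U_m^{1+\delta}} < \infty$ for some $\delta > 0$. The limit one expects is $\E{\exp((1+\delta)\!\int_0^\sigma e^{(\sigma)})} = \phi(-(1+\delta)\sigma^{3/2})$, which is finite because the Airy Laplace transform $\phi$ is entire; this pins down the right target and makes the bound plausible. To establish it I would exploit the fact that $X^m$ is the depth-first walk of a Poisson$(1)$ Galton--Watson tree conditioned to have $m$ vertices, so $\E{U_m^{1+\delta}}$ is expressible, via Lagrange inversion, as a ratio of tree generating functions whose asymptotics can be compared directly with the value of $\phi$ above.
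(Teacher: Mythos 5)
Your architecture coincides with the paper's own proof of Theorem~\ref{thm:conv_marked_tildedX}: reduce to a single length by Brownian scaling, transfer the tilting from trees to depth-first walks, observe that $(m/\sigma)^{-3/2}a(T_m)=\int_0^\sigma Y^m(t)\,dt$ exactly (your $O((m/\sigma)^{1/2}\sup_t Y^m)$ error term is in fact zero, since $Y^m$ is the step function built from the walk), deduce the joint convergence of $(Y^m,U_m)$ to $(e^{(\sigma)},\Psi)$ from Theorem~\ref{mamothm} and continuity of $h\mapsto\int h$, and close the change-of-measure argument by uniform integrability of $\{U_m\}$. All of this matches the paper step for step.

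The one place you diverge --- and the only place where the theorem has genuine technical content --- is the uniform integrability of $U_m$, and there your proposal stops at a sketch. The paper proves $\sup_m\E{(1-p)^{-\xi a(T_m)}}<\infty$ (Lemma~\ref{lem:conv_moments_area}) by a direct tail estimate: first a Gaussian bound $\Prob{\|X^m\|\ge x\sqrt m}\le Ce^{-\alpha x^2}$ (Lemma~\ref{lem:heightbound}, via Borel's queue representation of the conditioned Poisson Galton--Watson tree, a cyclic-rotation argument and the ballot theorem), then the crude inequality $a(T_m)\le m\|X^m\|$ and a Gaussian integral. Your alternative --- show $\E{U_m^{1+\delta}}\to\phi(-(1+\delta)\sigma^{3/2})<\infty$ by analytic combinatorics and note that a convergent sequence of finite quantities is bounded --- is viable in principle: the entireness of the Airy Laplace transform correctly pins down the finite target, and asymptotics for the exponential moments of $m^{-3/2}a(T_m)$ are in the literature the paper itself cites. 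But as written the decisive estimate is asserted, not established; in particular, passing from convergence in distribution (or of polynomial moments) to convergence of the moment generating function at a fixed \emph{positive} argument itself requires uniform-in-$m$ moment or tail control of the area, which is of essentially the same difficulty as the bound the paper proves. So: right strategy, correct identification of where the difficulty sits, but the key uniform integrability bound remains a gap until that generating-function computation is actually carried out or replaced by a tail bound of the paper's type.
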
 

The proof consists in transferring known limits for uniform random trees over to tilted trees. 
We must first ensure that the change of measure defined by $\pc{\tilde T_m^p=T}\propto (1-p)^{-a(T)}$ is well-behaved when $p=O(m^{-3/2})$. 
To do so, we shall in fact first derive tail bounds on the maximum of the depth-first walk. (\citet{mar09} have proved similar bounds for the maxima of Dyck paths, which are essentially the contour processes of Catalan trees.) 
Let $T_m$ be a uniformly random tree on $[m]$, and let $X^m$ be the associated depth-first walk. 
\begin{lem}\label{lem:heightbound}
There exist constants $C\ge 0$ and $\alpha>0$ such that for all $m\in \Z^+$ and all $x \geq 0$, 
\[
\p{\|X^m\| \geq x \sqrt m} \leq C e^{-\alpha x^2}.
\]
\end{lem}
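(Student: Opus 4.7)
My approach would be to reduce the lemma to the classical sub-Gaussian tail bound for the height $\|H^m\|$ of a uniform random labeled tree. Since such a tree is distributed as a Poisson$(1)$ Galton--Watson tree conditioned on having $m$ vertices, one has
\[
\Prob{\|H^m\|\geq y\sqrt{m}} \leq C_1 e^{-\alpha_1 y^2}
\]
uniformly in $m\geq 1$ and $y\geq 0$, a classical estimate (see, e.g., Flajolet--Odlyzko or Addario-Berry--Devroye--Janson).

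To transfer this to a bound on $\|X^m\|$, I would combine it with Theorem~\ref{thm:MaMobound}: for any fixed $\nu\in (0,1/4)$, outside an event of probability at most $e^{-\gamma m^\nu}$, one has $\|X^m\|\leq \|H^m\|/2 + m^{1/4+\nu}$. Taking $x\geq 4m^{\nu-1/4}$ so that $m^{1/4+\nu}\leq x\sqrt m/4$, this gives
\[
\Prob{\|X^m\|\geq x\sqrt m}\leq \Prob{\|H^m\|\geq (3/2)x\sqrt m} + e^{-\gamma m^\nu}\leq C_1 e^{-cx^2} + e^{-\gamma m^\nu}
\]
for some $c>0$. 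For $x$ below a fixed threshold the lemma is trivial by taking $C$ large; finitely many small $m$ can be absorbed into $C$ likewise.

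The main obstacle is that the error $e^{-\gamma m^\nu}$ does not decay like $e^{-\alpha x^2}$ when $x$ is comparable to $\sqrt m$. To handle this extreme regime I would exploit the random-walk structure of $X^m$: a direct computation from the definition of ordered DFS gives $X^m(i+1)-X^m(i) = c(v_i)-1$, where $c(v_i)$ is the number of children of the $i$-th vertex explored. Under the uniform law, $X^m$ is thus a random walk with iid Poisson$(1)-1$ increments conditioned to first hit $-1$ at step $m$. By the cycle lemma / Kemperman's formula the conditioning inflates unconditioned probabilities by at most $O(\sqrt m)$, and standard maximal inequalities together with Cram\'er--Chernoff bounds for sums of iid Poisson$(1)-1$ variables then give $\Prob{\|X^m\|\geq x\sqrt m}\leq C'\sqrt m\, e^{-\alpha' x^2}$. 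This is of the required form whenever $x\geq c_0\sqrt{\log m}$ for any fixed $c_0$ with $\alpha' c_0^2 > 1/2$, so choosing $\nu$ small enough that this threshold is eventually below $\sqrt{\gamma/\alpha}\,m^{\nu/2}$ ensures that the two regimes overlap, and the lemma follows by combining them.
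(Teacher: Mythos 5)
Your proposal is correct, but it takes a genuinely different route from the paper. The paper proves the bound in a single, self-contained computation valid uniformly in $x$: it realizes the size-conditioned tree via Borel's queueing construction, uses a cyclic-rotation (Dwass/Andersen) argument to express $\|X^m\|$ as the maximum minus the minimum of a Poissonized bridge on $[0,m]$, and then combines the ballot theorem with a Chernoff bound to obtain a sub-Gaussian tail with \emph{no polynomial prefactor in $m$}, so no case analysis on the range of $x$ is needed. You instead stitch together two regimes: for $x\lesssim m^{\nu/2}$ you import classical sub-Gaussian height tails for uniform (equivalently, size-conditioned Poisson$(1)$ Galton--Watson) trees and transfer them to $X^m$ via Theorem~\ref{thm:MaMobound}, and for $x\gtrsim\sqrt{\log m}$ you use the {\L}ukasiewicz-path description together with Kemperman's formula and a Doob/Chernoff maximal inequality; the regimes overlap and the lemma follows. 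Both halves are sound, with two caveats worth recording: (i) conditioning directly on first passage to $-1$ at time $m$ inflates unconditioned probabilities by $1/\Prob{\zeta=m}\asymp m^{3/2}$, not $O(\sqrt m)$ --- you only get $O(\sqrt m)$ after first rotating to the bridge, which is exactly the step the paper performs --- but this merely changes your threshold to $\alpha'c_0^2>3/2$ and does not affect the conclusion; (ii) the identification of the ordered-DFS walk of a uniform labeled tree with the {\L}ukasiewicz path of the conditioned Galton--Watson tree requires a word on exchangeability of the children orderings, which the paper supplies via the queue. In effect, the paper's argument is a sharpened version of your extreme-regime argument in which the rotation and ballot theorem eliminate the $\mathrm{poly}(m)$ factor, rendering your first regime (and its reliance on the external height-tail and Marckert--Mokkadem estimates) unnecessary; what your version buys in exchange is that it leans on standard cited results rather than redoing the ballot-theorem computation from scratch.
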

\begin{proof} To prove the lemma we use a connection with a queueing process which is essentially due to Borel \cite{borel1942etb}. 
Consider a queue with Poisson rate 1 arrivals and constant service time, started at time zero with a single customer in the queue. We may form a rooted tree (rooted at 
the first customer) associated with the queue process, run until the first time there are no customers in the queue, in the following manner: if a new customer joins the queue at time 
$t$, he is joined to the customer being served at 
time $t$. We denote the resulting rooted tree by $\mathscr{T}$. Then $\mathscr{T}$ is distributed as a Poisson(1) Galton--Watson tree and, hence, conditional on its size being $m$, as $T_m$ (viewed as an unlabeled tree) 
\cite{borel1942etb}. 
Viewing the arrivals as given by a Poisson process $\scr{Q}$ of intensity $1$ on $\mathbb{R}^+$, we may also associate an interpolated random walk to the process, by 
$S_t=|\scr{Q} \cap [0,t)|-t$, for $t \in \mathbb{R}^+$. Then $|\mathscr{T}|$ is precisely the first time $t$ that $S_t = -1$, i.e., that $|\scr{Q} \cap [0,t)|=t-1$. 
Furthermore, $\{S_t,t=1,2,\ldots,|\mathscr{T}|-1\}$
is distributed precisely as the depth-first 
walk of $\mathscr{T}$. (It is not {\em equal} to the depth-first walk, but to the breadth-first walk discussed in Section \ref{sec:random_graph_limit}.) 

Using the above facts, we may thus generate $\mathscr{T}$ conditional upon $|\mathscr{T}|=m$ as follows. First let $U_1,\ldots,U_{m-1}$ be independent and uniformly distributed on $[0,m]$, and let 
$\scr{U}=\{U_1,\ldots,U_{m-1}\}$, so that $\scr{U}$ is distributed as $|\scr{Q} \cap [0,m)|$ conditional on $|\scr{Q} \cap [0,m)|=m-1$. Next, let $\mu \in \{1,\ldots,m\}$ minimize $|\scr{U} \cap [0,\mu]|-\mu$, and 
apply a {\em cyclic rotation by $-\mu$} to all the points in $\scr{U}$ to obtain $\scr{U}'$. In other words, let 
\[
\scr{U}' = \{(U_1-\mu) \mod m,\ldots,(U_{m-1}-\mu) \mod m\}.
\]  
Then $m$ is precisely the first time $t$ that $|\scr{U}'\cap[0,t)| =t-1$, and $\scr{U'}$ is distributed precisely as $|\scr{Q} \cap [0,m)|$ conditional on $|\mathscr{T}|=m$ (see \cite{dwass69progeny}; this type of 
``rotation argument'' was introduced in \cite{andersen53fluctuations}). 

Now write $X^m$ for the depth-first walk of $T_m$, and let $X_i = |\scr{U'}\cap[0,i)|-i$ for $i=1,\ldots,m$. By the above, $\{X^m_1,\ldots,X^m_{m-1},-1\}$ and $\{X_1,\ldots,X_m\}$ are identically distributed.
In particular, 
\begin{align*} 
\|X^m\| 	& \eqdist \max_{0 \leq i \leq m} X_i \\
		& = \max_{0 \leq i \leq m} (|\scr{U} \cap[0,i)|-i) - \min_{0 \leq i \leq m} (|\scr{U} \cap[0,i)|-i) - 1 \\ 
		& = \max_{0 \leq i \leq m} (|\scr{U} \cap[0,i)|-i) + \max_{0 \leq i \leq m} (|\scr{U} \cap[i,m)|-(m-i)) \\
		& \leq \sup_{0 \leq t \leq m} (|\scr{U} \cap[0,t)|-t) + \sup_{0 \leq t \leq m} (|\scr{U} \cap[t,m)|-(m-t)).
\end{align*} 
The two suprema in the latter equation are identically distributed, and so 
\begin{equation}\label{identdist}
\p{\|X^m\| \geq 2x} 	\leq 2\p{\sup_{0 \leq t \leq m} (|\scr{U} \cap[0,t)|-t) \geq x}.
\end{equation}
For any fixed $t \in [0,m]$, let $P_t$ be the event that there is a point of $\scr{U}$ at $t$. 
For fixed $x \geq 0$ and $t \in [0,m]$, 
$E_{t,x}$ be the event that $|\scr{U}\cap[0,t]| = \lceil t+x\rceil$ but that $|\scr{U}\cap[0,s)| < s+x$ for all 
$0 \leq s \leq t$ (so in particular, there is a point at $t$). 
We then have 
\begin{align*}
\probC{E_{t,x}}{|\scr{U}\cap[0,t]|=\lceil t+x\rceil,P_t} ~=~ \probC{|\scr{U} \cap [0,s)| < s+x~\forall\ 0 \leq s < t}{|\scr{U}\cap [0,t)|=\lceil t+x\rceil-1}.
\end{align*}
Applying the ballot theorem for stochastic processes to the latter probability (see, e.g., \cite{takacs65comb} or p.~218 of \cite{kallenberg03foundations}), we obtain the bound 
\begin{equation}\label{ballot}
\probC{E_{t,x}}{|\scr{U}\cap[0,t]| = \lceil t+x\rceil,P_t} \leq 1- \frac{\lceil t+x\rceil-1}{t+x} < \frac{1}{t}.
\end{equation}
Furthermore, in order for $\{\sup_{0 \leq t \leq m} (|\scr{U} \cap[0,t)|-t) \geq x\}$ to occur, 
$E_{t,x}$ must occur for some $0 \leq t \leq m$. 
Since an infinitesimal interval $[t,t+dt]$ contains a point of $\scr{U}$ with probability $dt(m-1)/m$, and 
for each $t$, $|\scr{U} \cap[0,t)|$ is distributed as Bin$(m-1,t/m)$, it follows from (\ref{identdist}) and (\ref{ballot}) that
\begin{align*} 
\p{\|X^m\| \geq 2x} 	&\leq 2\int_0^m \p{E_{t,x}} dt \\
				&< 2\int_{0}^m \frac{1}{t} \p{|\scr{U} \cap[0,t)|=\lceil t+x\rceil} \frac{(m-1)}{m} dt\\ 
				&\leq 2 \int_{0}^m \frac{1}{t} e^{-x^2/(2(t+x/3))}dt,
\end{align*} 
where the last inequality follows from Chernoff's bound for Binomial random variables (see, for example, \cite{janson00random}). 
The conclusion follows easily for $x \leq m/2$, and thus for all $x$ (since we always have $\|X^m\|< m$). 
\end{proof} 

\begin{lem}\label{lem:conv_moments_area}
There exist universal constants $K, \kappa > 0$ such that the following holds. Fix $c > 0$ and $\xi > 0$, and suppose that $p \in (0,1)$ and $m \in \Z^+$ are such that $p \leq cm^{-3/2}$. Let 
$T_m$ be a uniform random tree on $[m]$.  Then
\[
\Ec{(1-p)^{-\xi a(T_m)}}< K e^{\kappa c^2\xi^2}.
\]
\end{lem}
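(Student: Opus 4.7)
The plan is to reduce the moment $\E{(1-p)^{-\xi a(T_m)}}$ to a tail bound on the depth-first walk $X^m$, which is exactly what Lemma \ref{lem:heightbound} supplies. The target shape $Ke^{\kappa c^2\xi^2}$ is the MGF of a sub-Gaussian random variable, so the overall strategy is to show that $a(T_m)$ is sub-Gaussian on the natural scale $m^{3/2}$.

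First I would invoke $-\log(1-p)\le 2p$ (valid as soon as $p\le 1/2$, which follows from $p\le cm^{-3/2}$ once $m\ge (2c)^{2/3}$) to replace $(1-p)^{-\xi a(T_m)}$ by $\exp(2\xi p\,a(T_m))$; the finitely many small values of $m$ on which this step fails contribute only a bounded quantity that can be absorbed into $K$ and $\kappa$. Then the tail-integral identity
\[
\E{e^{2\xi p\,a(T_m)}}=1+2\xi p\int_0^\infty e^{2\xi p u}\,\Prob{a(T_m)>u}\,du
\]
reduces the problem to bounding the tail of $a(T_m)$. For that I would use the crude but effective deterministic inequality $a(T_m)\le (m-1)\|X^m\|$, which together with Lemma \ref{lem:heightbound} gives
\[
\Prob{a(T_m)>u}\le \Prob{\|X^m\|>u/(m-1)}\le C\exp\bigl(-\alpha u^2/m^3\bigr).
\]

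After the substitution $v=u/m^{3/2}$, the resulting integral becomes a one-dimensional Gaussian integral of the form $m^{3/2}\int_0^\infty \exp(2\xi p m^{3/2}v-\alpha v^2)\,dv$. Completing the square and using $pm^{3/2}\le c$ bounds it by $m^{3/2}\sqrt{\pi/\alpha}\,e^{c^2\xi^2/\alpha}$. Feeding this back, the external prefactor $2\xi p$ multiplies the extracted $m^{3/2}$ to give $2c\xi$, and finally the elementary inequality $x\le e^{x^2}$ (applied with $x=c\xi$) absorbs this linear factor into the exponential, yielding a bound of the form $Ke^{\kappa c^2\xi^2}$ with $\kappa=1+1/\alpha$.

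The step I expect to be the main obstacle is the passage from the pointwise sub-Gaussian control on $\|X^m\|$ to a sharp enough tail bound on the area $a(T_m)$: the inequality $a(T_m)\le (m-1)\|X^m\|$ is manifestly wasteful, since a typical excursion spends very little time near its maximum. What saves the day is that the tail rate in Lemma \ref{lem:heightbound} is genuinely quadratic and that $p$ sits at exactly the $m^{-3/2}$ scale, so every polynomial-in-$m$ loss is paid for by the Gaussian decay and fits neatly into the final exponential.
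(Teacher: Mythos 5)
Your proposal is correct and follows essentially the same route as the paper's proof: the reduction $(1-p)^{-\xi a(T_m)}\le e^{2\xi p\,a(T_m)}$ for $p\le 1/2$, the crude bound $a(T_m)\le m\|X^m\|$ combined with Lemma \ref{lem:heightbound}, and a completed-square Gaussian integral. Your version is if anything slightly more careful in writing the tail-integral identity for the exponential moment, but the substance is identical.
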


\begin{proof}Fix $c$ and $\xi$ as above, and let $\lambda=2c\xi$. We may clearly restrict our attention to $m$ sufficiently large that $p\le c m^{-3/2} \leq 1/2$. 
For all such $m$ we have  
$$
(1-p)^{-\xi a(T_m)}\le e^{2\xi p a(T_m)} \le e^{\lambda m^{-3/2} a(T_m)}, 
$$ 
so it suffices to prove that $\sup_{m\ge 1}\Ec{e^{\lambda m^{-3/2} a(T_m)}} < K e^{\kappa \lambda^2/4}$ for some universal constants $K$ and $\kappa$. 
But $a(T_m)\le m \|X^m\|$, and so 
$$
\pc{m^{-3/2} a(T_m) \ge x} \le \pc{m^{-1/2}\|X^m\|\ge x}. 
$$ 
Since $\|X^m\|\le m$, it follows by Lemma~\ref{lem:heightbound} that
\begin{equation}\label{eq:tail_bound_area} 
\E{e^{\lambda m^{-3/2}a(T_m)}} \le \int_0^{m^{1/2}}e^{\lambda x}\pc{m^{-1/2}\|X^m\|\ge x} dx \le \int_0^{m^{1/2}}e^{\lambda x}\cdot Ce^{-\alpha x^2} dx.
\end{equation}
Completing the square in the last integrand so as to express the right-hand side as a Gaussian integral yields the claim with $\kappa=\alpha^{-2}$ and $K=C\sqrt{\pi/\alpha}$.
\end{proof}

\begin{proof}[Proof of Theorem~\ref{thm:conv_marked_tildedX}]
We assume $\sigma=1$ for notational simplicity; the general result follows by Brownian scaling. 
Again, let $(X^m(i), 0 \leq i \leq m)$ be the depth-first walk associated with a uniformly-chosen labeled tree. Its area is $a(T_m)=\sum_{i=0}^{m-1} X^m(i)$.  We know from Theorem \ref{mamothm} that 
\begin{equation} \label{eqn:vanilladfwconv}
(m^{-1/2} X^m(\fl{mt}), 0 \leq t \leq 1)  \convdist (e(t), 0 \leq t \leq 1).
\end{equation} 
We will henceforth want to think of $X^m$ as a function in $\mathbb{D}([0,1], \R^+)$ and will write $\bar{X}^m(s)$ instead of $m^{-1/2} X^m(\fl{m s})$.  For $h \in \mathbb{D}([0,1],\R^+)$, let $I(h) = \int_0^1 h(t) dt$; $I$ is a continuous functional of the path $h$.  It then follows from (\ref{eqn:vanilladfwconv}) that 
\[
m^{-3/2} a(T_m)=\frac{1}{m^{3/2}} \sum_{i=0}^{m-1} X^m(i) = \int_0^1 \bar{X}^m(t) dt \convdist \int_0^1 e(t) dt,
\]
jointly with the convergence in distribution of the depth-first walk.  

Now  suppose that $f: \mathbb{D}([0,1], \R^+) \to \R^+$ is any bounded continuous function.  Then 
\[
\E{f\left(m^{-1/2} \tilde{X}^m(\fl{m \, \cdot\,})\right)} 
= \frac{ \E{ f(\bar{X}^m) (1 - p)^{-m^{3/2} \int_0^1 \bar{X}^m(t) dt} } }
           { \E{ (1 - p)^{-m^{3/2} \int_0^1 \bar{X}^m(t) dt} } }
\]
and
\[
\E{f(\tilde{e})} = \frac{ \E{f(e) \exp \left( \int_0^1 e(x) dx \right) }}{\E{\exp \left( \int_0^1 e(x) dx \right)}}.
\]
Since $\int_0^1 \bar{X}^m(t)dt \convdist \int_0^1 e(t) dt$ and $p\sim m^{-3/2}$, we also have 
\[
(1 - p)^{-m^{3/2} \int_0^1 \bar{X}^m(t) dt} \convdist \exp \left(\int_0^1 e(t) dt \right).
\]
By Lemma~\ref{lem:conv_moments_area} the above sequence is uniformly integrable, and so  
we can deduce that $\Ec{f(m^{-1/2} \tilde{X}^m(\fl{m \,\cdot \,})}\to \Ec{f(\tilde e)}$, which implies that
$(m^{-1/2} \tilde{X}^m(\fl{m t}), 0 \leq t < 1) \convdist \tilde{e}$.
\end{proof}

Unsurprisingly, as in the case of uniform trees, we also have convergence of the height process of tilted trees towards tilted excursions.
 \begin{thm}\label{thm:conv_marked_tiltedh}Suppose that $p=p(m)$ is such that $mp^{2/3}\to \sigma$ as $m\to\infty$. Then, as $m\to\infty$, 
 $$
 ((m/\sigma)^{-1/2}\tilde H^m(\fl{(m/\sigma)s}), 0\le s\le \sigma) \convdist (2\tilde e^{(\sigma)}(s), 0\le s\le \sigma)
 $$ 
 in $\mathbb D([0,\sigma], \R^+)$.
 \end{thm}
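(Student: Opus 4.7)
The plan is to deduce Theorem~\ref{thm:conv_marked_tiltedh} from the already-established convergence of the depth-first walk (Theorem~\ref{thm:conv_marked_tildedX}) by transferring the Marckert--Mokkadem estimate (Theorem~\ref{thm:MaMobound}) from the uniform measure on trees to the tilted measure $\Prob{\tilde T_m^p = T} \propto (1-p)^{-a(T)}$. By Brownian scaling it suffices to treat $\sigma = 1$. Since the limit $\tilde e$ is continuous, Skorohod convergence of $m^{-1/2}\tilde H^m(\fl{m \cdot})/2$ to $\tilde e$ follows from Theorem~\ref{thm:conv_marked_tildedX} together with the uniform negligibility (under the tilted law) of $m^{-1/2}\sup_{0 \le i < m} |\tilde X^m(i) - \tilde H^m(i)/2|$.

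To establish this uniform negligibility, fix a small $\nu \in (0, 1/4)$ and let $E_m = \{\sup_{0 \le i < m} |X^m(i) - H^m(i)/2| \ge m^{1/4+\nu}\}$, where $X^m, H^m$ are the depth-first walk and height process of the uniform tree $T_m$. By the definition of the tilted law and the Cauchy--Schwarz inequality,
\[
\Prob{\tilde T_m^p \in E_m} = \frac{\E{\I{T_m \in E_m} (1-p)^{-a(T_m)}}}{\E{(1-p)^{-a(T_m)}}} \le \frac{\sqrt{\Prob{T_m \in E_m}} \cdot \sqrt{\E{(1-p)^{-2 a(T_m)}}}}{\E{(1-p)^{-a(T_m)}}}.
\]
By Lemma~\ref{lem:conv_moments_area} applied with $\xi = 2$, the factor $\E{(1-p)^{-2 a(T_m)}}$ is bounded uniformly in $m$. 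The denominator converges to $\phi(-1) = \E{\exp(\int_0^1 e(t) dt)} > 0$: this follows from the convergence in distribution $m^{-3/2} a(T_m) \convdist \int_0^1 e(t) dt$ (a consequence of Theorem~\ref{mamothm} and the continuity of $h \mapsto \int_0^1 h(t) dt$), combined with the uniform integrability of $(1-p)^{-a(T_m)}$ supplied by Lemma~\ref{lem:conv_moments_area}. Finally, Theorem~\ref{thm:MaMobound} gives $\Prob{T_m \in E_m} \le e^{-\gamma m^\nu}$ for $m$ large, so the whole ratio tends to $0$.

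Consequently, under the tilted law, $m^{-1/2} \sup_{0 \le i < m} |\tilde X^m(i) - \tilde H^m(i)/2| \to 0$ in probability. Combined with Theorem~\ref{thm:conv_marked_tildedX} this yields $m^{-1/2} \tilde H^m(\fl{m \cdot})/2 \convdist \tilde e$ in $\mathbb{D}([0,1], \R^+)$, and multiplying by $2$ gives the theorem for $\sigma = 1$; the general case follows by Brownian scaling.

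The main obstacle, such as it is, lies in verifying that the denominator $\E{(1-p)^{-a(T_m)}}$ stays bounded away from $0$ in the limit: one needs both the in-distribution convergence of the area functional and, separately, the $L^q$ control from Lemma~\ref{lem:conv_moments_area} (for some $q > 1$) to promote it to convergence of expectations. Once this is in place, Cauchy--Schwarz with the $\xi = 2$ bound crushes the numerator against the exponentially small uniform-tree probability $\Prob{T_m \in E_m}$, and the rest of the argument is routine.
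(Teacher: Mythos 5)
Your proof is correct, and it follows the same overall strategy as the paper: reduce to $\sigma=1$, show that $m^{-1/2}\sup_i|\tilde X^m(i)-\tilde H^m(i)/2|\to 0$ in probability under the tilted law by transferring the Marckert--Mokkadem estimate (Theorem~\ref{thm:MaMobound}), and then conclude from Theorem~\ref{thm:conv_marked_tildedX}. The one genuine difference is in how the transfer is carried out. The paper's Lemma~\ref{lem:dist_tilted_Xh} splits according to whether $a(T_m)\ge m^{25/16}$: on the large-area event it applies Markov's inequality to $\E{m^{-3/2}a(\tilde T_m)}$, and on the complementary event it bounds the Radon--Nikodym density crudely by $(1-p)^{-m^{25/16}}\approx e^{m^{1/16}}$, which is beaten by the $e^{-\gamma m^{1/8}}$ from Theorem~\ref{thm:MaMobound}. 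You instead apply Cauchy--Schwarz and control $\E{(1-p)^{-2a(T_m)}}$ via Lemma~\ref{lem:conv_moments_area} with $\xi=2$. Your route is cleaner and yields a stretched-exponential bound $O(e^{-\gamma m^{\nu}/2})$ on the bad event, whereas the paper's truncation argument only gives the polynomial rate $Km^{-1/16}$ (a rate the paper does quote explicitly later, in the proof of Theorem~\ref{thm:gh_connectedx}, though either rate suffices there). One small remark: your treatment of the denominator, while correct (convergence in distribution of $m^{-3/2}a(T_m)$ plus the uniform integrability from Lemma~\ref{lem:conv_moments_area} gives convergence to $\phi(-1)>0$), is more than is needed, since $(1-p)^{-a(T_m)}\ge 1$ pointwise and hence $\E{(1-p)^{-a(T_m)}}\ge 1$ trivially.
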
 
 The theorem follows straightforwardly from the following lemma and Lemma \ref{lem:conv_moments_area}, much as Theorem \ref{thm:conv_marked_tildedX} folllowed from Lemma \ref{lem:conv_moments_area}; its proof is omitted.

\begin{lem}\label{lem:dist_tilted_Xh}
Suppose that $p=p(m)$ is such that $m p^{2/3} \to \sigma$ as $m\to\infty$. 
For $m\ge 1$ let $\tilde T_m^p$ be a tree on $[m]$ sampled according to the distribution $\pc{\tilde T_m^p=T}\propto (1-p)^{-a(T)}$. Let $\tilde X^m$ and $\tilde H^m$ be the associated depth-first walk and height process. Then, there are constants $K$ and $m_0\ge 0$, such that for all $m\ge m_0$. 
$$
\pc{\|\tilde X^m-\tilde H^m/2\|\ge m^{3/8}} \le K m^{-1/16}.
$$
\end{lem}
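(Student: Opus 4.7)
The plan is to transfer the known discrepancy bound between the depth-first walk and the height process for a \emph{uniform} tree (Theorem~\ref{thm:MaMobound}) over to the tilted tree by using the change-of-measure definition of $\tilde T_m^p$, together with the moment bound in Lemma~\ref{lem:conv_moments_area}. The target event is $E_m = \{\|X^m - H^m/2\| \ge m^{3/8}\}$, which with the choice $\nu = 1/8$ is controlled on the uniform side by $\p{E_m} \le e^{-\gamma m^{1/8}}$ for $m$ large enough.

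Concretely, first I would write, using the definition $\p{\tilde T_m^p = T} \propto (1-p)^{-a(T)}$ and the fact that $\tilde X^m, \tilde H^m$ are deterministic functions of the tree,
\[
\p{\|\tilde X^m - \tilde H^m/2\| \ge m^{3/8}}
= \frac{\E{\I{E_m}\,(1-p)^{-a(T_m)}}}{\E{(1-p)^{-a(T_m)}}},
\]
where $T_m$ is a uniform random tree on $[m]$ and $X^m, H^m$ are its depth-first walk and height process. The denominator satisfies the trivial lower bound $\E{(1-p)^{-a(T_m)}} \ge 1$ because $a(T_m) \ge 0$ and $p \in (0,1)$.

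Next I would bound the numerator by Cauchy--Schwarz:
\[
\E{\I{E_m}\,(1-p)^{-a(T_m)}}
\le \p{E_m}^{1/2}\,\E{(1-p)^{-2a(T_m)}}^{1/2}.
\]
Since $mp^{2/3}\to\sigma$ we have $p\le cm^{-3/2}$ for some $c>0$ and all $m$ large, so Lemma~\ref{lem:conv_moments_area} applied with $\xi=2$ gives $\E{(1-p)^{-2a(T_m)}} \le K e^{4\kappa c^2}$, a bound uniform in $m$. Combining this with Theorem~\ref{thm:MaMobound} (taking $\nu=1/8$, so that $n^{1/4+\nu}=n^{3/8}$) yields, for all sufficiently large $m$,
\[
\p{\|\tilde X^m - \tilde H^m/2\| \ge m^{3/8}}
\le K'\,e^{-\gamma m^{1/8}/2},
\]
which is far smaller than $m^{-1/16}$ once $m \ge m_0$ is large enough, so the stated lemma follows a fortiori.

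I do not expect any serious obstacle: the only thing to be a little careful about is making sure the exponent in the moment bound of Lemma~\ref{lem:conv_moments_area} is large enough to absorb the Cauchy--Schwarz factor $1/2$ applied to the tail probability, and that the lower bound on the denominator is harmless (which it is, since $(1-p)^{-a(T_m)}\ge 1$). The looseness of the stated rate $m^{-1/16}$, compared to the actually available stretched-exponential bound, suggests that the authors only need a polynomial rate for subsequent arguments, but the proof will in fact give much more.
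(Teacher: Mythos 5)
Your proof is correct, and it rests on the same two ingredients as the paper's (the change of measure to a uniform tree, Theorem~\ref{thm:MaMobound} with $\nu=1/8$, and the exponential moment bound of Lemma~\ref{lem:conv_moments_area}), but you combine them differently. The paper splits the numerator according to whether $a(T_m)\ge m^{25/16}$ or not: on the small-area event it bounds the tilting factor deterministically by $(1-p)^{-m^{25/16}}=e^{O(m^{1/16})}$, which is absorbed by $e^{-\gamma m^{1/8}}$, while the large-area event is controlled by Markov's inequality applied to $m^{-3/2}a(\tilde T_m)$ — and it is precisely this Markov step that produces the stated rate $m^{-1/16}$. You instead apply Cauchy--Schwarz to the numerator and invoke Lemma~\ref{lem:conv_moments_area} with $\xi=2$ to bound $\E{(1-p)^{-2a(T_m)}}$ uniformly, which avoids the truncation entirely and yields the stronger stretched-exponential bound $K'e^{-\gamma m^{1/8}/2}$; your observation that the denominator is trivially at least $1$ is also cleaner than the paper's appeal to the convergence $\E{(1-p)^{-a(T_m)}}\to\E{\exp(\int_0^{\sigma}e^{(\sigma)}(s)\,ds)}>0$ (which leans on Theorem~\ref{thm:conv_marked_tildedX} and Lemma~\ref{lem:conv_moments_area}). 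The only point worth being explicit about is that the hypothesis $mp^{2/3}\to\sigma$ gives $p\le cm^{-3/2}$ only for $m$ sufficiently large, which you note and which is harmless since the conclusion is only claimed for $m\ge m_0$.
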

\begin{proof}Let $T_m$ be a tree on $[m]$ chosen uniformly at random, and write $X^m$ and $H^m$ for its depth-first walk and height process. Then, by definition,
$$\pc{\|\tilde X^m - \tilde H^m/2\|\ge m^{3/8}} = \frac{\Ec{\I{\|X^m-H^m/2\|\ge m^{3/8}} (1-p)^{-a(T_m)}}}{\Ec{(1-p)^{-a(T_m)}}}.$$
Distinguishing between the trees with a ``large'' area, $a(T_m)\ge m^{25/16}$, and the others, we obtain
\begin{align*}
\pc{\|\tilde X^m-\tilde H^m/2\|\ge m^{3/8}} 
&\le \frac{\Ec{\I{a(T_m)\ge m^{25/16}}(1-p)^{-a(T_m)}}}{\Ec{(1-p)^{-a(T_m)}}} + \frac{\pc{\|X^m-H^m/2\|\ge m^{3/8}}(1-p)^{-m^{25/16}}}{\Ec{(1-p)^{-a(T_m)}}}\\
&\le \pc{a(\tilde T_m) \ge m^{25/16}} + \frac{e^{-\gamma m^{1/8}} e^{m^{1/16}}}{\Ec{(1-p)^{-a(T_m)}}},
\end{align*}
where the second inequality follows from the bound $\pc{\|X^m-H^m/2\|\ge m^{3/8}}\le e^{-\gamma m^{1/8}}$, for some $\gamma>0$ and all $m$ large enough, which is obtained from Theorem \ref{thm:MaMobound}. By Markov's inequality,
$$\pc{\|\tilde X^m-\tilde H^m/2\|\ge m^{3/8}} \le \Ec{m^{-3/2} a(\tilde T_m)}\cdot m^{-1/16} + \frac{e^{-\gamma (1+o(1))m^{1/8}}}{\Ec{(1-p)^{-a(T_m)}}}.$$
Finally, by Theorem~\ref{thm:conv_marked_tildedX} and Theorem~\ref{mamothm} together with Lemma~\ref{lem:conv_moments_area} we have
$$
\E{m^{-3/2}a(\tilde T_m)} \to \E{\int_0^{\sigma} \tilde e^{(\sigma)}(s) ds}<\infty \qquad \mbox{and}\qquad
\E{(1-p)^{-a(T_m)}}\to \E{\exp\pran{\int_0^{\sigma} e^{(\sigma)}(s) ds}}>0,
$$ 
as $m\to\infty$. 
\end{proof}



\section{The limit of connected components}
\label{sec:connected_components}

\subsection{Generating connected components of the random graph}

In this section, we discuss two different ways of constructing a connected graph from a tree.

\medskip
\noindent\textbf{Bijective encoding of connected graphs.} Consider a connected labeled graph $G$ on $m$ vertices, and recall that running the $\oDFS$ process on $G$ produces the depth-first tree $T(G)$. Recall that the edges permitted by $\oDFS$ are those between vertices which both lie in the stack $\mathcal O_i$ for some $i$.  By Lemma~\ref{permit}, $G$ can be recovered from $T=T(G)$ by adding some specific subset of the permitted edges.  

By Lemma~\ref{area}, we may consider permitted edges to be in bijective correspondence with the integral points $(i,j)$ lying above the $x$-axis and under the depth-first walk $X=(X(i), 1\le i< m)$ encoding $T$.  So suppose that we take the depth-first walk $X$ and place a mark at the point $(i,j)$ if there is an edge in $G$ between $v_i$, the $i$-th vertex explored in $\oDFS$ order (that is, the first vertex of $\mathcal O_i$), and the vertex lying in position $|\mathcal O_i|-j + 1$ in $\mathcal O_i$ (equivalently, the $j$-th vertex of $\mathcal O_i$ when counting from the end (or bottom) of the stack).
Call the resulting object a \emph{marked depth-first walk}.   Then clearly this gives a bijection between marked depth-first walks and connected graphs $G$.  

For a tree $T$ with depth-first walk $X$ and a pointset $\mathcal Q \subset \Z^+\times \Z^+$, let $G^X = G^X(T,\mathcal Q)$ be the graph obtained by adding to $T$ the edges corresponding to the points in $\mathcal Q \cap X$ where, for convenience, we define 
\begin{equation}\label{eq:def_intersection}
\mathcal S\cap f:= \{(x,y)\in \mathcal S: 0 < y \le f(x)\}\qquad \mbox{for all}~\mathcal S \subset \R^+\times \R^+~\mbox{and}~f:\R^+\to \R^+.
\end{equation}
A \emph{Binomial pointset of intensity $p$} is random subset of $\Z^+\times \Z^+$ in which each point is present independently with probability $p$.  The following lemma follows straightforwardly from Proposition~\ref{prop:dist_conn_comp}.

\begin{lem}\label{lem:unif_comp}Let $p=p\in (0,1)$. Let $\tilde T_m^p$ be a tree on $[m]$ sampled in such a way that $\Probc{\tilde T_m^p=T}\propto (1-p)^{-a(T)}$. Let $(\tilde X(i), 0\le i<m)$ be the associated depth-first walk. Let $\mathcal Q^p\subset \Z^+\times \Z^+$ be a Binomial pointset with intensity $p$, independent of $\tilde T_m^p$. Then, the graph $G^X(\tilde T_m^p, \mathcal Q^p)$ has the same distribution as $G_m^p$, a connected component of $G(n,p)$, $n\ge m$, conditioned to have $m$ vertices.
\end{lem}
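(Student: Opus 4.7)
The plan is to reduce this lemma to a direct application of Proposition~\ref{prop:dist_conn_comp} via the bijective encoding of connected graphs introduced at the start of the section. I would begin by making explicit the identification between the edges permitted by $\oDFS(\tilde T_m^p)$ and the integer points $(i,j)$ with $0 < j \le \tilde X(i)$; by Lemma~\ref{area} the number of such points equals $a(\tilde T_m^p)$, which matches the number of permitted edges, and the correspondence sending $(i,j)$ to the edge between $v_i$ and the $j$-th vertex from the bottom of $\mathcal O_i$ is one-to-one. This identification is exactly what makes $G^X(T,\mathcal Q)$ well-defined as in~\eqref{eq:def_intersection}.

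Next, I would unpack what the construction $G^X(\tilde T_m^p, \mathcal Q^p)$ does through this bijection. Since $\mathcal Q^p$ is a Binomial pointset of intensity $p$ independent of $\tilde T_m^p$, conditional on $\tilde T_m^p$ the events $\{(i,j) \in \mathcal Q^p\}$ indexed by integer points lying in the region $\{0 < j \le \tilde X(i)\}$ beneath the depth-first walk are independent Bernoulli$(p)$ variables. Under the bijection from the previous step, this is precisely the same as adding each of the $a(\tilde T_m^p)$ edges permitted by $\oDFS(\tilde T_m^p)$ independently with probability $p$, conditional on $\tilde T_m^p$. But this is exactly the recipe for $\tilde G_m^p$ described immediately before Proposition~\ref{prop:dist_conn_comp}, so that $G^X(\tilde T_m^p, \mathcal Q^p) \eqdist \tilde G_m^p$.

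Finally, Proposition~\ref{prop:dist_conn_comp} tells us that $\tilde G_m^p \eqdist G_m^p$, and chaining the two distributional equalities yields the conclusion. There is no real obstacle here: the lemma is essentially a translation of Proposition~\ref{prop:dist_conn_comp} into the language of marked depth-first walks, and the only real work lies in carefully verifying the bijection between permitted edges and integer points beneath $\tilde X$, which was already set up in the preceding discussion of the section.
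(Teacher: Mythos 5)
Your proposal is correct and is exactly the argument the paper intends: the paper itself only remarks that the lemma ``follows straightforwardly from Proposition~\ref{prop:dist_conn_comp}'', and your write-up supplies precisely the missing details, namely the bijection between permitted edges and lattice points under $\tilde X$ and the observation that restricting the Binomial pointset to that region reproduces the construction of $\tilde G_m^p$. Nothing to add.
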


It is convenient to devise a similar construction which uses the height process in place of the depth-first walk.  Although this second construction does not provide a bijection between graphs and ``marked height processes", it turns out to be closely related to the first construction, and useful to us in the asymptotics which follow.

\medskip
\noindent\textbf{Generating connected graphs from height processes.} Consider a tree $T$ on $[m]$ whose height process (here and below, by ``height process'' we always mean ``height process in $\oDFS$ order'') is given by $(H(i), 0\le i<m)$. Given a discrete pointset $\mathcal Q \subset \Z^+\times \Z^+$, we define a graph $G^H=G^H(T,\mathcal Q)$ on $[m]$ using the height process as follows. Take the tree $T$ and for $(i,j)\in \mathcal Q$ with $0 < 2j \le H(i)$, add an edge between the vertex $v_i$ and the vertex at distance $(2j-1)$ from the root $v_0$ on the unique path to $v_i$.  (As will be clarified below, the factor of two here is essentially the factor of two difference between the limits of the depth-first walk and the height process in the case of random trees; c.f.\ Theorem~\ref{mamothm}~and Lemma~\ref{lem:dist_tilted_Xh}.)  Note that upon adding these edges, we may produce a graph $G^H$ such that $T(G^H)\ne T$.  Moreover, the procedure $G^H(T,\,\cdot\,)$ does not provide a bijection, since it is not onto (one of the endpoints of every edge added is at odd distance from the root).

See Figure~\ref{fig:cycles} for an example of the two constructions.

\begin{figure}
	\centering
\begin{picture}(420,150)
\put(0,7){\includegraphics[scale=.5]{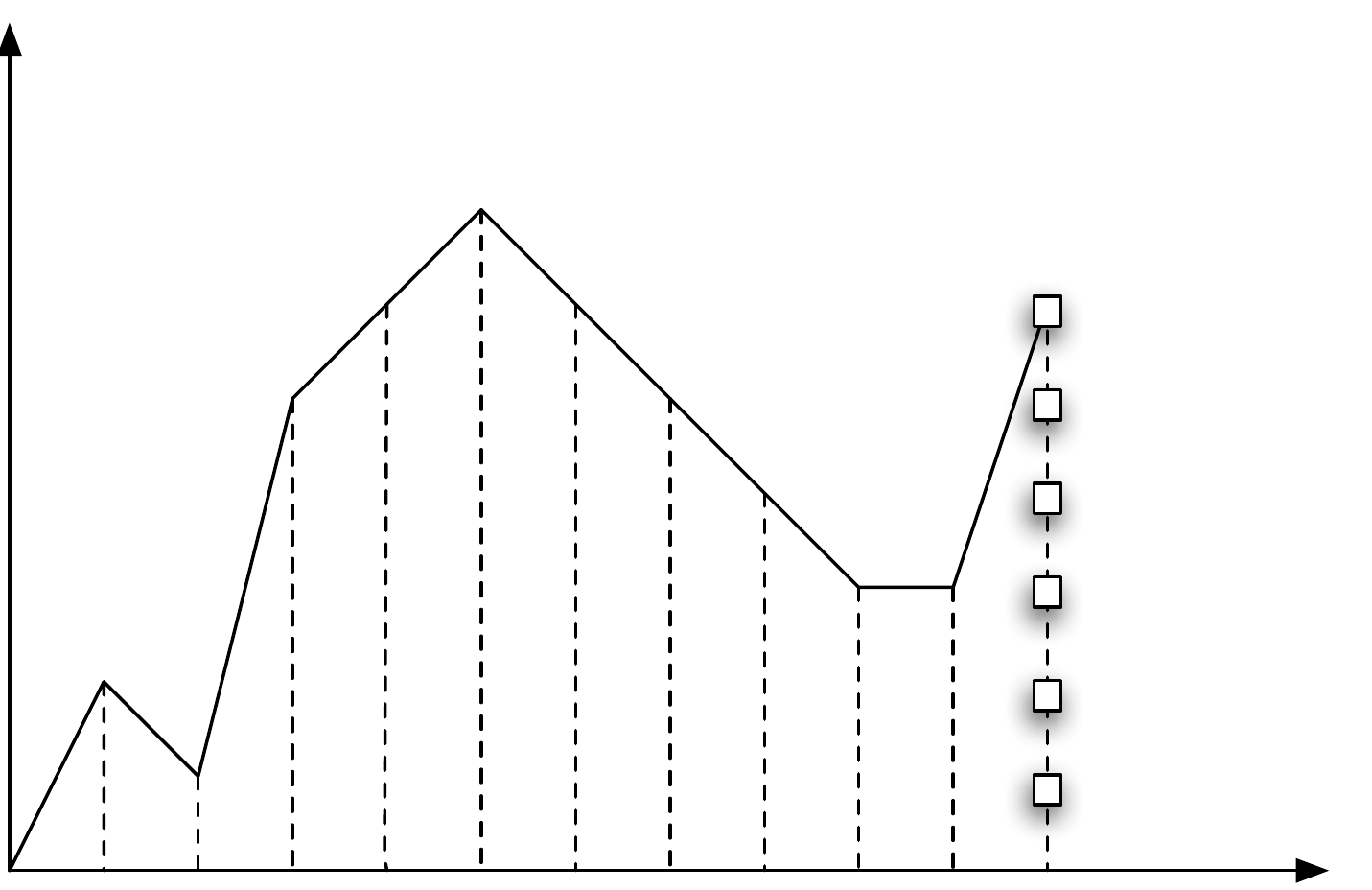}}
\put(155,-3){$i$}
\put(-5,140){$X(t)$}
\put(200,0){$t$}
\put(220,0){\includegraphics[scale=.5]{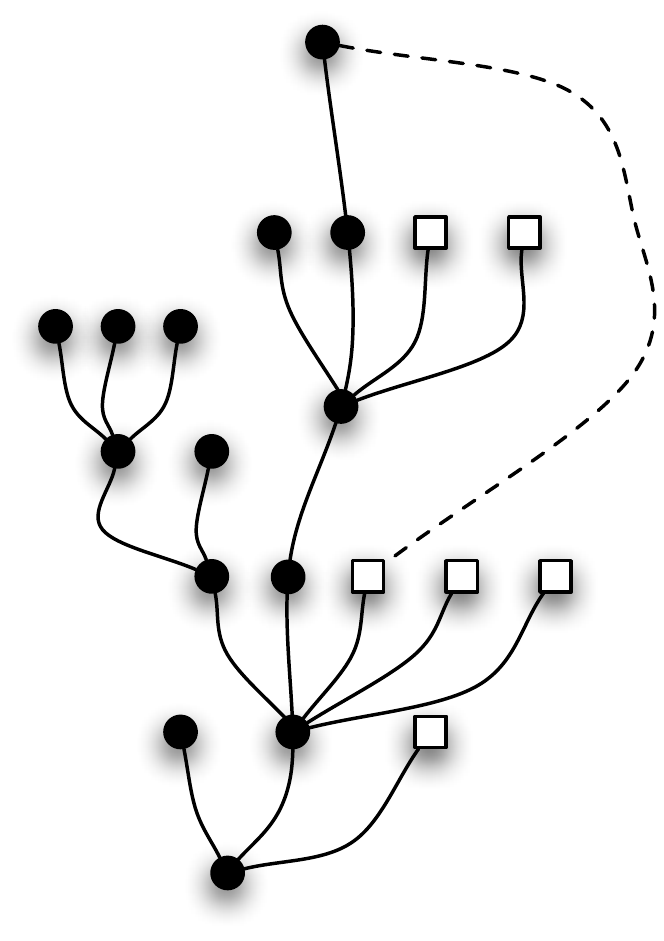}}
\put(230,40){$G^X$}
\put(350,0){\includegraphics[scale=.5]{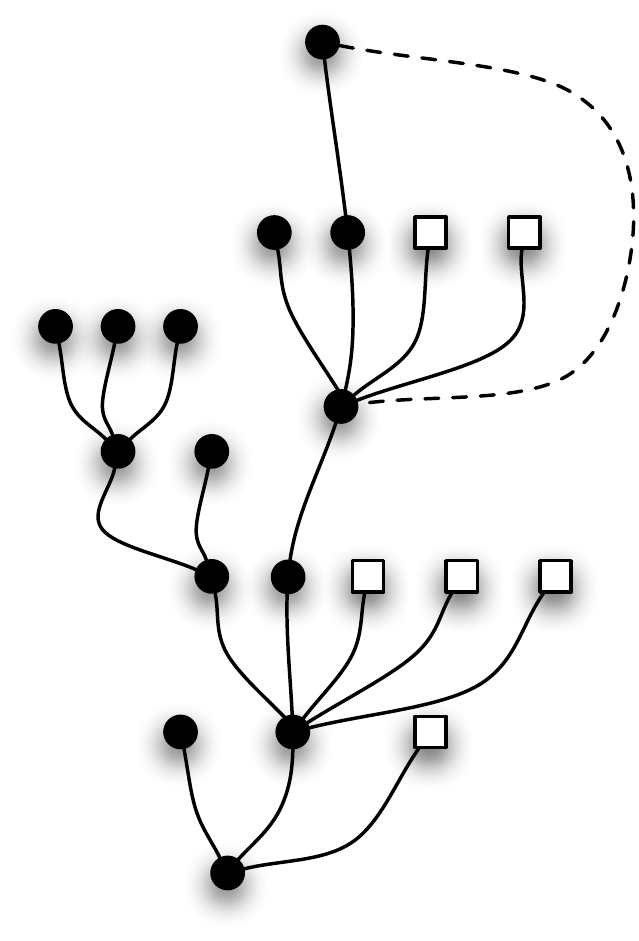}}
\put(163,20){$1$}
\put(163,35){$2$}
\put(163,49){$3$}
\put(163,63){$4$}
\put(163,77){$5$}
\put(163,91){$6$}
\put(290,28){$1$}
\put(270,60){$2$}
\put(285,60){$3$}
\put(300,60){$4$}
\put(280,109){$5$}
\put(295,109){$6$}
\put(360,40){$G^H$}
\put(256,130){$i$}
\put(386,130){$i$}
\end{picture}
\caption{\label{fig:cycles}Part of the depth-first walk of a tree $T$ is shown with the pointset $\mathcal Q = \{(i,2)\}$. In the walk picture, the boxes represent the nodes in $\mathcal O_i$. In the tree $T$, the black vertices have been fully explored. The two graphs $G^X(T,\mathcal Q)$ and $G^H(T,\mathcal Q)$ resulting from the correspondences with marked depth-first walk $X$ and height process $H$, respectively, are presented: on the left, $G^X(T,\mathcal Q)$ is obtained from $T$ by joining $v_i$ and the second vertex from the end of $\mathcal O_i$; on the right, $G^H(T,\mathcal Q)$ is obtained by joining $v_i$ and the vertex at distance 3 from the root on the to $v_i$ in $T$.}
\end{figure}

\medskip
The graphs $G^H(\tilde T_m^p,\mathcal Q^p)$ and $G^X(\tilde T^p_m, \mathcal Q^p)$ turn out to be very similar, and $G^H(\tilde T_m^p, \mathcal Q^p)$ is much easier to analyze than $G^X(\tilde T_m^p,\mathcal Q^p)$. In particular, the limiting object corresponding to $G^H(\tilde T_m^p, \mathcal Q^p)$ is very natural.  Write $\mathscr{L}$ for Lebesgue measure on $\R^+ \times \R^+$.

\begin{lem}\label{lem:conv_XP_exc_x} 
Let $p=p(m)$ be such that $m p^{2/3} \to \sigma$ as $m\to\infty$. Pick a labeled tree $\tilde T_m^p$ on $[m]$ in such a way that $\Probc{\tilde T_m^p=T}\propto (1-p)^{-a(T)}$ and let $\tilde{H}^m$ be the associated height process. Let $\mathcal Q^p\subset \Z^+\times \Z^+$ be a Binomial pointset of intensity $p$. Let $\mathcal P_m=\{((m/\sigma)^{-1} i, (m/\sigma)^{-1/2}j): (i,j)\in \mathcal Q^p\}$.  Then
\[
(((m/\sigma)^{-1/2} \tilde H^m(\fl{(m/\sigma)t}), 0\le t\le \sigma), \mathcal P_m \cap ((m/\sigma)^{-1/2} \tilde H^m(\fl{(m/\sigma)t})/2, 0\le t\le \sigma))\convdist (2\tilde e^{(\sigma)}, \mathcal P \cap \tilde e^{(\sigma)})
\]
as $n \to \infty$, where $\mathcal P$ is a homogeneous Poisson point process with intensity measure $\mathscr{L}$ on $\R^+ \times \R^+$, and $\mathcal P$ is independent of $\tilde{e}^{(\sigma)}$.
Convergence in the first co-ordinate is in $\mathbb{D}([0,\sigma],\mathbb{R}^+)$, and in the second co-ordinate is in the sense of the Hausdorff distance. 
\end{lem}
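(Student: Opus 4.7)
The strategy is to upgrade the marginal convergence of the height process (Theorem~\ref{thm:conv_marked_tiltedh}) to joint convergence with the rescaled Binomial pointset, and then deduce convergence of the intersection via a Skorohod coupling. Throughout write $h_m := (m/\sigma)^{-1/2}\tilde H^m(\fl{(m/\sigma)\cdot})$; by Theorem~\ref{thm:conv_marked_tiltedh}, $h_m \convdist 2\tilde e^{(\sigma)}$ in $\mathbb D([0,\sigma],\R^+)$.

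First I would establish that $(h_m,\mathcal P_m)\convdist (2\tilde e^{(\sigma)},\mathcal P)$ jointly. The second marginal is a Bernoulli point process on the grid $(m/\sigma)^{-1}\Z^+\times (m/\sigma)^{-1/2}\Z^+$ with inclusion probability $p\to 0$; the expected number of points in any compact rectangle $R$ is $p$ times the number of grid points in $R$, which converges to $\mathscr L(R)$ because $p(m/\sigma)^{3/2}\to 1$. The standard Poisson limit theorem for null arrays then gives $\mathcal P_m \convdist \mathcal P$ in the vague topology on locally finite simple point measures on $\R^+\times \R^+$. Since $\mathcal Q^p$ is independent of $\tilde T_m^p$ at each $m$, the joint convergence follows, with $\mathcal P$ independent of $\tilde e^{(\sigma)}$. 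By the Skorohod representation theorem I pass to a probability space on which $h_m\to 2\tilde e^{(\sigma)}$ uniformly and $\mathcal P_m\to\mathcal P$ vaguely, almost surely.

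It remains to show $\mathcal P_m\cap(h_m/2)\to \mathcal P\cap\tilde e^{(\sigma)}$ in Hausdorff distance almost surely. The main obstacle is controlling points of $\mathcal P$ that lie near the graph $\Gamma=\{(t,\tilde e^{(\sigma)}(t)):0\le t\le\sigma\}$. I would handle this via the following observation: $\tilde e^{(\sigma)}$ is almost surely continuous, so $\Gamma$ has two-dimensional Lebesgue measure zero, and conditioning on $\tilde e^{(\sigma)}$ shows $\mathcal P\cap\Gamma=\emptyset$ a.s. Fix $M>\|\tilde e^{(\sigma)}\|$ and let $R=[0,\sigma]\times[0,M]$; then $\mathcal P\cap R$ is finite, with every point lying at some minimum positive vertical distance $\eta>0$ from $\Gamma$. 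For $m$ large, $\|h_m/2-\tilde e^{(\sigma)}\|<\eta/3$, and the vague convergence $\mathcal P_m\to\mathcal P$ on $R$ (whose boundary a.s.\ contains no point of $\mathcal P$) induces a bijection between $\mathcal P_m\cap R$ and $\mathcal P\cap R$ via pairs of points at distance $o(1)$. Combining these two facts, each point of $\mathcal P\cap R$ lies below $\tilde e^{(\sigma)}$ if and only if its partner in $\mathcal P_m$ lies below $h_m/2$, so the two intersections are in bijection through arbitrarily close pairs. Letting $M\to\infty$ yields the claimed Hausdorff convergence, and since the limit $(2\tilde e^{(\sigma)},\mathcal P\cap\tilde e^{(\sigma)})$ does not depend on the particular coupling, the distributional statement follows.
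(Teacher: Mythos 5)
Your proposal is correct and follows essentially the same route as the paper: Poisson approximation of the rescaled Binomial pointset (via convergence of the Binomial counting variables on disjoint sets), joint convergence by independence, and then Hausdorff convergence of the intersection using the fact that the limiting Poisson process almost surely puts no points on the graph of the continuous limit excursion. The paper packages the last step as a deterministic continuity statement for the map $(f,\mathcal Q)\mapsto \mathcal Q\cap f$ under uniform convergence (plus the lattice-interpolation remark), whereas you use a Skorohod coupling and explicit point-matching, but these are the same argument in different clothing.
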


\begin{proof} We assume for notational simplicity that $\sigma=1$; the result for general $\sigma$ follows by Brownian scaling. (Note that the point process is rescaled independently of $\sigma$.)
Let $k \geq 1$ and let $A_1, A_2, \ldots, A_k \subset [0,1] \times \R^+$ be disjoint measurable sets.  Then for any $n \geq 1$ and any $1 \le i \le k$, the discrete counting function
\[
N_m(A_i)=\#\{(\fl{mx}, \fl{m^{1/2}y})\in \mathcal Q^p: (x,y)\in A_i\}
\]
is a Binomial random variable with parameters $\eta_{m}(A_i) = \#\{(\fl{m x}, \fl{m^{1/2} y}): (x,y) \in A_i, 0<\fl{m^{1/2}y}\le \fl{m x}\}$ and $p$.  Since 
\[
m^{-3/2} \eta_{m}(A_i) \to \mathscr{L}(A_i)
\]
and $m^{3/2} p \to 1$ as $m \to \infty$, $N_m(A_i) \convdist \mathrm{Poisson}(\mathscr{L}(A_i))$ for $1 \leq i \leq k$.  Moreover, the random variables $N_m(A_1), N_m(A_2), \ldots, N_m(A_k)$ are independent, since they count the points of $\mathcal Q^p$ in disjoint sets.  Thus $\mathcal P_m \to \mathcal P$ in distribution \cite{CoIs1980,kingman92poisson}.  

Suppose now that for each $m \geq 1$, $f_m : [0,1] \to \R^+$ is a continuous function, and that $f_m$ converges uniformly to some function $f: [0,1] \to \R^+$.  Then for any open set $A \subset [0,1] \times \R^+$, $\{(x,y) \in A: 0 <  y < f_m(x)\} \to \{(x,y) \in A: 0 < y < f(x)\}$ (in the sense of the Hausdorff distance).  It follows that $\mathcal P_m \cap f_m \to \mathcal P \cap f$ in distribution, since the Poisson process almost surely puts no points in the set $\{(x,y) \in A: y = f(x)\}$.

Now suppose that $g_m:\{0,1,\dots, m\}\to\Z^+$ and $(m^{-1/2}g_m(\fl{m t}), 0 \le t \le 1) \to (g(t), 0 \le t \le 1)$ in $\mathbb{D}([0,1],\R^+)$, where $g$ is continuous.  Then letting $\tilde{g}_m: [0,1] \to \R^+$ be the continuous interpolation of $(m^{-1/2} g_m(m t): t=0, m^{-1}, 2m^{-1},\dots, 1)$, we also have $\tilde{g}_m \to g$ uniformly.  Moreover, 
$$\mathcal{P}_n \cap (m^{-1/2}g_m(\fl{m t}), 0 \le t \le 1) = \mathcal{P}_m \cap \tilde{g}_m$$
since the functions agree at lattice points.  So we obtain that $\mathcal{P}_m \cap (m^{-1/2}g_m(\fl{m t}), 0 \le t \le 1) \convdist \mathcal{P} \cap g$.

Finally, since $\mathcal P_m$ and $\tilde{H}^m$ are independent, and $(m^{-1/2} \tilde H^m(\fl{m t}), 0\le t\le 1) \to (2\tilde e(t), 0\le t\le 1)$ in distribution by Theorem \ref{thm:conv_marked_tiltedh}, it follows easily that 
$$\mathcal P_m \cap (m^{-1/2}\tilde H^m(\fl{m t})/2, 0\le t\le 1) \convdist \mathcal P \cap \tilde e,$$ 
jointly with the convergence of the height process.
\end{proof}

\subsection{The limit object} \label{subsec:limob}

For $p=p(n)= 1/n+\lambda n^{-4/3}$, $\lambda \in \R$, any single one of the largest components of the random graph $G(n,p)$ has (random) size $m = m(n) \sim \sigma n^{2/3}$ \cite{Bollobas2001, janson00random, aldous97brownian}. Conditioned on having size $m$, such a connected component $G_m^p$ has the same distribution as $G^X(\tilde T_m^p, \mathcal Q^p)$.  As we have already remarked, Lemma~\ref{lem:dist_tilted_Xh} shows that $\tilde{X}^m$ and $\tilde{H}^m/2$ are close for large $m$.  Suppose now that we can prove that, for large $m$,  $G^X(\tilde T_m^p, \mathcal Q^p)$ and $G^H(\tilde T_m^p, \mathcal Q^p)$ are also ``close" in an appropriate sense.  Lemma~\ref{lem:conv_XP_exc_x} suggests that $G^H(\tilde T_m^p, \mathcal Q^p)$ should have a non-trivial limit in distribution when distances are rescaled by $n^{-1/3}$, which should, thus, also be the limit in distribution of $G_m^p$ similarly rescaled.  We will now define this limit object, $\mathcal{M}^{(\sigma)}$, by analogy with $G^H(\tilde T_m^p, \mathcal Q^p)$.

Let $\mathcal{T}$ be the real tree encoded by a height process $h$, as described in the introduction.  Recall that we think of $\mathcal{T}$ as a metric space, rooted at a vertex we call $\rho$.  Recall that $\tau$ denotes the canonical projection $\tau:[0,\sigma] \to \mathcal T$.  Let $\mathcal Q$ be a pointset in $\R^+\times \R^+$ such that there are only finitely many points in any compact set.  To each point $\xi = (\xi^x,\xi^y) \in \mathcal Q \cap (h/2)$, there corresponds a (unique) vertex $\tau(\xi^x) \in \mathcal{T}$ of height $h(\xi^x)$.  Now let $\hat{\tau}(\xi^x,\xi^y)$ be the vertex at distance $2 \xi^y$ from $\rho$ on the path $[[\rho,\tau(\xi^x)]]$.  Define a new ``glued'' metric space $g(h, \mathcal{Q})$ by identifying the vertices $\tau(\xi^x)$ and $\hat{\tau}(\xi^x,\xi^y)$ in $\mathcal{T}$, for each point $\xi \in \mathcal Q \cap (h/2)$, and taking the obvious induced metric.

Now let $\mathcal{P}$ be a Poisson point process with intensity measure $\mathscr{L}$ on $\R^+\times \R^+$, independent of $\tilde{e}^{(\sigma)}$, a tilted excursion of length $\sigma$.  Note that $\mathcal{P}$ almost surely only has finitely many points in any compact set.  Then we define the random metric space $\mathcal{M}^{(\sigma)} = g(2 \tilde{e}^{(\sigma)}, \mathcal{P})$ and write $\mathcal M = \mathcal M^{(1)}$.

In Theorem~\ref{thm:gh_connectedx} below, we will see that $\mathcal{M}^{(\sigma)}$ is indeed the scaling limit of a connected component of $G(n,p)$ conditioned to have size $m\sim \sigma n^{2/3}$.  In order to do this, we show that the metric spaces corresponding to $G^X(\tilde T_m^p, \mathcal Q^p)$ and $G^H(\tilde T_m^p, \mathcal Q^p)$ are very likely close in the Gromov--Hausdorff distance for large $m$, and that the metric space corresponding to $G^H(\tilde T_m^p, \mathcal Q^p)$ and $\mathcal{M}^{(\sigma)}$ are also close in the Gromov--Hausdorff distance for large $m\sim \sigma n^{2/3}$. 

The scaling limit of the whole of $G(n,p)$ is then be a collection of such continuous random components with random sizes; the components of size $o(n^{2/3})$ rescale to trivial continuous limits.
The proof of this is dealt with in Section~\ref{sec:random_graph_limit}. 

It is possible to give a more intuitively appealing description of $\mathcal M^{(\sigma)}$.  Take a continuum random tree with tilted distribution and independently select a random number $P$ of its leaves, each picked with density proportional to its height.  For each of these leaves there is a unique path to the root of the tree.  Independently for each of the selected leaves, pick a point uniformly along the path and identify the leaf and the selected point. Before we move on, we justify this description. 

\begin{prop} \label{lem:factsaboutlimit}
Using the same notation as above, the following statements hold.
\begin{compactenum}[1.]
\item Given $\tilde{e}^{(\sigma)}$, encoding the real tree $\tilde {\mathcal T}$, for any $(\xi^x,\xi^y) \in \mathcal P \cap \tilde e^{(\sigma)}$, $\xi^x$ has density
\[
\frac{\tilde{e}^{(\sigma)}(u)}{\int_0^\sigma \tilde{e}^{(\sigma)}(s) ds}
\]
on $[0,\sigma]$.  Moreover, the vertex $\tau(\xi^x)$ is almost surely a leaf of $\tilde{\mathcal{T}}$.
\item We have $d_{\tilde{\mathcal{T}}}(\rho, \hat{\tau}(\xi^x,\xi^y)) : = 2 \xi^y \equidist U d_{\tilde{\mathcal{T}}}(\rho, \tau(\xi^x))$, where $U$ is a uniform random variable on $[0,1]$, independent of $\xi^x$ and $\tilde{e}^{(\sigma)}$.
\item Given $\tilde{e}^{(\sigma)}$, the number of vertex identifications in $\mathcal{M}^{(\sigma)}$ has a Poisson distribution with mean $\int_0^\sigma \tilde{e}^{(\sigma)}(u) du$.
\end{compactenum}
\end{prop}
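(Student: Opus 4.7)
The plan is to extract all three statements directly from the Poisson structure of $\mathcal{P} \cap \tilde{e}^{(\sigma)}$ conditional on $\tilde{e}^{(\sigma)}$, together with absolute continuity of the tilted excursion law with respect to the law of the standard Brownian excursion. The starting observation is that, since $\mathcal{P}$ is independent of $\tilde{e}^{(\sigma)}$, conditional on $\tilde{e}^{(\sigma)}$ the set $\mathcal{P} \cap \tilde{e}^{(\sigma)}$ is a homogeneous Poisson point process of unit intensity on the (a.s.\ bounded) region $R := \{(x,y): 0 \le x \le \sigma,\, 0 < y \le \tilde{e}^{(\sigma)}(x)\}$. The third statement is then immediate: the number of points of $\mathcal{P} \cap \tilde{e}^{(\sigma)}$ is Poisson with mean $\mathscr{L}(R) = \int_0^\sigma \tilde{e}^{(\sigma)}(u)\,du$. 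It only remains to argue that the number of \emph{identifications} equals the number of Poisson points almost surely, i.e.\ that distinct Poisson points yield distinct pairs $(\tau(\xi^x), \hat{\tau}(\xi^x, \xi^y))$; this follows from the leaf claim in part~1 below, since the (a.s.\ distinct) $x$-coordinates of the Poisson points a.s.\ map to distinct leaves of $\tilde{\mathcal{T}}$.

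For part~1, conditional on $\tilde{e}^{(\sigma)}$ and on the number of points being $n\ge 1$, these $n$ points are i.i.d.\ uniform on $R$; marginalising in $y$ yields the stated density $\tilde{e}^{(\sigma)}(u)/\int_0^\sigma \tilde{e}^{(\sigma)}(s)\,ds$ for $\xi^x$ on $[0,\sigma]$. For the leaf assertion, by (\ref{eqn:changemeas}) the tilted excursion law is absolutely continuous with respect to the Brownian excursion law, so the almost-sure statement $\mu(\mathcal{L}(\mathcal{T})) = 1$ recalled from Aldous transfers to $\mu(\mathcal{L}(\tilde{\mathcal{T}})) = 1$ under the tilted law; equivalently, $\{u \in [0,\sigma] : \tau(u) \in \mathcal{L}(\tilde{\mathcal{T}})\}$ has full Lebesgue measure on $[0,\sigma]$ almost surely. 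Since $\tilde{e}^{(\sigma)}$ is a.s.\ bounded, the density of $\xi^x$ is bounded and in particular absolutely continuous with respect to Lebesgue on $[0,\sigma]$, so this full-measure set also has full mass under that density, whence $\tau(\xi^x)$ is a.s.\ a leaf.

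For part~2, by the definition of $\hat{\tau}$ we have $d_{\tilde{\mathcal{T}}}(\rho, \hat{\tau}(\xi^x, \xi^y)) = 2\xi^y$, while the encoding of $\tilde{\mathcal{T}}$ by $h = 2\tilde{e}^{(\sigma)}$ gives $d_{\tilde{\mathcal{T}}}(\rho, \tau(\xi^x)) = 2\tilde{e}^{(\sigma)}(\xi^x)$. Dividing the uniform joint density on $R$ by the marginal density of $\xi^x$ computed above shows that, conditional on $(\tilde{e}^{(\sigma)}, \xi^x)$, the coordinate $\xi^y$ is uniform on $(0, \tilde{e}^{(\sigma)}(\xi^x)]$. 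Hence $U := \xi^y / \tilde{e}^{(\sigma)}(\xi^x)$ is uniform on $[0,1]$ and independent of the pair $(\tilde{e}^{(\sigma)}, \xi^x)$, and the identity $2\xi^y = U \cdot 2\tilde{e}^{(\sigma)}(\xi^x)$ rewrites as the desired equality in distribution.

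The main (and only substantive) obstacle is the absolute continuity step used to transfer the leaf property from $\mathcal{T}$ to $\tilde{\mathcal{T}}$; everything else is bookkeeping with the conditional Poisson distribution of $\mathcal{P} \cap \tilde{e}^{(\sigma)}$ on $R$.
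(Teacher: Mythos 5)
Your proof is correct and follows essentially the same route as the paper's: the marginal density of $\xi^x$ and the conditional uniformity of $\xi^y$ come from the conditional Poisson structure of $\mathcal P \cap \tilde e^{(\sigma)}$ on the region under the excursion, the leaf property is transferred from $e^{(\sigma)}$ to $\tilde e^{(\sigma)}$ via absolute continuity and the fact that the mass measure is concentrated on the leaves, and the third statement is read off from the Poisson count. Your extra remark that distinct Poisson points a.s.\ give distinct identifications is a small point the paper leaves implicit, but it does not change the argument.
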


\begin{proof}
It is useful to keep in mind that there are two (independent) sources of randomness: one which gives rise to the excursion $\tilde{e}^{(\sigma)}$ (and hence the tree $\tilde{\mathcal{T}}$), and another which gives rise to the point process $\mathcal{P}$.

Given $\tilde{e}^{(\sigma)}$ and that $(\xi^x,\xi^y)$ is a point of $\mathcal P \cap \tilde e^{(\sigma)}$, it is straightforward to see that $\xi^x$ has claimed density on $[0,\sigma]$.  Hence, 
\[
\Cprobc{\tau(\xi^x) \in \mathcal{L}(\tilde{\mathcal{T}})}{\tilde{\mathcal{T}}} = \int_0^\sigma \I{\tau(u) \in \mathcal{L}(\tilde{\mathcal{T}})} \frac{\tilde{e}^{(\sigma)}(u)}{\int_0^x \tilde{e}^{(\sigma)}(s) ds}du.
\]
Recall that $\mu$ is the natural measure induced on $\tilde{\mathcal{T}}$ from Lebesgue measure on $[0,\sigma]$.  Since the distribution of $\tilde{e}^{(\sigma)}$ is absolutely continuous with respect to that of $e^{(\sigma)}$, a Brownian excursion of length $\sigma$, and $\mu(\mathcal L (\mathcal T))=\sigma$ \cite[][p. 60]{aldous91crt2}, we must also have $\mu(\mathcal{L}(\tilde{\mathcal{T}})) = \sigma$.  It follows that
\[
\Cprobc{\tau(\xi^x) \in \mathcal{L}(\tilde{\mathcal{T}})}{ \tilde{\mathcal{T}}} = 1
\]
for almost all $\tilde T$.  Hence, $\tau(\xi^x)$ is almost surely a leaf. 

Again using the fact that  $(\xi^x,\xi^y)$ is a point of $\mathcal P \cap \tilde{e}^{(\sigma)}$, given $\tilde{e}^{(\sigma)}$ and $\xi^x$, $\xi^y$ is uniformly distributed in $[0,e^{(\sigma)}(\xi^x)]$.  The second statement follows since $d_{\tilde {\mathcal T}}(\rho,\tau(\xi^x)) = 2 \tilde{e}^{(\sigma)}(\xi^x)$.  The third statement is immediate from the fact that the number of points in $\mathcal P \cap \tilde{e}^{(\sigma)}$ has a Poisson distribution with mean $\int_0^\sigma \tilde{e}^{(\sigma)}(u) du$.
\end{proof}

\subsection{Bounding the dissimilarity between $G^X$ and $G^H$}\label{sec:graphs_Xh}
In this section, we provide \emph{deterministic} bounds on the distances
\[
d_{GH}(G^X(T_1, \mathcal Q_1), G^H(T_1, \mathcal Q_1)) \qquad \mbox{and}\qquad d_{GH}(g(h_1, \mathcal Q_1), g(h_2, \mathcal Q_2))
\]
for fixed labeled trees $T_1$ and $T_2$, fixed height processes $h_1$ and $h_2$ and pointsets $\mathcal Q_1$ and $\mathcal Q_2$. These bounds are used in Section~\ref{sec:gh_connect} to obtain the scaling limit of connected components $G_m^p$. 

Consider the depth-first walk $(X(i), 0\le i<m)$ and the height process $(H(i), 0\le i<m)$ of a (deterministic) tree $T$ on $[m]$, together with a finite discrete point set $\mathcal Q \subset \Z^+ \times \Z^+$.  The two excursions and the pointset give rise to two graphs $G^X:=G^X(T,\mathcal Q)$ and $G^H:=G^H(T,\mathcal Q)$.  We will, for the moment, view $G^X$ and $G^H$ as metric spaces on $\{0,1,\ldots,m-1\}$.  We do this in the following way: first relabel the vertices of $T$ with $\{0,1,\ldots, m-1\}$ in depth-first order.  Then, in each case, add the relevant surplus edges and, finally, take the metrics to be the graph distances $d^X(\cdot,\cdot)$ and $d^H(\cdot,\cdot)$ induced by $G^X$ and $G^H$ respectively.  We will abuse notation by continuing to refer to these metric spaces as $G^X$ and $G^H$.  
Note that we have given an injection of the two graphs/metric spaces into a common set of points $V$ and so we can measure the distortion
\[
\sup_{x,y \in V} |d^H(x,y)-d^X(x,y)|
\] 
(with respect to this fixed injection) between them. This distortion provides an upper bound on the Gromov--Hausdorff distance (see, e.g., \cite{legall05survey}).

\begin{lem}\label{lem:gh_Xh_samepoints}Suppose that $\mathcal Q\cap X=\mathcal Q \cap (H/2)$, and write $k=|\mathcal Q \cap X|=|\mathcal Q \cap (H/2)|$. Then 
	$$d_{GH}(G^X, G^H)\le k (\|X-H/2\|+2).$$
\end{lem}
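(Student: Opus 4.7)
The plan is to view both $G^X$ and $G^H$ as metric spaces on the common vertex set $V=\{v_0,v_1,\ldots,v_{m-1}\}$ labelled in $\oDFS$ order and to control their Gromov--Hausdorff distance via the distortion of the diagonal correspondence $\mathcal{R}=\{(v,v):v\in V\}$. A standard bound (see, e.g., \cite{legall05survey}) will give $d_{GH}(G^X,G^H) \le \tfrac{1}{2}\sup_{u,w\in V}|d^X(u,w)-d^H(u,w)|$. The two graphs share the tree $T$ and differ only in the second endpoint of each of the $k$ surplus edges indexed by $(i,j)\in \mathcal{Q}\cap X = \mathcal{Q}\cap(H/2)$: $G^X$ adds $\{v_i, w^X_{i,j}\}$ while $G^H$ adds $\{v_i, w^H_{i,j}\}$. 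Given any shortest path in $G^X$, I will mimic it in $G^H$ by replacing each use of $\{v_i, w^X_{i,j}\}$ by the concatenation of $\{v_i, w^H_{i,j}\}$ with the tree path between $w^H_{i,j}$ and $w^X_{i,j}$ (and symmetrically in the reverse direction). Since a shortest path uses each surplus edge at most once, the distortion will be at most $k\cdot \max_{(i,j)} d_T(w^X_{i,j}, w^H_{i,j})$, and everything reduces to a tree-distance estimate.

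The heart of the proof will be the claim that $d_T(w^X_{i,j}, w^H_{i,j}) \le 2\|X-H/2\|+2$. To prove it I will exploit the structure of the stack $\mathcal{O}_i$. Write $a_0=\rho, a_1, \ldots, a_{H(i)}=v_i$ for the ancestors of $v_i$, and for $\ell\in\{1,\ldots,H(i)\}$ let $s_\ell$ denote the number of ``younger siblings'' of $a_\ell$ (children of $a_{\ell-1}$ with label larger than that of $a_\ell$) that sit in $\mathcal{O}_i$; all such vertices lie at depth $\ell$. Because $\oDFS$ explores children in increasing label order, an easy induction shows that the depths within $\mathcal{O}_i$ are monotone non-decreasing from bottom to top, so the $j$-th vertex from the bottom lies at depth $\ell^*(j):=\min\{\ell\ge 1 : S(\ell)\ge j\}$, where $S(\ell):=s_1+\cdots+s_\ell$. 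The crucial observation is that when $\oDFS(T)$ visits $a_\ell$ at the step $i_\ell$, the stack $\mathcal{O}_{i_\ell}$ consists of $a_\ell$ on top of exactly those younger siblings of $a_1,\ldots,a_\ell$ that also appear in $\mathcal O_i$; hence $H(i_\ell)=\ell$ and $X(i_\ell)=S(\ell)$. The global estimate $|X(i_\ell)-H(i_\ell)/2|\le \|X-H/2\|$ applied at every $i_\ell$ therefore yields $|S(\ell)-\ell/2|\le \|X-H/2\|$ for all $\ell\le H(i)$, and inverting this monotone relation between $S$ and $\ell^*$ gives $|\ell^*(j)-2j|\le 2\|X-H/2\|+1$.

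To conclude, I will observe that $w^H_{i,j}=a_{2j-1}$ sits on the path $[\rho, v_i]$ at depth $2j-1$, while $w^X_{i,j}$ is a child of the path-vertex $a_{\ell^*(j)-1}$ at depth $\ell^*(j)$. A direct lowest-common-ancestor computation then gives $d_T(w^X_{i,j}, w^H_{i,j}) = |\ell^*(j)-2j|+1 \le 2\|X-H/2\|+2$. Feeding this into the path-replacement bound shows that the distortion is at most $k(2\|X-H/2\|+2)$, and halving yields $d_{GH}(G^X, G^H)\le k(\|X-H/2\|+1)\le k(\|X-H/2\|+2)$. The hard part of the plan is the middle paragraph: pinpointing the identities $H(i_\ell)=\ell$ and $X(i_\ell)=S(\ell)$ from the $\oDFS$ dynamics, and inverting $S(\ell)\approx\ell/2$. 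Once those are in hand, the global control on $X-H/2$ transfers automatically to pointwise control on the depth of the $j$-th vertex from the bottom of the stack, which is precisely the geometric content of the lemma.
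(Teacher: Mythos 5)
Your proposal is correct and follows essentially the same route as the paper's proof: bound $d_{GH}(G^X,G^H)$ by the distortion of the identity correspondence, mimic a shortest $G^X$-path in $G^H$ by rerouting each surplus edge through the tree path between $w^X_{i,j}$ and $w^H_{i,j}$, and bound that tree distance by $2\|X-H/2\|+2$ via the structure of the stack $\mathcal O_i$. The only (immaterial) difference is where the sup bound is invoked: the paper evaluates the walk at the time $w^X_{i,j}$ is itself visited, noting that its index from the bottom of $\mathcal O_i$ equals $X$ at that time plus one, whereas you evaluate $X$ and $H$ at the ancestors' visiting times $i_\ell$ and invert $\ell \mapsto S(\ell)$; both yield the same estimate.
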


\begin{figure}
\centering
\begin{picture}(420,110)
\put(18,0){\includegraphics[scale=0.8]{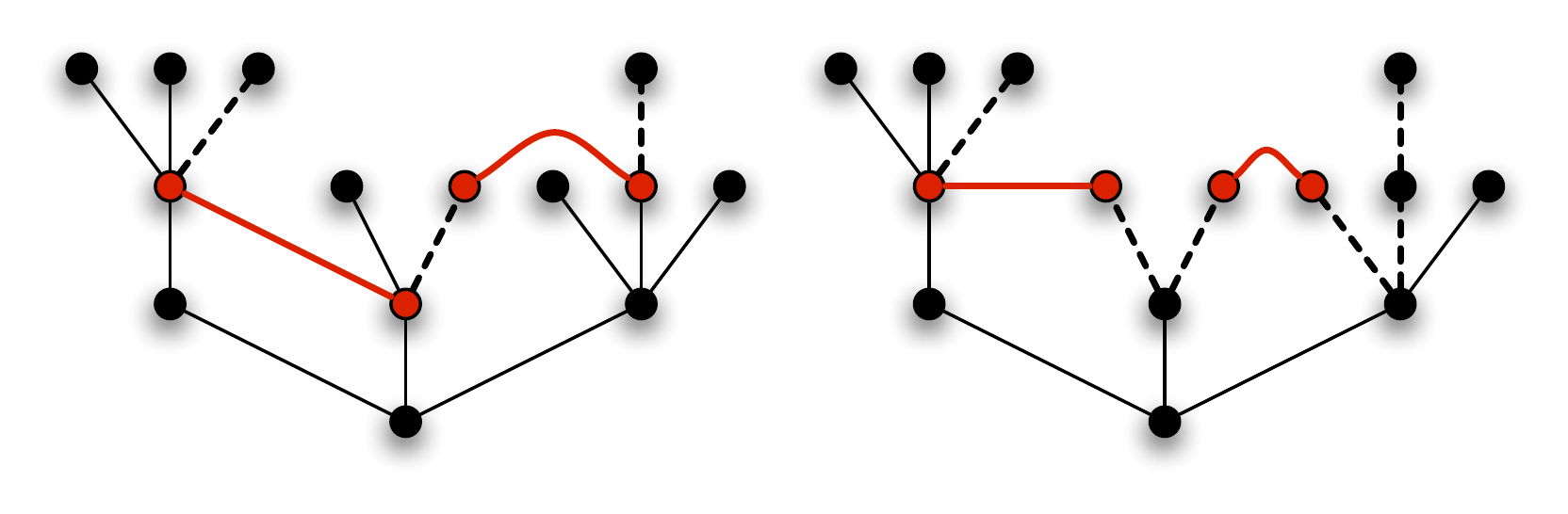}}
\put(70,110){$x$}
\put(162,110){$y$}
\put(44,82){$u_1$}
\put(117,82){$u_2$}
\put(181,82){$w_2$}
\put(231,81){$u_1$}
\put(273,86){$w_1^H$}
\put(125,53){$w_1$}
\put(303,82){$u_2$}
\put(345,82){$w_2^H$}
\put(276,110){$x$}
\put(369,110){$y$}
\end{picture}
\caption{\label{fig:shortcuts}Left: a graph $G^X$ and the shortest path between vertices $x,y$. Right: the corresponding path (which is no longer a shortest path) between $x$ and $y$ in the graph $G^H$. 
In both graphs, the shortcut edges are bold, and the remaining edges of the paths are dashed.}
\end{figure}

\begin{proof}
For the present proof, we will think of a path in a graph as an ordered list of vertices (which may contain repeats) which is such that there is an edge present between each pair of adjacent vertices.  We will write $\oplus$ for the operation of concatenation on ordered lists so that, for example, $(1,2,3) \oplus (2,4) = (1,2,3,2,4)$.  Fix two vertices $x$ and $y$, and consider a shortest path $\pi^X(x,y)$ between $x$ and $y$ in $G^X$. Let $(u_i, w_i)$, $0\le i \le \ell$ be the sequence of endpoints of the surplus edges in $\pi^X(x,y)$ in the order in which they appear when going from $x$ to $y$. Then we have 
\[
\pi^X(x,y)= \pi_T(x,u_1) \oplus \bigoplus_{i=1}^{\ell-1} \Big( (u_i,w_i) \oplus \pi_T(w_i,u_{i+1})\Big) \oplus (u_{\ell}, w_{\ell}) \oplus \pi_T(w_\ell, y),
\]
where $\pi_T(u,v)$ denotes the unique shortest path between $u$ and $v$ in the tree $T$. 
We will now define a path $\pi^H(x,y)$ in $G^H$ between the nodes $x$ and $y$. The path $\pi^H(x,y)$ will not necessarily be a shortest path from $x$ to $y$ in $G^H$, but it will go through the  vertices $u_i, w_i, 1\le i\le \ell$ in the order in which they appear in $\pi^X(x,y)$. (The relation between the paths $\pi^X(x,y)$ and $\pi^H(x,y)$ is depicted in Figure \ref{fig:shortcuts}.)  In particular, let 
\[
\pi^H(x,y)=\pi_T(x,u_1) \oplus \bigoplus_{i=1}^{\ell-1} \Big(\pi^{X,H}(u_i,w_i) \oplus \pi_T(w_i,u_{i+1}) \Big) \oplus \pi^{X,H}(u_{\ell},w_{\ell}) \oplus \pi_T(w_\ell, y),
\]
where $\pi^{X,H}(u_i,w_i)$, $1\le i\le \ell$, are paths that we will define in a moment.
We clearly have
\begin{equation}\label{eq:path_sum_edges}
|\pi^H(x,y)|-|\pi^X(x,y)| = \sum_{i=1}^\ell (|\pi^{X,H}(u_i,w_i)|-1).
\end{equation}
For $1\le i\le \ell$, $u_i$ and $w_i$ are joined in $G^X$ by an edge corresponding to a point $\xi_i\in \mathcal Q\cap X$. By assumption, $\xi_i\in \mathcal Q \cap (H/2)$ so $\xi_i$ also induces an edge $f_i$ in $G^H$ and, by definition, at least one of $u_i$ and $w_i$ must be an endpoint of $f_i$. Assume that $f_i$ joins $u_i$ to a vertex $w_i^H$ (an analogous argument works if $f_i$ is instead attached to $w_i$). Then we define $\pi^{X,H}(u_i,w_i)$ to be the path that consists of the edge $(u_i,w^H_i)$ followed by the shortest path $\pi_T(w_i^H, w_i)$ between $w_i^H$ and $w_i$ in $T$. Then, we have
\begin{equation}\label{eq:path_edge}|
	\pi^{X,H}(u_i,w_i)| -1 = |\pi_T(w_i^H, w_i)|.
\end{equation}
Now, by definition, $w_i^H$ lies on the path $\pi_T(\rho,u_i)$ in $T$ between $u_i$ and the root of $T$. Since when $u_i$ is the first element of the $\oDFS$ stack, the other elements are all at distance 1 from the path from the root to $u_i$, $w_i$ has a neighbor $\overleftarrow{w_i}$ on that same path (see Figure~\ref{fig:cycles}). It follows that $|\pi_T(w_i^H,w_i)| \le |H(\overleftarrow{w_i})-H(w_i^H)|+1.$
The edges $\{u_i,w_i\}$ in $G^X$ and $\{u_i,w_i^H\}$ in $G^H$ correspond to the same point $\xi_i\in \mathcal Q$, so $H(w_i^H)$ is also twice the index of $w_i$ (in reverse order) in the set $\mathcal O_i$. This number is exactly $2X(w_i)+2$ since the nodes in $\mathcal O_i$ which precede $w_i$ in $\oDFS$ order are exactly those that are still to explore when we arrive at $w_i$, and there are $X(w_i)$ of them.  Then,
\[
|\pi_T(w_i^H,w_i) |\le |H(\overleftarrow{w_i})-2X(w_i)|+3 \le |H(w_i)-2X(w_i)|+4 \le \|H-2X\|+4.
\]
Using this bound together with (\ref{eq:path_sum_edges}) and (\ref{eq:path_edge}), we finally obtain that
$$
d^H(x,y)-d^X(x,y)\le 2\ell (\|X - H/2\|+2) \le 2 k(\|X - H/2\| +2).
$$
A symmetric argument shows that $d^X(x,y)-d^H(x,y)\le 2k(\|X - H/2\|+2)$.  The claim then follows since the Gromov--Hausdorff distance is bounded by the distortion, i.e.,  $d_{GH}(G^X,G^H) \le \frac 1 2 \sup_{x,y} |d^X(x,y)-d^H(x,y)|$ (see, for example, \cite{legall05survey}).
\end{proof}

When the pointsets used to define two graphs are not identical, we measure their dissimilarity with the Hausdorff distance.  Let $\mathcal T_1$ and $\mathcal T_2$ be two real trees encoded by the height processes $(h_1(s), 0\le s\le \sigma)$ and $(h_2(s), 0\le s\le \sigma)$ respectively.  Let $\tau_1: [0,\sigma] \to \mathcal{T}_1$ and $\tau_2: [0,\sigma] \to \mathcal{T}_2$ be the canonical projections, as discussed in the introduction, and let $\rho_1 = \tau_1(0)$, $\rho_2 = \tau_2(0)$.  Write $d_{1}$ and $d_{2}$ for the pseudometrics on $[0,\sigma]$ given by
\begin{align*}
d_{1}(s,t) & = h_1(s) + h_1(t) - 2 \inf_{s \wedge t \le r \le s \vee t} h_1(r) \\
d_{2}(s,t) & = h_2(s) + h_2(t) - 2 \inf_{s \wedge t \le r \le s \vee t} h_2(r),
\end{align*}
and write $d_{\mathcal T_1}$ and $d_{\mathcal T_2}$ for the metrics induced on $\mathcal T_1$ and $\mathcal T_2$ respectively.  Suppose that $\mathcal Q_1$ and $\mathcal Q_2$ are two pointsets in $\R^+\times \R^+$.  Let $G_1=g(h_1, \mathcal Q_1)$ and $G_2=g(h_2, \mathcal Q_2)$.

\begin{lem}\label{lem:bound_gh_real}
Suppose that $k=|\mathcal Q_1 \cap (h_1/2)|=|\mathcal Q_2 \cap (h_2/2)|$ and $\delta = d_{H}(\mathcal Q_1\cap (h_1/2), \mathcal Q_2 \cap (h_2/2))$.  Then,
\[
d_{GH}(G_1, G_2) \le \frac{1}{2} (k+1) \pran{\delta + 12 \|h_1-h_2\|+4 \sup_{|r-r'|\le \delta}|h_2(r)-h_2(r')|}.
\]
\end{lem}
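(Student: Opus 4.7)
The plan is to build a correspondence $\mathcal{R} \subseteq G_1 \times G_2$ via the canonical projections and to bound its distortion, since $d_{GH}(G_1, G_2) \le \tfrac{1}{2} \mathrm{dis}(\mathcal{R})$ for any correspondence $\mathcal{R}$ containing the pair of roots. I would take $\mathcal{R} = \{(\tau_1(s), \tau_2(s)) : s \in [0, \sigma]\}$, where $\tau_i(s)$ is identified with its image in $G_i$; this contains $(\rho_1, \rho_2)$ via $s = 0$.

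Fix $s, t \in [0, \sigma]$, write $\omega = \sup_{|r-r'| \le \delta} |h_2(r) - h_2(r')|$, and, without loss of generality, assume $d_{G_1}(\tau_1(s), \tau_1(t)) \ge d_{G_2}(\tau_2(s), \tau_2(t))$. Let $\pi_2$ be a geodesic in $G_2$ from $\tau_2(s)$ to $\tau_2(t)$; since $G_2$ is built from $\mathcal{T}_2$ by identifying $k$ pairs of points, $\pi_2$ uses some $\ell \le k$ gluings $\xi'_1, \ldots, \xi'_\ell \in \mathcal{Q}_2 \cap (h_2/2)$ in order, and decomposes as $\ell + 1$ geodesic segments in $\mathcal{T}_2$ joined at the identified endpoints. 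By the Hausdorff distance hypothesis, each $\xi'_i$ admits a matched $\xi_i \in \mathcal{Q}_1 \cap (h_1/2)$ with $|\xi_i - \xi'_i|_\infty \le \delta$. I would then build a path $\pi_1$ in $G_1$ from $\tau_1(s)$ to $\tau_1(t)$ mirroring $\pi_2$: use the matched gluings $\xi_1, \ldots, \xi_\ell$ in the same order and with the same orientations (entry and exit at $\tau_1(\xi^x_i)$ or $\hat\tau_1(\xi_i)$), connected by tree geodesics in $\mathcal{T}_1$.

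The main computation is to bound $|\pi_1| - |\pi_2|$ by comparing the $\ell + 1$ pairs of corresponding tree segments. Using the standard formulas
\[
d_{\mathcal{T}}(\tau(a), \tau(b)) = h(a) + h(b) - 2 \inf_{[a \wedge b, a \vee b]} h, \qquad d_{\mathcal{T}}(\hat\tau(\xi), \tau(b)) = 2\xi^y + h(b) - 2 \min\!\left(2\xi^y, \inf_{[\xi^x \wedge b, \xi^x \vee b]} h\right),
\]
and the analogous expression for $d_{\mathcal{T}}(\hat\tau(\xi), \hat\tau(\eta))$, each pairwise comparison decomposes into differences of $\tau$-endpoint heights (each bounded by $\|h_1 - h_2\| + \omega$, since the parametrizing coordinate shifts by at most $\delta$), differences of $\hat\tau$-heights (each bounded by $2\delta$, since the $y$-coordinate shifts by at most $\delta$), and differences of infima over intervals whose endpoints are within $\delta$ of each other (again bounded by $\|h_1 - h_2\| + \omega$ by taking a minimiser of one and applying the modulus of continuity to transport it into the other interval). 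Summing these contributions over the $\ell + 1 \le k + 1$ segments and collecting coefficients yields $|\pi_1| \le |\pi_2| + (k+1)(\delta + 12\|h_1 - h_2\| + 4\omega)$. Since $d_{G_1}(\tau_1(s), \tau_1(t)) \le |\pi_1|$, this bounds the distortion of $\mathcal{R}$ and gives the stated bound on $d_{GH}(G_1, G_2)$.

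The hard part is the careful bookkeeping at the $\hat\tau$-type gluing endpoints. Unlike $\tau$-endpoints, these are not placed in $\mathcal{R}$; each is specified by both a branch in the tree (through $\xi^x$) and a specific height ($2\xi^y$), both of which shift when $\xi_i$ is replaced by $\xi'_i$, and these shifts propagate through the $\min$ and $\inf$ terms in the distance formulas. Keeping the $\delta$-contribution clean (rather than a larger multiple) and producing the specific constants $12$ and $4$ requires separating the contributions of the different shifts without letting them compound, which is the most delicate part of the estimate.
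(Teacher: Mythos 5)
Your overall strategy is the same as the paper's: take the correspondence $\mathcal R=\{(\tau_1(s),\tau_2(s)):0\le s\le\sigma\}$, decompose a geodesic in one glued space at the $\ell\le k$ shortcuts it uses, mirror it in the other space using the $\delta$-matched gluings in the same order and orientation, and bound the length difference. The place where you depart from the paper is exactly the place you flag as ``the most delicate part'' and then do not carry out: you propose to compare the $\ell+1$ pairs of tree segments \emph{directly} via the height-process distance formulas, tracking how the $x$- and $y$-shifts of the gluing points propagate through heights, infima over shifted intervals, and the $\min$ in the $\hat\tau$ formula. As you note yourself, done naively this compounds: each segment whose endpoint is a $\hat\tau$-point picks up multiples of $\delta$ both from the explicit $2\xi^y$ term and from the $\min(2\xi^y,\inf h)$ term, and the interval-shift in the infimum costs another $\omega$ per endpoint, so the per-segment bound is not obviously $\delta+O(\|h_1-h_2\|)+O(\omega)$. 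The assertion that ``collecting coefficients yields'' precisely $(k+1)(\delta+12\|h_1-h_2\|+4\omega)$ is therefore unsubstantiated; this is the entire content of the lemma, so the proof is incomplete as it stands.

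The paper avoids the compounding with a small structural device you may want to adopt: rather than running the mirrored segment directly between the matched shortcut endpoints, it routes the new path through the points of $\mathcal T_2$ that \emph{correspond} (under $\mathcal R$) to the segment endpoints in $\mathcal T_1$, and then inserts explicit bridging geodesics $[[\tau_2(s_i),\tau_2(\tilde s_i)]]$ and $[[\tau_2(\tilde t_i),\tau_2(t_i)]]$ to reach the actual endpoints of the matched shortcut. This cleanly separates the estimate into (i) tree-distance distortion between genuinely corresponding points, each term bounded by $4\|h_1-h_2\|$ with no $\delta$ or $\omega$ appearing, and (ii) one pair of bridging terms per shortcut, bounded by $2\omega$ for the $\tau$-endpoints and by either $2\omega$ or the difference of the $y$-coordinates for the $\hat\tau$-endpoints, giving $\delta+4\omega$ per shortcut in total. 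Two further points: your ``without loss of generality $d_{G_1}\ge d_{G_2}$'' cannot be fixed globally over all pairs $(s,t)$, and the symmetric direction forces you to convert $\sup_{|r-r'|\le\delta}|h_1(r)-h_1(r')|$ into $2\|h_1-h_2\|+\omega$ --- this conversion is precisely where the coefficient $12$ (rather than $4$) on $\|h_1-h_2\|$ comes from, and your argument as written does not account for it. None of this invalidates your plan --- it would certainly yield a bound of the form $C(k+1)(\delta+\|h_1-h_2\|+\omega)$, which is all the applications need --- but the specific constants claimed are not derived.
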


\begin{proof}The bound is again obtained by estimating the distortion induced by the natural correspondence $\mathcal R = \{(\tau_1(s),\tau_2(s)): 0\le s\le \sigma\}$ between the vertices of $\mathcal T_1$ and $\mathcal T_2$.  For $s,t \in [0,\sigma]$, let $\pi^1(s,t)$ be a shortest path between $\tau_1(s)$ and $\tau_1(t)$ in $G_1$ (we consider this path to be an ordered continuous set of vertices).  
The argument is now roughly the same as that of Lemma \ref{lem:gh_Xh_samepoints}: we construct a path $\pi^2(s,t)$ from $\tau_2(s)$ to $\tau_2(t)$ that is not too much longer than $\pi^1(s,t)$, by ``splitting $\pi^1(s,t)$ up at the shortcuts''. 

Parts of $\pi^1(s,t)$ follow geodesics in the real tree $\mathcal T_1$, and translating these to $\mathcal T_2$ poses no challenge not met in Lemma \ref{lem:gh_Xh_samepoints}.  
The difficulty arises when $\pi^1(s,t)$ uses shortcuts produced by the vertex identifications induced by $\mathcal Q_1\cap (h_1/2)$.  Of course, these shortcuts are not present in $G_2$, which has its own shortcuts.  
Recall that $|\mathcal Q_1\cap (h_1/2)| = |\mathcal Q_2\cap (h_2/2)|=k$, let $\xi_1,\xi_2,\ldots,\xi_k$ be the points of $\mathcal Q_1\cap (h_1/2)$, and let $\eta_1, \eta_2, \ldots, \eta_k$ be the points of $\mathcal Q_2 \cap (h_2/2)$.  Since $d_{H}(\mathcal Q_1\cap (h_1/2), \mathcal Q_2 \cap (h_2/2))= \delta$, by relabeling if necessary, we can arrange that $\sup_{1 \le i \le k} \|\xi_i - \eta_i\| \le \delta$. In building 
$\pi^2(s,t)$, we will then use the shortcut induced by $\eta_i$ whenever the shortcut induced by $\xi_i$ is used in $\pi^1(s,t)$. 

In order to formalize this strategy, we need to give a sense to the notion of the {\em direction} in which we traverse a shortcut. Let $[[u,v]]$ denote the geodesic from $u$ to $v$ in $\mathcal{T}_1$ or $\mathcal{T}_2$, where which tree we mean will be obvious from context.  Let $\oplus$ be the concatenation operator.  Suppose that $\pi^1(s,t)$ uses $\ell$ shortcuts, and assume without loss of generality that we have labeled the points of $\mathcal Q_1\cap (h_1/2)$ so that the shortcuts along the path $\pi^1(s,t)$ are induced by $\xi_i = (\xi_i^x, \xi_i^y)$ for $1 \le i \le \ell$.  In each such identification, one of the vertices is $\tau_1(\xi_i^x)$ and the other is $\hat{\tau}_1(\xi_i^x, \xi_i^y)$ (which is the vertex at distance $2 \xi_i^y$ from $\rho_1$ along the path $[[\rho_1,\tau_1(\xi_i^x)]]$).  For each $i\in\{1,\dots, \ell\}$, there exists (at least one) $\zeta_i\in [0,\sigma]$ such that $\tau_1(\zeta_i) = \hat{\tau}_1(\xi_i^x, \xi_i^y)$; pick one such arbitrarily.  

Now $\pi^1(s,\xi_i^x)$ is a sub-path of $\pi^1(s,t)$: it is precisely the portion of $\pi^1(s,t)$ connecting $\tau_1(s)$ to $\tau_1(\xi_i^x)$. Furthermore, the {\em pullback} of $\pi^1(s,\xi_i^x)$ to $\scr{T}_1$ will consist of 
$i$ geodesics plus a single isolated point, which will be either $\tau_1(\xi_i^x)$ or $\tau_1(\zeta_i)$. We say $\tau_1(\xi_i^x)$ is {\em followed  by} $\tau_1(\zeta_i)$ in $\pi^1(s,t)$ if 
$\tau_1(\zeta_i)$ is the isolated point, and otherwise say that $\tau_1(\zeta_i)$ is followed by $\tau_1(\xi_i^x)$ in $\pi^1(s,t)$.

If $\tau_1(\xi_i^x)$ is followed by $\tau_1(\zeta_i)$ in $\pi^1(s,t)$ then let $s_i = \xi_i^x$ and $t_i = \zeta_i$.  If, on the other hand, $\tau_1(\zeta_i)$ is followed by $\tau_1(\xi_i^x)$, let $s_i = \zeta_i$ and $t_i = \xi_i^x$.  Then
\[
\pi^1(s,t)= [[\tau_1(s),\tau_1(s_1)[[ \ \oplus \left(\bigoplus_{i=1}^{\ell-1} \ [[\tau_1(t_i), \tau_1(s_{i+1})[[ \right)  \oplus [[\tau_1(t_{\ell}),\tau_1(t)]].
\]

We now construct a path $\pi^2(s,t)$ between $\tau_2(s)$ and $\tau_2(t)$ in $G_2$. 
For each vertex identification in $G_2$, one of the vertices is $\tau_2(\eta_i^x)$; the other is $\hat{\tau}_2(\eta_i^x,\eta_i^y) = \tau_2(\gamma_i)$ for some $\gamma_i$ (where $\hat{\tau}_2(\eta_i^x, \eta_i^y)$ is the vertex at distance $2 \eta_i^y$ from the root on the path $[[\rho_2, \tau_2(\eta_i^x)]]$). If $s_i = \xi_i^x$ then let $\tilde{s}_i = \eta_i^x$ and $\tilde{t}_i = \gamma_i$; if $s_i = \zeta_i$ then let $\tilde{s}_i = \gamma_i$ and $\tilde{t}_i = \eta_i^x$.  Then the vertex identifications in $G_2$ are $(\tau_2(\tilde{s}_i), \tau_2(\tilde{t}_i))$.

Now set
\begin{align*}
\pi^2(s,t)  &= [[\tau_2(s),\tau_2(s_1)[[ \ \oplus \left(\bigoplus_{i=1}^{\ell-1} \ [[\tau_2(s_i), \tau_2(\tilde s_i)[[ \ \oplus \ [[\tau_2(\tilde t_i), \tau_2(t_i)[[ \ \oplus \ [[\tau_2(t_i), \tau_2(s_{i+1})[[ \right) \\
& \qquad \oplus \ [[\tau_2(s_{\ell}), \tau_2(\tilde s_{\ell})[[ \ \oplus \ [[\tau_2(\tilde t_{\ell}), \tau_2(t_{\ell})[[ \ \oplus \ [[\tau_2(t_{\ell}), \tau_2(t)]].
\end{align*}
The difference in length of $\pi^1(s,t)$ and $\pi^2(s,t)$ may be bounded as follows:
\begin{eqnarray}
|\pi^2(s,t)|-|\pi^1(s,t)| 
&\le&  |d_{1}(s,s_1) - d_{2}(s,s_1)| 
+ \sum_{i=1}^{\ell - 1} |d_{1}(t_i,s_{i+1}) - d_{2}(t_i,s_{i+1})|\nonumber \\
&&+ |d_{1}(t_{\ell},t) - d_{2}(t_{\ell},t)| 
 + \sum_{i=1}^{\ell} \left( d_{2}(s_i,\tilde s_i) + d_{2}(t_i,\tilde t_i) \right).
\label{eqn:length_bound}
\end{eqnarray}
We bound the first three terms on the right-hand side together.  Recalling the definition of the distances $d_1$ and $d_2$, we obtain that for any $r, r' \in [0,\sigma]$,
\[
|d_1(r,r')  - d_2(r,r') | \le 4 \|h_1 - h_2\|.
\]
Hence, the first three terms in (\ref{eqn:length_bound}) are together bounded above by $4 (\ell + 1) \|h_1 - h_2\|$.  We now turn to the last term in (\ref{eqn:length_bound}).  We have
\begin{equation} \label{eqn:a}
d_2(s_i, \tilde s_i) + d_2(t_i, \tilde t_i) = d_2(\xi_i^x,\eta_i^x) + d_2(\zeta_i,\gamma_i),
\end{equation}
for $1 \le i \le \ell$.  Since $\|\xi_i - \eta_i \| \le \delta$, we have $|\xi_i^x - \eta_i^x| \le \delta$ and so we get
\begin{equation} \label{eqn:b}
d_2(\xi_i^x, \eta_i^x) ~=~ h_2(\xi_i^x) + h_2(\eta_i^x) - 2 \inf_{\xi_i^x \wedge \eta_i^x \le r \le \xi_i^x \vee \eta_i^x} h_2(r) ~\le~ 2 \sup_{|r-r'| \le \delta} |h_2(r) - h_2(r')|.
\end{equation}
Now consider $d_2(\zeta_i,\gamma_i)$.  If $\hat{\tau}_2(\xi_i^x, \xi_i^y), \hat{\tau}_2(\eta_i^x, \eta_i^y) \in [[\tau_2(\xi_i^x), \tau_2(\eta_i^x)]]$ then we straightforwardly have
\begin{equation} \label{eqn:c}
d_2(\zeta_i,\gamma_i) \le d_2(\xi_i^x, \eta_i^x)  \le 2 \sup_{|r-r'| \le \delta} |h_2(r) - h_2(r')|.
\end{equation}
If, on the other hand, at least one of $\hat{\tau}_2(\xi_i^x, \xi_i^y), \hat{\tau}_2(\eta_i^x, \eta_i^y)$ is not in $[[\tau_2(\xi_i^x), \tau_2(\eta_i^x)]]$, then both must be on the path from the root $\rho_2$ to one of $\tau_2(\xi_i^x)$ or $\tau_2(\eta_i^x)$.  Then
\begin{equation} \label{eqn:d}
d_2(\zeta_i,\gamma_i) = |\xi_i^y - \eta_i^y| \le \delta.
\end{equation}
It follows from equations (\ref{eqn:a}) to (\ref{eqn:d}) that
\[
\sum_{i=1}^{\ell} \left( d_{2}(s_i,\tilde s_i) + d_{2}(t_i,\tilde t_i) \right) \le  \ell \left( \delta + 4 \sup_{|r-r'| \le \delta} |h_2(r) - h_2(r')| \right).
\]
Since $\ell\le k$, putting the different parts of (\ref{eqn:length_bound}) together, we get
\[
|\pi^2(s,t)|-|\pi^1(s,t)|\le 4 (k+1) \|h_1-h_2\| + k \pran{\delta + 4\sup_{|r-r'|\le \delta} |h_2(r)-h_2(r')|}.
\]
An identical argument provides a bound on $|\pi^1(s,t)|-|\pi^2(s,t)|$ and, since $\sup_{|r-r'|\le \delta}|h_1(r)-h_1(r')|\le 2 \|h_1-h_2\|+\sup_{|r-r'|\le \delta}|h_2(r)-h_2(r')|$, we obtain
$$
d_{GH}(G_1, G_2) \le \frac{1}{2}(k+1) \pran{\delta + 12 \|h_1 -h_2\| + 4 \sup_{|r-r'|\le \delta}|h_2(r)-h_2(r')|},
$$
which completes the proof.
\end{proof}

\subsection{Gromov--Hausdorff convergence of connected components}\label{sec:gh_connect}

We are now almost ready to prove the convergence of $G_m^p$, a connected component of $G(n,p)$ conditioned to have size $m$, to a continuum random graph.  For a suitable continuous embedding of $G_m^p$, we will prove the following theorem.

\begin{thm}\label{thm:gh_connectedx} Suppose that $\sigma>0$. Let $m=m(n)\in \Z^+$ be a sequence of integers such that $n^{-2/3}m \to \sigma$ as $n\to\infty$, and let $p=p(n)\in (0,1)$ be such that $pn\to 1$. Let $G_m^p$ be a connected component of $G(n,p)$ conditioned on having size $m$. Then, as $n\to\infty$,
\[
n^{-1/3}G_m^p \convdist \mathcal M^{(\sigma)},
\]
in the Gromov--Hausdorff distance.
\end{thm}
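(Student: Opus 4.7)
The plan is to chain together two comparisons. First use Proposition~\ref{prop:dist_conn_comp} and Lemma~\ref{lem:unif_comp} to identify $G_m^p$ in distribution with $G^X := G^X(\tilde T_m^p, \mathcal Q^p)$, and introduce the auxiliary graph $G^H := G^H(\tilde T_m^p, \mathcal Q^p)$. By the triangle inequality for the Gromov--Hausdorff distance,
\[
d_{GH}\!\left(n^{-1/3} G^X,\, \mathcal M^{(\sigma)}\right) \le d_{GH}\!\left(n^{-1/3} G^X,\, n^{-1/3} G^H\right) + d_{GH}\!\left(n^{-1/3} G^H,\, \mathcal M^{(\sigma)}\right),
\]
and I will show both terms vanish in probability.

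For the first term I apply Lemma~\ref{lem:gh_Xh_samepoints}. Its hypothesis $\mathcal Q^p \cap \tilde X^m = \mathcal Q^p \cap (\tilde H^m/2)$ does not hold deterministically; to secure it with probability tending to $1$, I combine Lemma~\ref{lem:dist_tilted_Xh} (which yields $\|\tilde X^m - \tilde H^m/2\| \le m^{3/8}$ with probability at least $1 - K m^{-1/16}$) with a Binomial counting argument: on this event, the symmetric difference of the two pointsets lies in a strip containing at most $m \cdot m^{3/8} = m^{11/8}$ integer lattice points, each in $\mathcal Q^p$ independently with probability $p \sim m^{-3/2}$, so the expected number of such points is $O(m^{-1/8})\to 0$. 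On the joint high-probability event, Lemma~\ref{lem:gh_Xh_samepoints} yields
\[
d_{GH}(G^X, G^H) \le |\mathcal Q^p \cap \tilde X^m| \left( \|\tilde X^m - \tilde H^m/2\| + 2 \right),
\]
where the surplus $|\mathcal Q^p \cap \tilde X^m|$ is tight by Theorem~\ref{thm:aldousthm}. Multiplying by $n^{-1/3}$ and using $m \sim \sigma n^{2/3}$, so $m^{3/8} = O(n^{1/4})$, this bound is of order $n^{-1/12}$, which tends to zero in probability.

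For the second term I invoke Lemma~\ref{lem:conv_XP_exc_x} and Skorohod's representation theorem, so as to work on a probability space where $(m/\sigma)^{-1/2}\tilde H^m(\lfloor (m/\sigma)\,\cdot\,\rfloor) \to 2\tilde e^{(\sigma)}$ uniformly and the rescaled pointset converges in Hausdorff distance to $\mathcal P \cap \tilde e^{(\sigma)}$, almost surely. The discrete metric space $n^{-1/3} G^H$ can be identified with $g(h_m, \mathcal Q_m)$ up to a deterministic error of order $n^{-1/3}\cdot |\mathcal Q^p \cap (\tilde H^m/2)|$ (arising because shortcut edges in $G^H$ have length $1$ while in $g(\cdot,\cdot)$ the corresponding endpoints are identified), where $h_m$ is a continuous interpolation of the rescaled height process and $\mathcal Q_m$ is the rescaled pointset. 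Applying Lemma~\ref{lem:bound_gh_real} with $h_1 = h_m$, $h_2 = 2\tilde e^{(\sigma)}$, $\mathcal Q_1 = \mathcal Q_m$ and $\mathcal Q_2 = \mathcal P$, each ingredient of the bound goes to zero a.s.: the number of gluings is almost surely finite, $\|h_m - 2\tilde e^{(\sigma)}\| \to 0$ and $\delta \to 0$ by construction, and uniform continuity of $\tilde e^{(\sigma)}$ on the compact $[0,\sigma]$ controls $\sup_{|r-r'|\le \delta}|h_2(r) - h_2(r')|$.

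The main obstacle is the first comparison. Lemma~\ref{lem:gh_Xh_samepoints} is sharp in requiring the two pointsets to agree exactly; the crux is that although $\tilde X^m$ and $\tilde H^m/2$ differ pointwise by as much as $m^{3/8}$, the intensity $p \sim m^{-3/2}$ of the Binomial pointset is small enough that, with high probability, no surplus-edge mark sits in the thin strip between the two curves. Once this delicate balance between the depth-first walk/height process discrepancy and the point density is secured, the remainder of the argument is a fairly direct assembly of Lemmas~\ref{lem:gh_Xh_samepoints},~\ref{lem:bound_gh_real} and~\ref{lem:conv_XP_exc_x} together with Skorohod's representation.
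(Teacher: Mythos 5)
Your overall architecture is exactly the paper's: identify $G_m^p$ with $G^X(\tilde T_m^p,\mathcal Q^p)$, pass through $G^H(\tilde T_m^p,\mathcal Q^p)$ by the triangle inequality, control $d_{GH}(G^X,G^H)$ via Lemma~\ref{lem:gh_Xh_samepoints} after forcing the two pointsets to coincide, and control $d_{GH}(n^{-1/3}G^H,\mathcal M^{(\sigma)})$ via Lemma~\ref{lem:bound_gh_real} under a Skorohod coupling. Your treatment of the first comparison is essentially the paper's: the "thin strip" count is precisely the stochastic domination of the symmetric difference $(\mathcal Q^p\cap\tilde X^m)\triangle(\mathcal Q^p\cap(\tilde H^m/2))$ by a $\mathrm{Binomial}(\lceil m\|\tilde X^m-\tilde H^m/2\|\rceil, p)$ variable. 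One small misattribution: the tightness of the surplus $|\mathcal Q^p\cap\tilde X^m|$ for a component \emph{conditioned to have size $m$} should be drawn from $\Ec{|\mathcal Q^p\cap\tilde X^m|}\to\E{|\mathcal P\cap\tilde e|}<\infty$ (Lemma~\ref{lem:conv_XP_exc_x} plus Markov), not from Theorem~\ref{thm:aldousthm}, which concerns the unconditioned graph.

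There is, however, a genuine gap in your second comparison. You assert that $n^{-1/3}G^H$ "can be identified with $g(h_m,\mathcal Q_m)$ up to a deterministic error of order $n^{-1/3}|\mathcal Q^p\cap(\tilde H^m/2)|$", where $h_m$ is the interpolated height process, the only discrepancy being edge insertion versus vertex identification. This is false: the real tree coded by the (interpolated) height process is \emph{not} isometric to the discrete tree $\tilde T_m^p$ with unit edge lengths. In the quotient construction, $d(i,j)=H(i)+H(j)-2\min_{i\le k\le j}H(k)$, but $\min_{i\le k\le j}H(k)$ is in general strictly larger than the height of the most recent common ancestor of $v_i$ and $v_j$ (take a root with children $v_1,v_3$ and a grandchild $v_2$ under $v_1$: the height process gives $d(2,3)=1$ while the tree distance is $3$). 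So the error you would need to absorb is not the edge-length error but a height-process/tree-metric discrepancy, which is only controlled asymptotically, not deterministically. This is exactly why the paper's Lemma~\ref{lem:sl_marked_height} works instead with the \emph{contour process} $\tilde C^m_r$ (which does code the metric of $\tilde T_m^p$ exactly) and transports the marks via the time change $K(i)=2i-\tilde H^m(i)$, using Theorem~\ref{mamothm} to see that the contour and height processes converge jointly to the same limit $2\tilde e^{(\sigma)}$ so that Lemma~\ref{lem:bound_gh_real} can be applied with $h_1=\tilde C^m_r$ and $h_2=2\tilde e^{(\sigma)}$, and bounding $d_H(\mathcal Q_m,\mathcal P_m\cap(m^{-1/2}\tilde H^m(\fl{m\,\cdot\,})/2))$ by $m^{-1}\sup_i|\tilde H^m(i)|$. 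Your argument needs either this contour-process device or a separate quantitative lemma showing that the height-process-coded tree is Gromov--Hausdorff close to $m^{-1/2}\tilde T_m^p$; as written, the step does not go through.
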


As mentioned before, the proof uses crucially the connection between $G^X(\tilde T_m^p, \mathcal Q^p)$, which has the same distribution as $G_m^p$, and $G^H(\tilde T_m^p, \mathcal Q^p)$, for which the limit object appears naturally.  

We now find it convenient to think of  $G^H(\tilde T_m^p, \mathcal Q^p)$ as a metric space with a continuous embedding.  This is done in the obvious way by viewing each edge of the graph as a line segment of length 1, with the natural distance.  Write $m^{-1/2} G^H(\tilde T_m^p, \mathcal Q^p)$ for this metric space with all edges rescaled by $m^{-1/2}$.  We will begin by proving the following lemma.

\begin{lem}\label{lem:sl_marked_height} Suppose that $\sigma>0$. Let $p=p(m)\in (0,1)$ be such that $m p^{2/3} \to \sigma$.  Then as $m \to \infty$,
\[
m^{-1/2}G^H(\tilde T_m^p, \mathcal Q^p) \convdist \mathcal M^{(\sigma)},
\]
in the Gromov--Hausdorff distance.
\end{lem}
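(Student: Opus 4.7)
The plan is to combine a Skorohod representation of the convergences in Lemma~\ref{lem:conv_XP_exc_x} with the deterministic bound of Lemma~\ref{lem:bound_gh_real}. Since $mp^{2/3}\to\sigma$, Lemma~\ref{lem:conv_XP_exc_x} together with Skorohod's representation theorem will let us work on a probability space on which, writing $h_m$ for the linear interpolation of $((m/\sigma)^{-1/2}\tilde H^m(\fl{(m/\sigma)s}), 0\le s\le\sigma)$ and $h(s)=2\tilde e^{(\sigma)}(s)$, both $\|h_m-h\|\to 0$ almost surely and, upon setting $\mathcal Q_m^*:=\mathcal P_m\cap(h_m/2)$ and $\mathcal Q^*:=\mathcal P\cap\tilde e^{(\sigma)}$, also $\delta_m:=d_H(\mathcal Q_m^*,\mathcal Q^*)\to 0$ almost surely. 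Since $\mathcal P$ is locally finite and almost surely puts no mass on the graph of $h$, we moreover have $k:=|\mathcal Q_m^*|=|\mathcal Q^*|$ for all large $m$, almost surely.

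Next I would apply Lemma~\ref{lem:bound_gh_real} with $h_1=h_m$, $h_2=h$, $\mathcal Q_1=\mathcal Q_m^*$, $\mathcal Q_2=\mathcal Q^*$ to obtain
\[
d_{GH}\bigl(g(h_m,\mathcal Q_m^*),\,\mathcal M^{(\sigma)}\bigr)\;\le\;\tfrac{1}{2}(k+1)\Bigl(\delta_m+12\,\|h_m-h\|+4\sup_{|r-r'|\le\delta_m}|h(r)-h(r')|\Bigr).
\]
Each of the three summands tends to $0$ almost surely: the first two by the Skorohod coupling, and the third because $\tilde e^{(\sigma)}$ has almost surely uniformly continuous sample paths and $\delta_m\to 0$. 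Hence $g(h_m,\mathcal Q_m^*)\to\mathcal M^{(\sigma)}$ in the Gromov--Hausdorff sense, almost surely.

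The remaining step is to compare the rescaled discrete graph $m^{-1/2}G^H(\tilde T_m^p,\mathcal Q^p)$ to the continuous glued space $g(h_m,\mathcal Q_m^*)$. The natural correspondence sends the $i$-th vertex of $\tilde T_m^p$ in $\oDFS$ order to $\tau_m(i/(m/\sigma))$, where $\tau_m$ is the canonical projection onto $\mathcal T(h_m)$; under this correspondence each shortcut edge of $G^H$ (of rescaled length $m^{-1/2}$) matches the corresponding identification in $g(h_m,\mathcal Q_m^*)$ (of length $0$), using that the discrete condition $2j\le\tilde H^m(i)$ for including a point becomes $2\xi^y\le h_m(\xi^x)$ after rescaling. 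Combined with the previous step via the triangle inequality for the GH-metric, this would give the desired convergence in distribution.

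The main obstacle is in this last step. The real tree $\mathcal T(h_m)$ encoded by the discrete height process is \emph{not} isometric to $\tilde T_m^p$ itself---for instance, sibling leaves visited consecutively in $\oDFS$ order become identified in $\mathcal T(h_m)$ even though they remain at graph distance $2$ in $\tilde T_m^p$---so one must carefully control the distortion of the above correspondence. This relies on the (classical) fact that when the rescaled height process of a tree converges uniformly to a continuous excursion, the rescaled tree (with unit edges) and the real tree encoded by the rescaled height process share the same Gromov--Hausdorff limit, so that their mutual GH-distance tends to $0$ in the Skorohod coupling. The finitely many shortcut identifications contribute only $O(m^{-1/2})$ per shortcut to the distortion and so do not affect the conclusion.
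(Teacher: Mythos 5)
Your proposal is correct and follows the same architecture as the paper's proof: Skorohod representation of the joint convergence in Lemma~\ref{lem:conv_XP_exc_x}, the deterministic distortion bound of Lemma~\ref{lem:bound_gh_real} to compare the two glued real trees, and a separate $O((k+1)m^{-1/2})$ comparison between the discrete graph and the glued real tree. The one place where you genuinely diverge is the step you yourself flag as the main obstacle. The paper sidesteps it entirely: rather than working with the real tree coded by the (interpolated) height process, it works with the real tree coded by the rescaled contour process $\tilde C^m_r$, which \emph{is} isometric to $m^{-1/2}\tilde T_m^p$, at the price of transporting the pointset via the time change $K(i)=2i-\tilde H^m(i)$ and then controlling $d_H(\mathcal Q_m,\mathcal P_m\cap(m^{-1/2}\tilde H^m(\fl{m\cdot})/2))\le m^{-1}\sup_i|\tilde H^m(i)|$. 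Your route — keep the height process and invoke the standard fact that the natural correspondence $v_i\leftrightarrow\tau_m(i/(m/\sigma))$ has distortion $O(m^{-1/2})$ (the discrepancy $H(v_i\wedge v_j)\in\{\min_{i\le k\le j}H(k)-1,\min_{i\le k\le j}H(k)\}$) — also works, but note that the fact you need is slightly stronger than ``the two trees have the same GH limit'': you must push the correspondence through the gluing, checking that the identified pairs in $g(h_m,\mathcal Q_m^*)$ correspond, up to $O(m^{-1/2})$, to the endpoints of the inserted edges in $m^{-1/2}G^H(\tilde T_m^p,\mathcal Q^p)$, and then rerun the path-splitting bookkeeping of Lemmas~\ref{lem:gh_Xh_samepoints} and~\ref{lem:bound_gh_real} to get a distortion bound of order $(k+1)m^{-1/2}$ for the glued spaces. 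That is exactly the content the paper packages into the contour-process device plus the $\frac12 m^{-1/2}k$ edge-contraction estimate, so nothing is missing in substance, only in detail. A small bonus of your version: by phrasing everything as almost sure convergence of each term in the bound (using a.s.\ uniform continuity of $2\tilde e^{(\sigma)}$ and a.s.\ finiteness and eventual constancy of $k$), you avoid the paper's quantitative detour through L\'evy's modulus of continuity and the Markov bound on $|\mathcal P\cap\tilde e|$.
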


\begin{proof} We again assume for simplicity that $\sigma=1$, the general case following by Brownian scaling. 
We wish to use Lemma~\ref{lem:bound_gh_real}.  However, somewhat inconveniently, $m^{-1/2}G^H(\tilde T_m^p, \mathcal Q^p)$ is not quite the same as the real graph $g(\tilde H^m_c, \mathcal Q^p)$, where $\tilde H^m_c$ is the continuous interpolation of $(m^{-1/2}H^m(mt): t = 0,m^{-1}, 2m^{-1}, \ldots, 1)$.  The reasons for this are twofold.  Firstly, the real tree which is the natural continuous embedding of $m^{-1/2} \tilde T_m^p$ is \emph{not} the real tree coded by $H^m_c$, but is rather the real tree coded by $(m^{-1/2} \tilde{C}^m(2mt), 0 \le t \le 1)$, where $\tilde{C}^m$ is the contour process of the tree $T_m^p$.  Secondly, in $g(\,\cdot\,,\,\cdot\,)$ the points of the pointset induce vertex identifications, rather than the insertion of an edge of length $m^{-1/2}$ between the vertices concerned.  Neither of these facts will pose a problem, but they make the line of reasoning somewhat less clear.

We will first deal with approximating $m^{-1/2}G^H(\tilde T_m^p, \mathcal Q^p)$ by $g(h, \mathcal Q)$ for a suitable function $h$ and a suitable pointset $\mathcal Q$.  Write $\tilde{C}^m_r$ for the rescaled version of $\tilde{C}^m$, i.e.,
\[
\tilde{C}^m_r(t) := m^{-1/2} \tilde{C}^m(2mt), \quad 0 \le t \le 1.
\]
Then this function encodes the correct real tree.  However, if we use the points $\mathcal{P}_m = \{(m^{-1} i, m^{-1/2}j): (i,j) \in \mathcal Q^p\}$, we identify the \emph{wrong} vertices.  In order to get the correct vertices, we define 
\[
K(i) = 2i - \tilde{H}^m(i), \quad 0 \le i \le m-1.
\]
Then we have that the contour process travels from the $(i+1)$-st vertex visited in depth-first order to the $(i+2)$-nd in the interval $[K(i), K(i+1)]$ (see Section 2.4 of \citet{duquesnelegall02randomtrees} for a proof).  In particular, $\tilde C^m(K(i)) = \tilde H^m(i)$.  Now let 
$$\mathcal Q_m = \{((2m)^{-1} K(i), m^{-1/2} j): (i,j) \in \mathcal Q^p, 0 < j \le \tilde H^m(i) \}.$$  Then $\mathcal Q_m$ gives the right set of vertex identifications to make.  So we will approximate $m^{-1/2}G^H(\tilde T_m^p, \mathcal Q^p)$ by $g(\tilde{C}^m_r, \mathcal Q_m)$.

We now need to deal with the fact that, although $g(\tilde{C}^m_r, \mathcal Q_m)$ has the correct underlying tree and identifies the correct vertices, we actually insert edges (of length $m^{-1/2}$) in $m^{-1/2}G^H(\tilde T_m^p, \mathcal Q^p)$.  Suppose there are $k$ vertex identifications.  Take the correspondence between the vertices of the metric spaces $g(\tilde{C}^m_r, \mathcal Q_m)$ and $m^{-1/2}G^H(\tilde T_m^p, \mathcal Q^p)$ given by the fact that they have the same underlying (real) tree.  Now, $m^{-1/2}G^H(\tilde T_m^p, \mathcal Q^p)$ has line segments inserted between vertices which are identified in $g(\tilde{C}^m_r, \mathcal Q_m)$.  In the correspondence, map all of the elements of one of these segments in $m^{-1/2}G^H(\tilde T_m^p, \mathcal Q^p)$ to the single (identified) vertex in $g(\tilde{C}^m_r, \mathcal Q_m)$.  Then the distortion of this correspondence is at most $k m^{-1/2}$, since the distance between any two vertices in $m^{-1/2}G^H(\tilde T_m^p, \mathcal Q^p)$ differs by at most $k m^{-1/2}$ from the distance between the corresponding vertices in $g(\tilde{C}^m_r, \mathcal Q_m)$.  It follows that
\begin{equation} \label{eqn:edges_vanish}
d_{GH}(m^{-1/2}G^H(\tilde T_m^p, \mathcal Q^p),g(\tilde{C}^m_r, \mathcal Q_m)) ~\le~ \frac{1}{2} m^{-1/2}|\mathcal P_m \cap (m^{-1/2} \tilde{H}^m(\fl{m \,\cdot\,}/2)|.
\end{equation}

Recall that $\mathcal M = g(2 \tilde{e}, \mathcal P)$.  We next bound $d_{GH}(g(\tilde{C}^m_r, \mathcal Q_m), g(2 \tilde{e}, \mathcal P))$.
In order to do this using Lemma~\ref{lem:bound_gh_real}, we need an upper bound on the Hausdorff distance between $\mathcal Q_m$ and $\mathcal P \cap \tilde{e}$.  We have
\begin{equation} \label{eqn:Hausdorffpointsets}
d_H(\mathcal Q_m, \mathcal P \cap \tilde{e}) ~\le~ d_H(\mathcal Q_m, \mathcal P_m \cap (m^{-1/2} \tilde{H}^m(\fl{m \,\cdot\,})/2)) + d_H(\mathcal P_m \cap (m^{-1/2} \tilde{H}^m(\fl{m \,\cdot\,})/2), \mathcal P \cap \tilde{e}).
\end{equation}
There is a one-to-one correspondence between the vertices of $\mathcal Q_m$ and those of $\mathcal P_m \cap (m^{-1/2} \tilde{H}^m(\fl{m \,\cdot\,})/2)$; indeed, by definition, each point $(m^{-1}i, m^{-1/2}j) \in \mathcal P_m \cap (m^{-1/2} \tilde{H}^m(\fl{m \,\cdot\,})/2)$ corresponds to a point $((2m)^{-1} K(i), m^{-1/2}j) \in \mathcal Q_m$.  But then
\begin{equation} \label{eqn:pointsmatchup}
d_H(\mathcal Q_m, \mathcal P_m \cap (m^{-1/2} \tilde{H}^m(\fl{m \,\cdot\,})/2)) 
\le \frac{1}{m} \sup_{0 \le i < m} \left| i - \frac{K(i)}{2} \right| =  \frac{1}{m} \sup_{0 \le i < m} | \tilde{H}^m(i) |.
\end{equation}

By Lemma~\ref{lem:conv_XP_exc_x}, Theorem \ref{mamothm} and Skorohod's representation theorem, there is a probability space where 
\[
((m^{-1/2} \tilde H^m(\fl{mt}), 0\le t\le 1), (\tilde C^m_r(t), 0 \le t \le 1), \mathcal P_m \cap (m^{-1/2} \tilde H^m(\fl{mt})/2, 0\le t\le 1)) 
\to (2\tilde e, 2\tilde e, \mathcal P \cap \tilde e)
\]
almost surely.  Let $\epsilon>0$. Then, there exists an almost surely finite random variable $Y$ such that for all $m\ge Y$, the following hold: 
\begin{itemize}
\item $|\mathcal P_m \cap (m^{-1/2} \tilde{H}^m(\fl{m \,\cdot\,})/2)|=|\mathcal P \cap \tilde e|$,
\item $d_{H}(\mathcal P_m \cap (m^{-1/2} \tilde{H}^m(\fl{m \,\cdot\,})/2)), \mathcal P \cap \tilde e) \le \epsilon$, 
\item $\sup_{0\le t \le 1}|\tilde C^m_r(t)- 2\tilde e(t)|\le \epsilon$, and
\item $\sup_{0 \le i < m} | \tilde{H}^m(i) | \le m\epsilon$.
\end{itemize}
Note that $\mathcal Q_m = \mathcal Q_m \cap (\tilde C^m_r/2)$ since the points contained in $\mathcal Q_m$ are already below  $(\tilde C^m_r/2)$.  Then, by Lemma~\ref{lem:bound_gh_real}, (\ref{eqn:Hausdorffpointsets}) and (\ref{eqn:pointsmatchup}), for all $m\ge Y$,
\[
d_{GH}(g(\tilde{C}^m_r, \mathcal Q_m), g(2 \tilde e, \mathcal P))
\le \frac{1}{2} (|\mathcal P \cap \tilde e| + 1) \left( 14 \epsilon + 8 \sup_{|s-t|\le 2 \epsilon}|\tilde e(s)-\tilde e(t)| \right).
\]
(We remark that Lemma \ref{lem:bound_gh_real} is applied with $\delta=2\epsilon$, not $\delta=\epsilon$. This is essentially because of the factor 2 rescaling of the point sets that occurs prior to vertex identification.)
By (\ref{eqn:edges_vanish}), we then get that
\begin{equation} \label{eqn:almostthere}
d_{GH}(m^{-1/2}G^H(\tilde T_m^p, \mathcal Q^p),  g(2 \tilde e, \mathcal P)) \le \frac{1}{2} ((1 + m^{-1/2})|\mathcal P \cap \tilde e| + 1) \left( 14 \epsilon + 8 \sup_{|s-t|\le 2 \epsilon}|\tilde e(s)-\tilde e(t)| \right)
\end{equation}
for all $m \ge Y$.

Now note that, by definition, the excursion measure associated with a tilted excursion $\tilde e$ is absolutely continuous with respect to It\^o's excursion measure. It follows by Levy's Modulus of Continuity Theorem (see, for example, \cite{RoWi1994,ReYo2004}) that a tilted excursion $\tilde e$ satisfies
\begin{equation}\label{eq:gh_as_prod}
\p{\sup_{|s-t|\le 2 \epsilon}|\tilde e(s)-\tilde e(t)|\ge \epsilon^{1/4}}  \to 0
\end{equation}
as $\epsilon \to 0$.
Furthermore, given $\tilde e$, $|\mathcal P \cap \tilde e|$ has a Poisson distribution with mean $\int_0^1 \tilde e(s)ds$, and so
\begin{align*}
\pc{|\mathcal P \cap \tilde e| \ge \epsilon^{-1/8}} 
&\le \probC{|\mathcal P \cap \tilde e|\ge \epsilon^{-1/8}}{\int_0^1 \tilde e(s)ds \le \epsilon^{-1/16}} + \p{\int_0^1 \tilde e(s)ds \ge \epsilon^{-1/16}}\\
&\le \epsilon^{1/16} + \E{\int_0^1 \tilde e(t)dt} \epsilon^{1/16},
\end{align*}
by Markov's inequality. Since $\int_0^1 \tilde e(t)dt$ has finite expectation and $Y<\infty$ almost surely, we obtain from (\ref{eqn:almostthere}) and (\ref{eq:gh_as_prod}) that 
\begin{equation} \label{eqn:firstbit}
d_{GH}(m^{-1/2}G^H(\tilde T_m^p, \mathcal Q^p),  g(2 \tilde e, \mathcal P)) \convprob 0.
\end{equation}
The result follows.
\end{proof}

\begin{proof}[Proof of Theorem~\ref{thm:gh_connectedx}]  Once again, we treat the case $\sigma=1$, which implies the general result. By Lemma~\ref{lem:unif_comp}, $G_m^p$ is distributed as $G^X(\tilde T_m^p, \mathcal Q^p)$. Recall from Lemma \ref{lem:conv_XP_exc_x} that $\tilde X^m$ and $\tilde H^m$ denote the depth-first walk and height process of $\tilde T_m^p$, respectively.  In order to apply the result of Lemma~\ref{lem:sl_marked_height}, it will be convenient to be working with almost sure convergence instead of weak convergence.  By Lemmas~\ref{lem:dist_tilted_Xh} and \ref{lem:conv_XP_exc_x} and Skorohod's representation theorem, there is a probability space in which $m^{-1/2}\|\tilde X^m-\tilde H^m/2\| \to 0$ almost surely and
\begin{align*}
& \Bigg(
\pran{\frac{\tilde H^m(\fl{mt})}{\sqrt{m}}, 0\le t\le 1}, \pran{\frac{\tilde C^m(2mt)}{\sqrt{m}}, 0 \le t \le 1}, \pran{\frac{\tilde{X}^m(\fl{mt})}{\sqrt{m}}, 0\le t \le 1}, \\
& \hspace{7.3cm} \mathcal P_m \cap \pran{\frac{\tilde H^m(\fl{mt})}{2\sqrt m}, 0\le t\le 1} \Bigg)
\to (2\tilde e, 2\tilde e, \tilde e, \mathcal P \cap \tilde e),
\end{align*}
almost surely.  Recall that $\mathcal P_m = \{(m^{-1}i, m^{-1/2}j): (i,j) \in \mathcal Q^p\}$.  Then the size of the symmetric difference
\[
(\mathcal Q^p \cap \tilde{X}^m) \triangle (\mathcal Q^p \cap (\tilde{H}^m/2))
\]
is stochastically dominated by a Binomial random variable with parameters $\ce{m \|\tilde{X}^m - \tilde{H}^m/2\|}$ and $m^{-3/2}$.  Since $m^{-1/2}\|\tilde X^m-\tilde H^m/2\| \to 0$,
\begin{equation} \label{eqn:symmdiff}
|(\mathcal Q^p \cap \tilde{X}^m) \triangle (\mathcal Q^p \cap (\tilde{H}^m/2))| \convprob 0.
\end{equation}
Since this random variable is integer-valued, it follows that $\Probc{\mathcal Q^p \cap \tilde{X}^m = \mathcal Q^p \cap (\tilde{H}^m/2)} \to 1$ as $m \to \infty$.
Then, by Lemma~\ref{lem:gh_Xh_samepoints}, on the event $\{\mathcal Q^p \cap \tilde{X}^m = \mathcal Q^p \cap (\tilde{H}^m/2)\}$ we have
\[
d_{GH}(G^X(\tilde T_m^p, \mathcal Q^p), G^H(\tilde T_m^p, \mathcal Q^p))
\le |\mathcal Q^p \cap \tilde{X}^m| \cdot (\| \tilde{X}^m - \tilde{H}^m/2 \| + 2).
\]
So, for any $\epsilon>0$,
\begin{align*}
& \pc{d_{GH}(G^X(\tilde T_m^p, \mathcal Q^p), G^H(\tilde T_m^p,\mathcal Q^p))\ge \epsilon m^{1/2}} \\
&\quad \le~\pc{\|\tilde X^m-\tilde H^m/2\|\ge \epsilon m^{3/8}-2} + \pc{|\mathcal Q^p\cap \tilde X^m|\ge m^{1/8}} 
+ \Probc{\mathcal Q^p \cap \tilde{X}^m \neq \mathcal Q^p \cap (\tilde{H}^m/2)}\\
& \quad \le~(1+o(1))K \epsilon^{-1/6} m^{-1/16} + m^{-1/8} \Ec{|\mathcal Q^p \cap \tilde X^m|} 
+ \Probc{\mathcal Q^p \cap \tilde{X}^m \neq \mathcal Q^p \cap (\tilde{H}^m/2)},
\end{align*}
by Lemma~\ref{lem:dist_tilted_Xh} and Markov's inequality. Since $\Ec{|\mathcal Q^p \cap \tilde X^m|}\to \E{|\mathcal P\cap \tilde e|}=\Ec{\int_0^1 \tilde e(s) ds}<\infty$, the right-hand side tends to zero.  Finally,
\[
d_{GH}(m^{-1/2} G^X(\tilde{T}_m^p, \mathcal Q^p), \mathcal M)
~\le~ m^{-1/2} d_{GH}(G^X(\tilde{T}_m^p, \mathcal Q^p), G^H(\tilde{T}_m^p, \mathcal Q^p))
+ d_{GH}(m^{-1/2} G^H(\tilde{T}_m^p, \mathcal Q^p), \mathcal M)
\]
and so from (\ref{eqn:firstbit}) we obtain that $d_{GH}(m^{-1/2} G^X(\tilde{T}_m^p, \mathcal Q^p), \mathcal M) \convprob 0$. Since $G^p_m$ is distributed as $G^X(\tilde{T}_m^p, \mathcal Q^p)$ 
and $m^{-1/2}n^{1/3} \to 1$, the result follows. 
\end{proof}

We finish this section by stating an easy corollary on the number of surplus edges of $G_m^p$.
 
 \begin{cor}
 Suppose that $m = m(n)$ is such that $n^{-2/3}m \to \sigma$ as $n \to \infty$ and let $p=p(n)$ be such that $pn \to 1$.  Then as $n \to \infty$,
 \[
 s(G_m^p) \convdist \mathrm{Poisson}\left( \int_0^{\sigma} \tilde e^{(\sigma)}(u) du \right).
 \]
 \end{cor}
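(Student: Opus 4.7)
The plan is to use Proposition \ref{prop:dist_conn_comp} to express $s(G_m^p)$ as a conditional Binomial, leverage Theorem \ref{thm:conv_marked_tildedX} to identify the scaling limit of its conditional mean, and then pass from Binomial to mixed Poisson via Laplace transforms.

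First, Proposition \ref{prop:dist_conn_comp} gives $G_m^p \equidist \tilde G_m^p$, so conditional on $\tilde T_m^p$ the surplus $s(G_m^p)$ is $\mathrm{Binomial}(a(\tilde T_m^p), p)$. I would then establish that
\[
p \cdot a(\tilde T_m^p) \convdist A := \int_0^{\sigma} \tilde e^{(\sigma)}(u) du.
\]
Indeed, after the change of variables $u = (m/\sigma) t$,
\[
\int_0^{\sigma} (m/\sigma)^{-1/2} \tilde X^m(\fl{(m/\sigma) t}) dt = \sigma^{3/2} m^{-3/2} a(\tilde T_m^p).
\]
By Theorem \ref{thm:conv_marked_tildedX}, the integrand on the left converges in distribution in $\mathbb{D}([0,\sigma], \R^+)$ to $\tilde e^{(\sigma)}$, which is almost surely continuous. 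Integration against Lebesgue measure is continuous at continuous paths (Skorohod convergence to a continuous limit forces uniform convergence), so the continuous mapping theorem yields $\sigma^{3/2} m^{-3/2} a(\tilde T_m^p) \convdist A$. Since $pn \to 1$ and $n^{-2/3} m \to \sigma$ give $(m/\sigma)^{3/2} p \to 1$, Slutsky's theorem delivers $p \cdot a(\tilde T_m^p) \convdist A$.

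For the final step, I compute the Laplace transform of $s(G_m^p)$. For $v \ge 0$, the tower property gives
\[
\E{e^{-v s(G_m^p)}} = \E{(1 - p(1 - e^{-v}))^{a(\tilde T_m^p)}} = \E{\exp\bigl(a(\tilde T_m^p) \log(1 - p(1-e^{-v}))\bigr)}.
\]
Using $\log(1-x) = -x + O(x^2)$ together with $p \to 0$ and the tightness of $p \cdot a(\tilde T_m^p)$, the exponent converges in distribution to $-A(1-e^{-v})$. Since the integrand lies in $[0,1]$, bounded convergence yields
\[
\E{e^{-v s(G_m^p)}} \to \E{\exp(-A(1-e^{-v}))},
\]
which is precisely the Laplace transform of a mixed Poisson with random mean $A$. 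As Laplace transforms determine distributions on $\Z^+$, the claim follows.

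The only delicate point is the weak-convergence step above, which is essentially already carried out inside the proof of Theorem \ref{thm:conv_marked_tildedX}; given that, the remainder is a routine mixed Binomial-to-Poisson argument.
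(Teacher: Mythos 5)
Your proof is correct, but it takes a genuinely different route from the paper's. The paper reads off $s(G_m^p)$ as the cardinality of the pointset $\mathcal Q^p\cap \tilde X^m$, replaces it by $\mathcal Q^p\cap(\tilde H^m/2)$ using Lemma~\ref{lem:dist_tilted_Xh} and the vanishing of the symmetric difference, invokes the point-process convergence of Lemma~\ref{lem:conv_XP_exc_x} to get $\mathcal P\cap\tilde e^{(\sigma)}$ in the limit, and then quotes Proposition~\ref{lem:factsaboutlimit} for the mixed-Poisson count. You instead stay entirely at the level of the area: conditional on $\tilde T_m^p$ the surplus is $\mathrm{Binomial}(a(\tilde T_m^p),p)$, the rescaled area converges by Theorem~\ref{thm:conv_marked_tildedX} and the continuous mapping theorem (the integral functional is continuous at continuous paths, and $\tilde e^{(\sigma)}$ is a.s.\ continuous), and the classical law of small numbers with random parameter, carried out via Laplace transforms with a bounded integrand, finishes the job. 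Your normalizations check out: $(m/\sigma)^{3/2}p\to 1$ follows from $pn\to 1$ and $n^{-2/3}m\to\sigma$, and the error term $p^2 a(\tilde T_m^p)=p\cdot(p\,a(\tilde T_m^p))\convprob 0$ by tightness. The trade-off: your argument is more elementary and self-contained, needing none of the point-process machinery or the comparison between $\tilde X^m$ and $\tilde H^m$; the paper's version is essentially free given the apparatus already built for Theorem~\ref{thm:gh_connectedx}, and it yields the surplus count jointly with the locations of the identifications (hence with the limiting metric space), which a marginal Laplace-transform computation does not.
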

 
 \begin{proof}
 This follows immediately from the observation that
 \[
 s(G_m^p) \equidist |\mathcal Q^p \cap \tilde{X}^m| = |\mathcal P_m \cap ((m/\sigma)^{-1/2} \tilde X^m(\fl{(m/\sigma) t})/2, 0\le t\le \sigma)|,
 \]
 and from Lemma~\ref{lem:dist_tilted_Xh}, (\ref{eqn:symmdiff}) and Proposition~\ref{lem:factsaboutlimit}.
 \end{proof}


\section{The limit of the critical random graph}
\label{sec:random_graph_limit}

Recall that we are interested in $G(n,p)$ with $p= n^{-1} + \lambda n^{-4/3}$, for $\lambda \in \R$.  
We will begin by recalling some more details of Aldous' limit result from \cite{aldous97brownian}.  His principal tool is the so-called \emph{breadth-first walk} on $G(n,p)$.  This is very similar to our depth-first walk, except that the vertices are considered in a different order.  The \emph{breadth-first ordering} $v_0, v_1, \ldots, v_{n-1}$ on the vertices of the graph is obtained as follows.  (We deliberately use the same notation as in our definition of the depth-first ordering.)  For $i\ge 0$, we define the ordered set $\so_i$ of open vertices at time $i$, and the set $\mathcal A_i$ of the vertices that have already been explored at time $i$. We say that a vertex $u$ has been \emph{seen} at time $i$ if $u\in \so_i \cup \mathcal A_i$.  Let $c_i$ be a counter which keeps track of how many components we have looked at so far.
\begin{steplist}
\item[{\sc Initialization}] Set $\so_0=(1)$, $\mathcal A_0=\emptyset$, $c_0 = 1$. 
\item[{\sc Step $i$}\hspace{38pt}] ($0 \leq i \leq n-1$): Let $v_{i}$ be the first vertex of $\so_i$ and let $\mathcal N_i$ be the set of neighbors of $v_i$ in $[n]\setminus (\mathcal A_i\cup \so_i)$. Set $\mathcal A_{i+1} = \mathcal A_i\cup \{v_i\}$.  Construct $\so_{i+1}$ from $\so_i$ by removing $v_i$ from the start of $\so_i$ and affixing the elements of $\mathcal N_i$ in increasing order to the {\em end} of $\so_i\setminus\{v_i\}$.  If now $\so_{i+1} = \emptyset$, add to it the lowest-labeled element of $[n] \setminus \mathcal A_{i+1}$ and set $c_{i+1} = c_i + 1$.  Otherwise, set $c_{i+1} = c_i$.  
\end{steplist}
The only difference between this procedure and the one introduced in Section \ref{dfs} is that the word ``start'' has been changed to ``end'' (italicized above). 
Now define $Y_n(i) = |\so_i \setminus \{v_i\}| - (c_i - 1)$.  Then $(Y_n(i), 0 \leq i < n)$ is called the breadth-first walk on the graph.  It is straightforward to see that $(Y_n(i), 0 \leq i < n)$ attains a new minimum every time that $v_i$ is the root of a new component.  This enables us to interpret component sizes as excursions above past minima of the breadth-first walk.  Aldous proved that
\begin{equation} \label{aldous}
n^{-1/3} (Y_n(\fl{n^{2/3}t}), t \geq 0) \convdist (W^{\lambda}(t), t \geq 0)
\end{equation}
as $n \to \infty$ in $\mathbb{D}(\R^+, \R^+)$, with convergence (as is usual) uniform on compact time-intervals.  Here,
\[
W^{\lambda}(t) = W(t) + \lambda t - \frac {t^2}2,
\]
where $W$ is a standard Brownian motion.  It is not hard to see that the breadth-first and depth-first walks are interchangeable here, and that the identical result holds for the depth-first walk.  Since we do not actually need this result, we will not go further into details here.

Now define 
\[
B^{\lambda}(t) = W^{\lambda}(t) - \min_{0 \leq s \leq t} W^{\lambda}(s),
\]
the reflecting process. The excursions of this process correspond to ``components" of the limiting graph.    As stated by Aldous, there is an inhomogeneous excursion measure associated with this $B^{\lambda}$, in the same way as It\^o's excursion measure is associated to a reflecting Brownian motion.  Recall from (\ref{excursionspace}) that $\mathcal{E}$ is the space of continuous excursions of finite length.

Denote It\^o's measure by $\N$ and the excursion measure associated to $B^{\lambda}$ by $\N^{\lambda}_t$ (i.e.\ for excursions starting at time $t$).  Then, as argued by Aldous in the proof of his Lemma 26, we can calculate the density of $\N_t^{\lambda}$ with respect to $\N$.  For clarity, we will repeat his argument here.  Firstly note that $\N_t^{\lambda} = \N_0^{\lambda - t}$ and so it suffices to find $\N_0^{\lambda}$ for all $\lambda \in \R$.

Write $W = (W(t), 0 \leq t \leq \sigma)$ for the canonical process under $\N$.  Then by the Cameron--Martin--Girsanov formula \cite{RoWi2000, ReYo2004}, applied under $\N$,
\[
\frac{d \N_0^{\lambda}}{d \N} = \exp \left( \int_0^{\sigma} (\lambda - s) d W(s) - \frac{1}{2} \int_0^{\sigma} (\lambda - s)^2 ds \right).
\]
By integration by parts, we have
\[
\int_0^{\sigma} (\lambda - s) dW(s) = \int_0^{\sigma} W(s) ds,
\]
the area under the excursion $W$.  So we can re-write
\[
\frac{d \N_0^{\lambda}}{d \N} = \exp \left( \int_0^{\sigma} W(s) ds - \frac{1}{6} ((\sigma - \lambda)^3 + \lambda^3)) \right).
\]
We know that we can define a normalized excursion measure $\N(\,\cdot\,| \sigma = x)$ for each $x > 0$, which is, in fact, a probability measure.  There is a corresponding probability measure $\N^{\lambda}_0(\,\cdot\, | \sigma = x)$ which, for $\mathcal{B} \subset \mathcal{E}$ a Borel set, is determined by
\[
\N^{\lambda}_0 [\mathbbm{1}_{\mathcal{B}} | \sigma = x]
= \frac{\N \left[ \exp \left( \int_0^x W(s) ds \right) \mathbbm{1}_{\mathcal{B}} \big| \sigma = x \right]}
           {\N \left[  \exp \left( \int_0^x W(s) ds \right)  \big| \sigma = x \right]}.
\]
Note that this quantity is independent of $\lambda$. By Brownian scaling,
\[
\N \left[  \exp \left( \int_0^x W(s) ds \right)  \bigg| \sigma = x \right]
= \N \left[ \exp \left( x^{3/2} \int_0^1 W(s) ds \right) \bigg| \sigma = 1 \right]
= \E{\exp\left(x^{3/2} \int_0^1 e(s) ds\right)}
\]
where $(e(s), 0 \leq s \leq 1)$ is a standard Brownian excursion under $\mathbb{E}$.  Similarly, for any suitable test function $f$ of the excursion,
\[
\N_0^{\lambda}[f(W(s), 0 \leq s \leq x) | \sigma = x] = \frac{\E{f(\sqrt{x} e(s/x), 0 \leq s \leq 1)  \exp\left(x^{3/2} \int_0^1 e(s) ds\right)} } {\E{\exp\left(x^{3/2} \int_0^1 e(s) ds\right)}  }.
\]

Putting all of this together, we see that the inhomogenity of the excursion measure lies entirely in the selection of the length of the excursion.  So to give a complete description, we just need to determine
$\N_0^{\lambda}(\sigma \in dx)$.  We know that $\N(\sigma \in dx) = (2 \pi)^{-1/2} x^{-3/2}
 dx$ and so
\[
\N_0^{\lambda}(\sigma \in dx) = (2 \pi)^{-1/2} x^{-3/2} \exp\left(-\frac{1}{6}((x - \lambda)^3 + \lambda^3)\right) \N \left[\exp \left( \int_0^x W(s) ds \right) \bigg| \sigma = x \right]dx.
\]
To recapitulate: the excursion measure at time $t$ picks an excursion length according to $\N_0^{\lambda-t}(\sigma \in dx)$.  Then, given $\sigma = x$, it picks a tilted Brownian excursion of that length. This is the crucial fact that allows us to use Theorem~\ref{thm:gh_connectedx} and the results of Section~\ref{sec:connected_components} about the limit of connected components. It is not surprising that it holds, however, since the components of $G(n,p)$ likewise have the property that one can first sample the size and then, given the size, sample a connected component of that size. 

Let $\mathbf{\mathcal{C}}^{n} = (\mathcal{C}_1^{n}, \mathcal{C}_2^{n}, \ldots)$ be the components of the random graph $G(n,p)$ with $p=1/n + \lambda n^{-4/3}$, in decreasing order of their sizes, $Z_1^{n}\ge Z_2^{n}\ge \ldots$ respectively. Let $\mathbf{Z}^{n} = (Z_1^{n},Z_2^{n}, \ldots)$. As a consequence of (\ref{aldous}),  \citet{aldous97brownian} proves that 
\begin{equation}\label{eq:aldous_sizes}
n^{-2/3}\mathbf{Z}^{n} \convdist \mathbf{Z},
\end{equation}
where $\mathbf{Z}$ is the ordered sequence of excursion lengths of $B^{\lambda}$ and convergence is in $\ell_{\searrow}^2$.
Let $\mathbf{M}^{n} = (M_1^{n}, M_2^{n}, \ldots)$ be the sequence of metric spaces corresponding to these components. Recall the definition of $\mathcal M^{(\sigma)}$ from the start of Section \ref{subsec:limob}.  We next state a more precise version of Theorem \ref{thm:main}. 

\begin{thm}\label{thm:4metric}
As $n \to \infty$,
\[
(n^{-2/3} \mathbf{Z}^{n}, n^{-1/3} \mathbf{M}^{n}) \convdist (\mathbf{Z}, \mathbf{M}),
\]
where $\mathbf{M} = (M_1, M_2, \ldots)$ is a sequence of metric spaces such that, conditional on $\mathbf{Z}$, $M_1, M_2, \ldots$ are independent and $M_i \equidist \mathcal{M}^{(Z_i)}$.  Convergence in the second co-ordinate here is in the metric specified by (\ref{4metric}). 
\end{thm}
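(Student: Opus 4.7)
The strategy is to combine Aldous's size convergence (\ref{eq:aldous_sizes}), the conditional-independence structure of $G(n,p)$, and the componentwise scaling limit of Theorem~\ref{thm:gh_connectedx}, and then extend from finitely many components to the full sequence via a fourth-moment tail bound on diameters.

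First, applying Skorokhod's representation to (\ref{eq:aldous_sizes}), I may assume $n^{-2/3}\mathbf{Z}^n\to\mathbf{Z}$ almost surely in $\ell^2_{\searrow}$. Conditional on the partition of $[n]$ into connected components, the components are independent, and each component of size $m$ is distributed as $G_m^p$ in the sense of Proposition~\ref{prop:dist_conn_comp}; in particular, viewing each $M_i^n$ as an unlabeled rooted metric space, the $M_i^n$ are conditionally independent given $\mathbf{Z}^n$. Invoking Theorem~\ref{thm:gh_connectedx} separately on each of the first $k$ components yields, for every fixed $k$,
\[
\bigl(n^{-2/3}\mathbf{Z}^n,\, n^{-1/3}M_1^n,\,\ldots,\, n^{-1/3}M_k^n\bigr) \convdist \bigl(\mathbf{Z},\,\mathcal{M}^{(Z_1)},\,\ldots,\,\mathcal{M}^{(Z_k)}\bigr),
\]
with the $\mathcal{M}^{(Z_i)}$ conditionally independent given $\mathbf{Z}$.

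To promote this to convergence in the metric (\ref{4metric}) on the full sequences, it suffices to show that $\sum_{i>k}d_{GH}(n^{-1/3}M_i^n,\{\rho\})^4$ is negligible in probability as $k\to\infty$, uniformly in $n$, together with the analogous statement for $\mathbf{M}$. Since $d_{GH}(A,\{\rho\})\le\mathrm{diam}(A)$, and the diameter of a connected graph is at most twice the maximum $\|\tilde{X}^m\|$ of its depth-first walk, the task reduces to a uniform tail bound on $\|\tilde{X}^m\|/\sqrt{m}$. Such a bound follows from Lemma~\ref{lem:heightbound} for uniform trees combined with the uniform control of the tilt in Lemma~\ref{lem:conv_moments_area} via Cauchy--Schwarz: this yields $\mathbb{E}[\mathrm{diam}(M_i^n)^4\mid Z_i^n]\le C(Z_i^n)^2$ uniformly over $p\le c m^{-3/2}$, so
\[
\mathbb{E}\!\left[\sum_{i>k}(n^{-1/3}\mathrm{diam}(M_i^n))^4\,\Big|\,\mathbf{Z}^n\right] \le C\sum_{i>k}(n^{-2/3}Z_i^n)^2,
\]
and the right-hand side vanishes as $k\to\infty$ by $\ell^2_{\searrow}$ tightness of the rescaled sizes. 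An analogous tail estimate for the limit follows from the absolute continuity of $\tilde{e}^{(\sigma)}$ with respect to $e^{(\sigma)}$ and classical Brownian excursion tail bounds.

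The main obstacle will be establishing the Gaussian tail bound on $\|\tilde{X}^m\|/\sqrt{m}$ with constants that are genuinely uniform in the relevant parameters---in particular, uniformly over $p\le c m^{-3/2}$ as required to apply it simultaneously to all components of $G(n,p)$. The small components (those for which $n^{-2/3}Z_i^n\to 0$) require care: individually each contributes vanishing rescaled diameter, but it is their aggregate fourth-moment contribution that must be controlled, and this is again governed by the bound $\sum_{i>k}(n^{-2/3}Z_i^n)^2$, which is small by $\ell^2_{\searrow}$ tightness of $n^{-2/3}\mathbf{Z}^n$.
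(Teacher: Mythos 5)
Your overall architecture matches the paper's: Skorohod representation for the sizes, componentwise convergence from Theorem~\ref{thm:gh_connectedx} giving product-topology convergence, and then a uniform fourth-moment bound on the tails of the diameter sequence to upgrade to the metric (\ref{4metric}). However, there are two genuine gaps in your tail estimate. First, the diameter of a component is \emph{not} bounded by twice $\|\tilde X^m\|$: the depth-first walk records the stack size, not distances, and for a path one has $\|X^m\|=0$ while the diameter is $m-1$. What bounds the diameter (and the rooted Gromov--Hausdorff distance to a point) is twice the height of the depth-first tree, i.e.\ $\|\tilde H^m\|$, and the fourth-moment bound you need is on that quantity. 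The paper obtains it in Lemma~\ref{tiltheight} by conditioning on the area being small, applying {\L}uczak's tail bound for the height of a \emph{uniform} tree, and controlling the large-area event with Lemma~\ref{lem:conv_moments_area}; your Lemma~\ref{lem:heightbound} concerns the wrong process.

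Second, and more seriously, the claimed bound $\Ec{\diamc{M_i^{n}}^4 \mid Z_i^{n}}\le C(Z_i^{n})^2$ with a constant uniform ``over $p\le cm^{-3/2}$'' cannot be taken for granted, because $c$ is not uniform across components: for a component of size $m$ in $G(n,p)$ with $p\sim 1/n$, the relevant parameter is $c=m^{3/2}p\sim m^{3/2}/n$, which is unbounded over $m\le n$. The constant produced by Lemma~\ref{lem:conv_moments_area}, and hence by your Cauchy--Schwarz step, is of order $e^{\kappa m^{3}/n^{2}}=e^{\kappa x^{3}}$ for $m=xn^{2/3}$. You flag uniformity as the main obstacle but misdiagnose it as concerning the small components; those are harmless since for them $m^{3/2}/n\to 0$. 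The real issue is the upper tail of the size distribution, and the paper needs two inputs your sketch omits: a moment bound whose dependence on $m^{3/2}/n$ is polynomial (the factor $\max(m^{6}n^{-4},1)$ in Lemma~\ref{tiltheight}) rather than exponential, and the large-deviation estimate $\Prob{Z_1^{n}\ge xn^{2/3}}\le e^{-\gamma x^{3}}$ of Nachmias and Peres to sum the contribution of components larger than $Cn^{2/3}$. Without some such control, an exponential-in-$x^{3}$ constant competes head-on with the $e^{-\gamma x^{3}}$ size tail and the argument does not close.
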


In proving Theorem \ref{thm:4metric}, we need one additional result, on the expected height of the tilted trees $\tilde{T}_m^p$ introduced in Section~\ref{dfs}. 
This lemma is essentially what allows us to use the distance (\ref{4metric}), rather than product convergence. 
\begin{lem}\label{tiltheight}
Let $p = 1/n + \lambda n^{-4/3}$. There exists a universal constant $M>0$ such that for all $n$ large enough that $1/(2n) < p < 2/n$ and $p < 1/2$, and all $1 \leq m \leq n$, 
\[
\Ec{\|\tilde{T}_m^p\|^4} \leq M\cdot \max(m^6 n^{-4},1) \cdot m^2.
\]
\end{lem}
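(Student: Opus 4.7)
\medskip

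The plan is to transfer from the tilted tree $\tilde T_m^p$ to the uniform tree $T_m$ by change of measure, and then to couple the Gaussian tail bound of Lemma~\ref{lem:heightbound} with the exponential moment bound on $(1-p)^{-a(T_m)}$ of Lemma~\ref{lem:conv_moments_area}. Since $(1-p)^{-a(T)} \geq 1$, the normalizing denominator in the change of measure satisfies $\E{(1-p)^{-a(T_m)}} \geq 1$, so we get the pointwise tail bound
\[
\Prob{\|\tilde T_m^p\| \geq t} \leq \E{\I{\|T_m\| \geq t}\,(1-p)^{-a(T_m)}} \leq \Prob{\|T_m\| \geq t}^{1/2}\, \E{(1-p)^{-2a(T_m)}}^{1/2}
\]
by Cauchy-Schwarz. (If $\|\cdot\|$ is interpreted as the tree height rather than the sup of the depth-first walk, pass through Theorem~\ref{thm:MaMobound} at negligible cost.)

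Next I would plug in the two estimates. Lemma~\ref{lem:heightbound} gives $\Prob{\|T_m\| \geq t} \leq C e^{-\alpha t^2/m}$, while Lemma~\ref{lem:conv_moments_area} applied with $\xi = 2$ and $c = 2 m^{3/2}/n$ (valid since $p \leq 2/n$) gives $\E{(1-p)^{-2a(T_m)}} \leq K e^{16\kappa m^3/n^2}$. Combining these,
\[
\Prob{\|\tilde T_m^p\| \geq t} \leq C_1 \exp\!\left(-\frac{\alpha t^2}{2m} + 8\kappa\, \frac{m^3}{n^2}\right).
\]

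The last step is to integrate the tail. Writing $\E{\|\tilde T_m^p\|^4} = \int_0^m 4t^3 \Prob{\|\tilde T_m^p\| \geq t}\, dt$, split the integral at a threshold $t_0 = C_2 m^2/n$, with $C_2$ chosen large enough that $\alpha t_0^2/(4m) \geq 8\kappa m^3/n^2$, i.e., $C_2^2 \geq 32\kappa/\alpha$. On $[0, t_0]$ use only the trivial bound $\Prob{\cdot} \leq 1$, contributing at most $t_0^4 = C_2^4\, m^8/n^4$. On $[t_0, m]$ the choice of $C_2$ ensures that $-\alpha t^2/(2m) + 8\kappa m^3/n^2 \leq -\alpha t^2/(4m)$, so the integral is dominated by $\int_0^\infty 4 C_1 t^3 e^{-\alpha t^2/(4m)}\, dt = O(m^2)$. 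Adding the two contributions gives $\E{\|\tilde T_m^p\|^4} = O(m^8/n^4 + m^2) = O(\max(m^6/n^4, 1)\cdot m^2)$, which is the bound claimed.

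The main obstacle is the correct choice of the split point $t_0$: the tilting factor $e^{8\kappa m^3/n^2}$ is of constant order precisely when $m \asymp n^{2/3}$, and becomes genuinely large beyond this regime, which is exactly the range in which the announced bound degrades from $m^2$ to $m^8/n^4$. The balance $t_0 \sim m^2/n$ is the scale at which the Gaussian decay rate $\alpha t^2/m$ matches the tilting exponent $m^3/n^2$, and forcing this balance gives exactly the two-regime behavior in the conclusion. Everything else is routine manipulation of Gaussian integrals and the Cauchy-Schwarz decomposition.
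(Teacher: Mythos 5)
Your overall strategy is sound and genuinely different from the paper's. The paper decomposes according to whether $a(\tilde T_m^p)$ exceeds a threshold $\alpha x^2 m^{3/2}$ that scales with the height level being tested, bounds the Radon--Nikodym factor pointwise by $(1-p)^{-\alpha x^2 m^{3/2}}$ on the small-area event, and applies Markov's inequality with the exponential moment on the large-area event. You instead apply a single global Cauchy--Schwarz to decouple the height indicator from the weight $(1-p)^{-a(T_m)}$, and recover the two-regime behavior only at the integration stage by splitting at $t_0$ of order $m^2/n$. Both arguments rest on the same balance between Gaussian decay of the height and the exponential moment of the area from Lemma~\ref{lem:conv_moments_area}; your version is arguably cleaner. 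The first display (the denominator of the change of measure is at least $1$), the application of Lemma~\ref{lem:conv_moments_area} with $\xi=2$ and $c=2m^{3/2}/n$, and the choice $C_2^2\ge 32\kappa/\alpha$ in the split of the integral are all correct, and the final bookkeeping $m^8n^{-4}+m^2 = O(\max(m^6n^{-4},1)\cdot m^2)$ is right.

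The one genuine gap is the parenthetical claim that the height tail follows from Lemma~\ref{lem:heightbound} ``through Theorem~\ref{thm:MaMobound} at negligible cost.'' The quantity $\|\tilde T_m^p\|$ in the lemma is the height of the tree, and Lemma~\ref{lem:heightbound} controls the depth-first walk, not the height. To pass through Theorem~\ref{thm:MaMobound} you must take $\nu<1/4$ (so that the correction $m^{1/4+\nu}$ is small compared to $m^{1/2}$), and this introduces an additive error $e^{-\gamma m^{\nu}}$ in the tail bound that does not decay in $t$. After Cauchy--Schwarz this error is multiplied by $K^{1/2}e^{8\kappa m^3/n^2}$, so the resulting contribution to the fourth moment is of order $m^4 \exp(8\kappa m^3/n^2-\gamma m^{\nu}/2)$, which exceeds the claimed bound $\max(m^8n^{-4},m^2)$ as soon as $m^{3-\nu}$ is large compared to $n^2$, i.e.\ for all $m$ somewhat above $n^{8/11}$. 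The trivial bound $\|\tilde T_m^p\|^4\le m^4$ does not rescue this range either, since $m^4\le m^8n^{-4}$ only when $m\ge n$, and the lemma is used in the proof of Theorem~\ref{thm:4metric} for all $m$ up to $n$. The fix is to invoke a sub-Gaussian tail bound for the height of a uniform labeled tree directly --- the paper uses {\L}uczak's bound $\Prob{\|T_m\|\ge x\sqrt{m}}\le K'x^3e^{-x^2/2}$ --- after which your argument goes through verbatim.
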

Before we proceed with the proof, note that the bound in Lemma~\ref{tiltheight} tells us that tilted trees of size of order $n^{2/3}$ behave more or less like uniform trees.  
(See the moments of the height $\|T^m\|$ of uniform trees $T^m$ in \cite{ReSz1967,FlOd1982,FlGaOdRi1993}.) Trees of size much larger than $n^{2/3}$ are much more influenced by the tilting (as witnessed by the factor $m^6n^{-4}$). 
\begin{proof}
We assume throughout that $m \geq 2$. For any $x > 0$ and $\alpha > 0$, we have 
\begin{equation}\label{firstwrite}
\pc{\|\tilde{T}_m^p\| > xm^{1/2}} \leq \Cprobc{\|\tilde{T}_m^p\|> xm^{1/2}}{a(\tilde{T}_m^p) \leq \alpha x^2m^{3/2}} + \pc{a(\tilde{T}_m^p) > \alpha x^2m^{3/2}}.
\end{equation}
We will bound each of the terms on the right-hand side of (\ref{firstwrite}) then integrate over $x$ to obtain the desired bound on $\Ec{\|\tilde{T}_m^p\|^4}$. (We will optimize our choice of 
$\alpha$ later in the proof.) The intuition is that when $a(\tilde{T}_m^p)$ is not too large, the distribution of $\tilde{T}_m^p$ is not too different from that of the uniformly random labeled rooted tree $T_m$, 
and so we should be able to use pre-existing bounds on the tails of $\|T_m\|$. On the other hand, we have already proved (c.f.~Lemma \ref{lem:conv_moments_area}) bounds that 
will allow us to control the probability that $a(\tilde{T}_m^p)$ is large. We now turn to the details. 

Let $q = \max(m^{-3/2},p)$. By Markov's inequality and the definition of $\tilde{T}_m^p$ we have 
\begin{align}
\pc{a(\tilde{T}_m^p) > \alpha x^2m^{3/2}} 	
& \leq \frac{\Ec{(1-q)^{-a(\tilde{T}_m^p)}}}{(1-q)^{-\alpha x^2m^{3/2}}} \nonumber\\
& \leq \frac{\E{((1-p)(1-q))^{-a(T_m)}}}{(1-q)^{-\alpha x^2m^{3/2}}} \nonumber\\
& \leq \frac{\E{(1-q)^{-2a(T_m)}}}{(1-q)^{-\alpha x^2m^{3/2}}}. \label{mi_bigarea}
\end{align}
Let $c=2m^{3/2}/n$, so that $cm^{-3/2}/4 < p < cm^{-3/2}$, and observe that $q \leq \delta m^{-3/2}$ for $\delta:=\max(c,1)$. By Lemma~\ref{lem:conv_moments_area}, there exist absolute constants $K, \kappa>0$ such that 
\[
\sup_{m \geq 1} \Ec{(1-q)^{-2a(T_m)}} \leq K e^{4\kappa\delta^2}.
\]
Furthermore, since $qm^{3/2} \geq \delta/4$, (\ref{mi_bigarea}) yields 
\begin{equation}\label{firstpart}
\pc{a(\tilde{T}_m^p) > \alpha x^2m^{3/2}} \leq K e^{4\kappa\delta^2 - \alpha x^2 \delta/4} \leq K e^{-\alpha x^2 \delta/8},
\end{equation}
for all $x$ such that $x^2\geq 32\kappa \delta/\alpha$. For $x \geq \sqrt{8\ln (2K)/(\alpha \delta)}$, so that $e^{-\alpha x^2\delta/8} \leq 1/2$, since 
$p < cm^{-3/2}$ and $(1-p)^{-1/p} <e^2$, we also have 
\begin{align}
\Cprobc{\|\tilde{T}_m^p\|\geq xm^{1/2}}{a(\tilde{T}_m^p) \leq \alpha x^2m^{3/2}} 	
& \leq \frac{(1-p)^{-\alpha x^2 m^{3/2}}\cdot \pc{\|T_m\| \geq xm^{1/2}}}{\pc{a(\tilde{T}_m^p) \leq \alpha x^2m^{3/2}}} \nonumber\\
& \leq 2e^{2c\alpha x^2}\cdot \pc{\|T_m\| \geq xm^{1/2}}. \label{forluczak}
\end{align}
We can now use tail bounds on the height of uniform labeled trees. {\L}uczak \cite[][Corollary~1]{Luczak1995} provides a uniform tail bound on $\|T_m\|$: for some universal constant $K'$, and all integers $m\ge 1$,
\[ 
\pc{\|T_m\|\ge x m^{1/2}} \le K' x^3 e^{-x^2/2}, 
\]
and so taking $\alpha^{-1} = 8\delta$, (\ref{forluczak}) yields
\begin{equation}\label{secondpart}
\Cprobc{\|\tilde{T}_m^p\|\geq xm^{1/2}}{a(\tilde{T}_m^p) \leq \alpha x^2m^{3/2}} \leq 2K'x^3e^{(2c\alpha - 1/2)x^2} \leq 2K'x^3 e^{-x^2/4}. 
\end{equation}
Notice that our requirements that $x^2 \geq 32\kappa\delta/\alpha$ and $x^2 \geq 8\ln (2K)/(\alpha \delta)$ now reduce to 
$x \geq 16\kappa^{1/2}\delta$ and $x \geq 8\sqrt{\ln (2K)}$. So, in particular, setting $L = 16\kappa^{1/2}+8 \sqrt{\ln (2K)}$, and recalling that $\delta=\max(c,1)$, it suffices to require 
that $x \geq L \delta$. 

For all such $x$, combining (\ref{firstpart}) and (\ref{secondpart}) with (\ref{firstwrite}) and substituting in the value of $\alpha$ then yields 
\[
\pc{\|\tilde{T}_m^p\| > xm^{1/2}} \leq e^{-x^2/64} + 2K'x^3 e^{-x^2/4} \leq \max(4K'x^3,2)e^{-x^2/64}. 
\]
Writing $\E{X}=\int_0^{\infty} \p{X\ge t} dt$, we then have 
\begin{align*}
\Ec{\|\tilde{T}_m^p\|^4}  \leq m^2 \int_0^{m^2} \pc{\|\tilde{T}_m^p\|^4 > xm^2}dx 
\leq m^2 L^4\delta^4 + m^2 \int_{L^4\delta^4}^{m^2} \max(4K'x^{3/4},2)e^{-x^{1/2}/64} dx.
\end{align*}
So, since $\delta=\max(c,1)$ with $c=2m^{3/2}/n$, we have $\Ec{\|\tilde{T}_m^p\|^4}\leq m^2 M (\max(m^{3/2}/n,1))^4$, for some absolute constant $M > 0$, as required. 
\end{proof}

\begin{proof}[Proof of Theorem \ref{thm:4metric}]
	In the random graph $G(n,p)$, conditional on $\mathbf{Z}^{n}$, the components $M_1^{n}, M_2^{n}, \ldots$ are independent and
	\[
	M_i^{n} \equidist G_{Z_i^{n}}^{p}
	\]
where as above, $p= n^{-1} + \lambda n^{-4/3}$.  Note that $n p \to 1$ as $n \to \infty$.  
By (\ref{eq:aldous_sizes}) and Skorohod's representation theorem, there exists a probability space and random variables
$\tilde{\mathbf{Z}}^{n}, \tilde{\mathbf{M}}^{n}, n \geq 1$ and $\tilde{\mathbf{Z}}, \tilde{\mathbf{M}}$ defined on that space such that $(\tilde{\mathbf{Z}}^{n},  \tilde{\mathbf{M}}^{n} ) \equidist (\mathbf{Z}^{n},  \mathbf{M}^{n})$,  $n\ge 1$, and $(\tilde{\mathbf{Z}}, \tilde{\mathbf{M}}) \equidist (\mathbf{Z}, \mathbf{M})$ with $n^{-2/3} \tilde{\mathbf{Z}}^{n} \to \tilde{\mathbf{Z}}$ a.s.  But then the convergence $(n^{-2/3} \mathbf{Z}^{n}, n^{-1/3} \mathbf{M}^{n}) \convdist (\mathbf{Z}, \mathbf{M})$ in the product topology follows immediately from Theorem~\ref{thm:gh_connectedx}.  
We can, and will hereafter assume, again by applying Skorohod's theorem, that $(n^{-2/3}Z_i^{n},n^{-1/3}M_i^{n}) \to (Z_i,M_i)$ almost surely for all $i$. 
	It remains to prove convergence in distribution in the metric specified by (\ref{4metric}). In doing so, we will need to use the $\oDFS$ procedure. For any $n$ and $i$ for which $M_i^{n}$ is defined, 
	we may view $M_i^{n}$ as a finite connected graph; this graph is uniquely specified (up to isomorphism) by $M_i^{n}$. When we write $\oDFS(M_i^{n})$ we mean the $\oDFS$ procedure run on a uniformly random labelling of the graph corresponding to $M_i^{n}$. 

To prove convergence in the metric specified by (\ref{4metric}), we first observe that for any sequences of metric spaces $\mathbf{A},\mathbf{B}$ and any integer $N \geq 1$, we have 
	\[
	d(\mathbf{A},\mathbf{B}) \leq \pran{\sum_{i=1}^{N-1} \dgh(A_i,B_i)^4}^{1/4} + \pran{\sum_{i=N}^{\infty} \dgh(A_i,B_i)^4}^{1/4}.
	\]
	Since we have already established convergence in the product topology, to complete the proof it thus suffices to show that for all $\epsilon > 0$, 
	\begin{equation}\label{4metric2show}
	\lim_{N \rightarrow \infty} \limsup_{n \rightarrow \infty} \p{\sum_{i=N}^{\infty} \dgh(n^{-1/3}M_i^{n},M_i)^4 >\eps} = 0.
	\end{equation}
	As earlier, we write $\|\cdot\|$ for the height of a rooted tree or the supremum of a finite excursion. For any $i$ and $n$, 
	we may bound use the bound 
	\begin{equation}\label{normbound}
	\dgh(n^{-1/3}M_i^{n}, M_i)^4 \le 16(n^{-4/3}\|\tilde T_i^{n}\|^4 + \|\tilde e^{(Z_i)}\|^4),
	\end{equation}
	where $\tilde T_i^{n}$ is the depth-first tree corresponding to \oDFS$(M_i^{n})$ started at its smallest vertex, and $\tilde e^{(Z_i)}$ is the excursion corresponding to $M_i$. 
	Now let 
	\[
	\Xi_i^{n} = \|\tilde T_i^{n}\|^4 \cdot (Z^{n}_i)^{-2}.
	\]
	By Brownian scaling, given the length $Z_i$, we have that $\|\tilde e^{(Z_i)}\|^4=Z_i^2 \cdot \|\tilde{e}_i\|^4$, where $\{\tilde{e}_i, i\ge 1\}$ are independent and identically distributed copies of $\tilde e$,
	a tilted excursion of length one, and which are independent of $\{Z_i, i\ge 1\}$. Combining the preceding equalities with (\ref{normbound}), 
	we thus have 
	\[
	\sum_{i=N}^{\infty} \dgh\pran{n^{-1/3} M_i^{n}, M_i}^4 \leq 16 \sum_{i=N}^{\infty} \pran{\Xi_i^{n} \pran{n^{-2/3} Z^{n}_i }^2 + Z_i^2\|\tilde{e}_i\|^4}.
	\]
	Next, given $\delta>0$ write $N_{\delta} = N_{\delta}(\mathbf{Z})$ for the smallest $N$ such that $Z_N < \delta$; 
	$N$ is almost surely finite since $\mathbf{Z}$ is almost surely an element of $\ell^2_{\searrow}$. 
	For any $\delta > 0$ and all $n,N$, setting $\epsilon_1=\epsilon/16$ we then have 
	\[
	\p{\sum_{i=N}^{\infty} \dgh(n^{-1/3}M_i^{n}, M_i)^4>\eps} \leq \p{ \sum_{i>N_{\delta}} \pran{\Xi_i^{n} \pran{n^{-2/3}Z^{n}_i}^2 + Z_i^2\|\tilde{e}_i\|^4} > \epsilon_1} + \p{N_{\delta}>N}.
	\]
	Since $\lim_{N \rightarrow \infty} \p{N_{\delta}>N}=0$, and the first probability on the right-hand side of the preceding inequality does not depend on $N$, we thus have 
	\[
	\lim_{N \rightarrow \infty} \limsup_{n \rightarrow \infty} \p{\sum_{i=N}^{\infty} \dgh\pran{n^{-1/3}M_i^{n}, M_i}^4>\epsilon} \leq 
	\limsup_{n \rightarrow \infty} \p{\sum_{i>N_{\delta}} \pran{ \Xi_i^{n} n^{-4/3}(Z^{n}_i)^2 + Z_i^2\|\tilde{e}_i\|^4}>\epsilon_1}.
	\]
	Since this holds for any $\delta>0$ and the left-hand side does not depend on $\delta$, we then obtain 
	\begin{equation}\label{2partsbound}
	\lim_{N \rightarrow \infty} \limsup_{n \rightarrow \infty} \p{\sum_{i=N}^{\infty} \dgh\pran{n^{-1/3}M_i^{n},M_i}^4>\epsilon} \leq 
	\lim_{\delta \downarrow 0}\limsup_{n \rightarrow \infty} \p{\sum_{i>N_{\delta}} \pran{ \Xi_i^{n} \left(\frac{Z^{n}_i}{n^{2/3}} \right)^2 + Z_i^2\|\tilde{e}_i\|^4}>\epsilon_1}.
	\end{equation}

	But it follows from Corollary 2 and Lemma 14 (b) of \cite{aldous97brownian} that for all $\gamma > 0$, 
	\[
	\lim_{\delta \downarrow 0}\limsup_{n \rightarrow \infty} \p{\sum_{i>N_{\delta}} \pran{\frac{Z^{n}_i}{n^{2/3}}}^2 > \gamma} = 0, \qquad\mbox{and}\qquad
	\lim_{\delta \downarrow 0} \p{\sum_{i>N_{\delta}} Z_i^2 > \gamma} = 0, 
	\]
	from which we may complete the proof straightforwardly. 
	First recall that $\tilde{e}_i$, $i\ge 1$ are independent and identically distributed tilted excursions of length one.  Moreover, $\E{\|\tilde e_i\|^4}<\infty$, using the change of measure in the definition of $\tilde e_i$ and the Gaussian tails for the maximum
	 $\|e\|$ of a standard Brownian excursion $e$ \cite{kennedy1976}.
	Let $\epsilon_2=\epsilon_1/2$, and choose $\delta > 0$ small enough that 
	\[
	\p{\sum_{i>N_{\delta}} Z_i^2 \geq \frac{\epsilon_2^2}{2\E{\|\tilde{e}\|^4}}} \leq \frac{\epsilon_2}2.
	\]
	Then, by Markov's inequality, we have 
	\begin{align*}
	\p{\sum_{i>N_{\delta}} Z_i^2\|\tilde{e}_i\|^4 \geq \epsilon_2} 
	& \leq \frac{\epsilon_2}{2} 
	+ \Cprob{\sum_{i>N_{\delta}} Z_i^2\|\tilde{e}_{i}\|^4 \geq \epsilon_2}
	{\sum_{i>N_{\delta}} Z_i^2 < \frac{\epsilon_2^2}{2\E{\|\tilde{e}\|^4}}} \\
	& \leq \frac{\epsilon_2}{2} 
	+ \frac{1}{\epsilon_2} \ExpC{\sum_{i>N_{\delta}} Z_i^2\|\tilde{e}_i\|^4}{ \sum_{i>N_{\delta}} Z_i^2 < \frac{\epsilon_2^2}{2\E{\|\tilde{e}\|^4}}} \le \epsilon_2,
	\end{align*}
	since $\{\tilde{e}_i, i\ge 1\}$ is independent of the set of lengths $\{Z_i, i\ge 1\}$.
	Since $\epsilon=32 \epsilon_2$ was arbitrary, it follows that
	\[
	\lim_{\delta \downarrow 0}  \p{\sum_{i>N_{\delta}} \|\tilde{e}_i\|^4Z_i^2 \geq \epsilon_2} = 0.
	\]
	We may apply an identical argument to bound the terms involving discrete random variables and show that 
	\[
	\lim_{\delta \downarrow 0} \limsup_{n \rightarrow \infty} \p{ \sum_{i>N_{\delta}} \Xi_i^{n} \pran{Z^{n}_i n^{-2/3}}^2 > \epsilon_2} =0,
	\]
	and thereby complete the proof, as long as we can show that $\sup_{i,n} \mathbb{E} [\Xi_i^{n}]<\infty$.  We now prove that this is true.
	Recall that, by definition, $\Xi_i^{n} = \|\tilde T_i^{n}\|^4/(Z^{n}_i)^2$. 
	Recall also that for a given $n$ and $i$, conditional on $Z^{n}_i=m$, the tree
	$\tilde{T}_i^{n}$ is distributed as $\tilde{T}_m^p$ from Section \ref{dfs}. In other words, we have 
	\[
	\pc{\tilde{T}_i^{n} = T} \propto (1-p)^{-a(T)},
	\]
	for each tree $T$ on $[m]$, where by $\tilde{T}_i^{n} = T$ we mean that the increasing map (i.e.\ respecting the increasing order of the labels) between vertices of $\tilde{T}_i^{n}$ and $T$ induces an isomorphism. 

	To bound $\mathbb{E}[\Xi_i^{n}]$, we use Lemma \ref{tiltheight}, together with bounds on the size of the largest component of 
	$G(n,p)$ for $p=1/n+\lambda n^{-4/3}$. For our purposes, the latter bounds are most usefully stated by \citet{nachmias08crit} (see also \cite{pittel01largest,scott06solving}). They proved that for all fixed $\lambda \in \R$, 
	there exist $\gamma=\gamma(\lambda)>0$ and $C=C(\lambda)>1$ such that for all $n$ and for all $x \geq C$, 
	\[
	\pc{Z^{n}_1 \geq xn^{2/3}} \leq e^{-\gamma x^3}.
	\]
	(In fact, the bound in \cite{nachmias08crit} is slightly stronger than this.) For all integer $i\ge 1$, we thus have 
	\begin{align*}
	\mathbb{E}[\Xi_i^{n}]	& \leq \sup_{m \leq C n^{2/3}} m^{-2}\CExpc{\|\tilde T_i^{n}\|^4}{Z_i^{n}=m} + \sum_{m = \lceil Cn^{2/3}\rceil}^n m^{-2} \CExpc{\|\tilde T_i^{n}\|^4 }{Z_i^{n}=m}\cdot \pc{Z_i^{n}=m}\\
				&\leq \sup_{m \leq C n^{2/3}} m^{-2} \Ec{\|\tilde T_m^p\|^4}
						+ \sum_{m = \lceil Cn^{2/3}\rceil}^n m^{-2} \Ec{\|\tilde{T}_m^p\|^4}\cdot \pc{Z_1^{n}\geq m}\\
				&\leq \sup_{m \leq C n^{2/3}} m^{-2} \Ec{\|\tilde{T}_m^p\|^4} 
						+ \sum_{m = \lceil Cn^{2/3}\rceil}^n m^{-2}\Ec{\|\tilde{T}_m^p\|^4}\cdot e^{-\gamma m^3 n^{-2}}.
	\end{align*}
	Applyling Lemma \ref{tiltheight} to the last expression, we immediately obtain 
	\begin{align*}
	\mathbb{E}[\Xi_i^{n}]	& \leq M C^{6} + \sum_{m = \lceil Cn^{2/3}\rceil}^n M\cdot m^{6}n^{-4} \cdot e^{-\gamma m^3n^{-2}}
	\end{align*}
	which is uniformly bounded in both $i\ge 1$ and $n\ge 1$, as required.
\end{proof}

We finally turn to the diameter of the random graph $G(n,p)$.  Note that, in order to prove convergence in the metric   (\ref{4metric}), we in fact proved that for all $\epsilon > 0$, 
\begin{equation*}
\lim_{N \rightarrow \infty} \limsup_{n \rightarrow \infty}\p{\sum_{i\ge N} n^{-4/3}\|\tilde T_i^{n}\|^4 \geq \epsilon} = 0 \quad \mbox{ and } \quad \lim_{N \rightarrow \infty} \p{\sum_{i\ge N}\|\tilde e^{(Z_i)}\|^4\ge \epsilon} = 0. 
\end{equation*}
Since $\diamc{M_i^{n}} \leq 2\|\tilde T_i^{n}\|$ and $\diamc{M_i} \leq 2\|\tilde e^{(Z_i)}\|$, we must thus have 
\begin{equation}\label{4met:strong} 
\lim_{N \rightarrow \infty} \limsup_{n \rightarrow \infty}\p{\sup_{i \geq N}n^{-4/3}\diamc{M_i^{n}}^4 > \epsilon} = 0 
\quad \mbox{ and } \quad 
\lim_{N \rightarrow \infty}\p{\sup_{i \geq N}\diamc{M_i}^4 > \epsilon} = 0, 
\end{equation} 
which allows us to prove the   convergence of the diameters. 
For $i=1,2,\ldots$ let $D^{n}_i = \diamc{M_i^{n}}$ if $\mathbf{M}^{n}$ has 
at least $i$ components, and $D^{n}_i=0$ otherwise, and let  
\[
D^{n} = \max_{i\ge 1}\left\{\diamc{M_i^{n}}\right\}
\] 
denote the diameter of $G(n,p)$. We remark that Aldous discusses the diameter of continuum metric spaces and the Brownian CRT in particular \cite[][Section 3.4]{aldous91crt2}. We can now prove Theorem~\ref{thm:diam_gnp} stated in the introduction, which claims that $n^{-1/3} D^{n}\convdist D$, for a random variable $D\ge 0$ with finite mean and absolutely continuous distribution. 

\begin{proof}[Proof of Theorem~\ref{thm:diam_gnp}]
For each $i=1,2,\ldots$ let $D_i = \diam{M_i}$, and let $D = \sup\{\diam{M_i}:i \geq 1\}$. 
Observe that for any two metric spaces $M$ and $N$, 
$|\diam{M}-\diam{N}| \leq 2\dgh(M,N)$. For fixed $i$, the claimed convergence is immediate from Theorem \ref{thm:4metric} since 
$$|n^{-1/3}\diamc{M_i^{n}}-\diam{M_i}| \leq 2\dgh(n^{-1/3}M_i^{n},M_i)$$
and $n^{-1/3}M_i^{n} \to M_i$ in distribution in the Gromov--Hausdorff distance. Also $\diam{M_i}$ is a non-negative random variable with finite mean since $\diam{M_i}$ is at most the diameter of the underlying continuum random tree: $\diam{M_i}\le 2 \|\tilde e_i\|\sqrt{Z_i} $, where $\tilde e_i$ is a tilted excursion of length one.  
It follows immediately that for any fixed $N$, 
\begin{equation}\label{truncation}
n^{-1/3} \max\{D^{n}_i, 1\le i\le N\} \convdist \max\{D_i, 1\le i\le N\}.
\end{equation}
Next, observe that for any $\epsilon > 0$, there exists $c > 0$ such that $\p{D < c} \le \p{D_1 < c} < \epsilon/2$. Since $n^{-1/3} D_1^{n}\convdist D_1$, we must then have that, for all $n$ large enough, 
$$\pc{n^{-1/3} D^{n} < c} \le \pc{n^{-1/3} D_1^{n} < c} < \epsilon/2.$$
Now fix any $\epsilon > 0$ and take $N$ large enough that for all $n$ sufficiently large,
\[
\p{\sup_{i \geq N} n^{-1/3} \diamc{M_i^{n}} \geq c} \leq \epsilon/2 \quad \mbox{ and } \quad \p{\sup_{i \geq N}\diam{M_i} \geq c} \leq \epsilon/2;
\]
such an $N$ must   exist by (\ref{4met:strong}). 
Then for all $n$ sufficiently large, by the preceding equations
\[
\p{D^{n} \neq \max_{1\leq i \leq N} D^{n}_i} \leq \epsilon \qquad \mbox{and}\qquad \p{D \neq \max_{1 \leq i \leq N} D_i} \leq \epsilon.
\]
Since $\epsilon > 0$ was arbitrary, combining these inequalities with (\ref{truncation}) yields that $n^{-1/3} D^{n} \convdist D$, as required.  It follows straightforwardly from the behavior of the tail of the sequence $(D_i, i \ge 1)$ that $D$ has an absolutely continuous distribution.
Finally, the fact that $\E{D} < \infty$ is a direct consequence of \cite{addario08diam}, Theorem 1.
\end{proof}


\section{Concluding remarks}

In this paper we have proved that it is possible to define a scaling limit for critical random graphs using random continuum metric spaces. This gives us a systematic way to consider a great many questions about distances in critical random graphs. In particular, it allows us to prove that critical random graphs have a diameter of order $n^{1/3}$ which is not concentrated around its mean. Focussing just on the diameter, there are now several questions which might deserve another look: what is the probability that the largest component achieves the diameter?  What is the distribution of the (random) point $\lambda \in \R$ (where $p=1/n+\lambda n^{-4/3}$) at which the diameter of the random graph maximized? What is the distribution of this diameter? 

The proof of our main result relies on a careful analysis of a depth-first exploration process of the graph which yields a ``canonical'' spanning forest and a way to add surplus edges according to the appropriate distribution. The forest is made of non-uniform trees that are biased in favor of those with a large area. 
In the limit, these trees rescale to continuum random trees encoded by tilted Brownian excursions. We have limited our analysis of these excursions to a minimum, but it seems likely that much more can be said, which might in turn yield results for the structure of the graphs or the behavior of other graph exploration~algorithms.

In this paper, we have very much relied upon the depth-first viewpoint.  Gr\'egory Miermont~\cite{Miermont} has suggested that, at least intuitively, there should be an analogous breadth-first approach to the study of a limiting component, in which one might think of the shortcuts as being made ``horizontally'' across a generation rather than ``vertically'' along paths to the root.  The advantage of the depth-first walk is that it converges to the same excursion as the height process of the depth-first tree.  The rescaled breadth-first walk, however, converges to the same limit as the rescaled \emph{height profile} (i.e.\ the number of vertices at each height) of a ``breadth-first tree'', which contains less information and, in particular, does not code the structure of that tree.  As a result, it seems that it would be much harder to derive a metric space construction of a limiting component using the breadth-first viewpoint.  It may, nonetheless, be the case that the breadth-first perspective is better adapted to answering certain questions about the limiting components where only the profile matters.

In a companion paper \cite{AdBrGo2009b}, we will describe an alternative construction of the limit object which has the cycle structure of connected components at its heart: a connected component may be described as a multigraph (which gives the cycles), onto which trees are pasted. Together with the results of this paper, the latter perspective yields many limiting distributional results about sizes and lengths in critical random graphs.

\small
\setlength{\bibsep}{.3em}
\bibliographystyle{plainnat}
\bibliography{bib_clcrg,bib_l,bib_c,bib_n}

\end{document}